\newtheorem{theorem}{Theorem}[section]
\newtheorem{fact}[theorem]{Fact}
\newtheorem{proposition}[theorem]{Proposition}
\newtheorem{corollary}[theorem]{Corollary}
\newtheorem{lemma}[theorem]{Lemma}
\theoremstyle{definition}
\newtheorem{definition}[theorem]{Definition}
\newtheorem{remark}[theorem]{Remark}
\newtheorem{example}[theorem]{Example}
\newtheorem{conjecture/question}[theorem]{Conjecture/Question}
\newtheorem{remark/definition}[theorem]{Remark/Definition}
\newtheorem{terminology/notation}[theorem]{Terminology/Notation}
\newtheorem{assumption}[theorem]{Assumption}
\newcommand{\marginlabel}[1]%
  {\mbox{}\marginpar{\raggedleft\hspace{0pt}\bfseries\sf#1}}
\DeclareMathOperator{\defo}{Def}
\DeclareMathOperator{\aut}{Aut}
\DeclareMathOperator{\diag}{Diag}
\DeclareMathOperator{\ord}{ord}
\DeclareMathOperator{\im}{im}
\DeclareMathOperator{\id}{id}
\def\pmmu{{\pmb \mu}}
\newcommand{\ram}{\operatorname{ram}}
\newcommand{\etale}{\text{{\'e}t}}
\newcommand{\reg}{\operatorname{reg}}
\newcommand{\Sing}{\operatorname{Sing}}
\newcommand{\looop}{\operatorname{loop}}
\newcommand{\stable}{\operatorname{stable}}
\newcommand{\length}{\operatorname{length}}
\newcommand{\GL}{\operatorname{GL}}
\newcommand{\modulo}{\operatorname{mod}}
\newcommand{\val}{\operatorname{val}}
\newcommand{\lcm}{\operatorname{lcm}}
\newcommand{\Spec}{\operatorname{Spec}}
\newcommand{\age}{\operatorname{{age}}}
\newcommand{\Pic}{\operatorname{Pic}}
\newcommand{\nor}{\operatorname{nor}}
\def\ol{\overline}
\def\wt{\widetilde}
\def\ul{\underline}
\def\mbb{\mathbb}
\def\al{\alpha}
\def\ssM{\mathsf{M}}
\def\ssR{\mathsf{R}}
\def\ssB{\mathsf{B}}
\def\ssf{\mathsf{f}}
\def\ssC{\mathsf{C}}
\def\ssL{\mathsf{L}}
\def\ssphi{\mathsf{\phi}}
\def\cO{\mathcal{O}}
\newcommand{\sta}{\mathsf}
\newcommand{\iso}{\sta{s}}
\newcommand{\coaiso}{{s}}
\def\ZZ{{\mathbb Z}}
\def\GG{{\textbf G}}
\def\QQ{{\mathbb Q}}
\def\CC{{\mathbb C}}
\def\PP{{\mathbb P}}
\def\cM{\mathcal{M}}
\def\cR{\mathcal{R}}
\def\rr{\ol{\mathcal{R}}}
\def\mm{\ol{\mathcal{M}}}
\def\qr{\langle\mathrm{QR}\rangle}
\def\lqr{\mathrm{QR}}
\newcommand{\level}{(\leveli,\levelii,\leveliii)}
\newcommand{\leveli}{\ssC}
\newcommand{\levelii}{\ssL}
\newcommand{\leveliii}{\ssphi}
\newcommand{\rsbt}{Reid--Shepherd-Barron--Tai}
\newcounter{mylist}
{\begin{list}%
    {(\roman{mylist})}%
    {\usecounter{mylist}%
     \setlength{\rightmargin}{0pt}%
     \setlength{\leftmargin}{0pt}%
     \setlength{\itemindent}{0.5em}%
     \setlength{\itemsep}{0pt}%
     \setlength{\parsep}{0ex plus0.1ex}
     \setlength{\topsep}{0ex}}}%
{\end{list}}
\begin{document}
\title{Singularities of the moduli space of level curves}

\author[A.~Chiodo]{Alessandro Chiodo}
\address{Universit\'e Pierre et Marie Curie, Institut de math\'ematiques de Jussieu, 75252 Paris}
\email{{\tt
chiodoa@math.jussieu.fr}}

\author[G.~Farkas]{Gavril Farkas}

\address{Humboldt Universit\"at zu Berlin, Institut F\"ur Mathematik,
10099 Berlin}
\email{{\tt farkas@math.hu-berlin.de}}
\thanks{Research of the first author partially supported by the ANR grant Th\'eorieGW. \\
\indent Research of
the second author partially supported by the Sonderforschungsbereich 647 Raum-Zeit-Materie.}

\markboth{A.~CHIODO and G.~FARKAS}
{Singularities of $\rr_{g,\ell}$}

\begin{abstract}
We describe the singular locus of the 
compactification of the moduli space $\mathcal{R}_{g,\ell}$ of 
curves of genus $g$ paired with an $\ell$-torsion point 
in their Jacobian. Generalising previous work for $\ell\le 2$, we also
describe 
the sublocus 
of noncanonical singularities for any positive integer $\ell$. 
For $g\ge 4$ and $\ell=3,4, 6$, this allows us 
to provide a lifting result on pluricanonical forms 
playing an essential role in the computation of the Kodaira dimension of 
$\mathcal{R}_{g,\ell}$: for those values of $\ell$, every pluricanonical form on the smooth 
locus of the moduli space extends to a desingularisation 
of the compactified moduli space. 
\end{abstract}

\maketitle


The modular curve
$X_1(\ell):=\mathcal{H}/\Gamma_1(\ell)$ classifying
elliptic curves together with an $\ell$-torsion point 
in their Jacobian is
among the most studied objects in arithmetic geometry.
In a series of recent papers,
the birational geometry of its higher genus generalisations
and their variants (\emph{e.g.} theta characteristics)
has been systematically studied and proved 
to be, in many cases such as $\ell=2$,
better understandable than that of the
underlying moduli space of curves $\mathcal M_g$.
As an example, we refer to the complete
computation of the Kodaira dimension of all
components of the moduli of theta 
characteristics ($L^{\otimes 2}\cong \omega$), see
\cite{lu2007, Faspin, FV1, FV2}.

In this paper, for $g\ge 2$ and for all positive levels $\ell$,
we consider the moduli space 
$\mathcal R_{g,\ell}$ parametrizing 
level-$\ell$ curves, \emph{i.e.} triples $(C,L,\phi)$
where $C$ is a smooth curve equipped 
with a line bundle $L$ and a trivialisation
$\phi\colon L^{\otimes \ell}\xrightarrow{\sim }\cO$.
The Kodaira dimension of $\mathcal R_{g,\ell}$ is
defined as the Kodaira dimension of an arbitrary resolution of singularities
of a completion; therefore, as a
first step toward the birational classification of 
$\cR_{g,\ell}$,
we consider a natural 
compactification $\ol{\cR}_{g,\ell}$ and 
study the singular locus $\Sing(\ol{\cR}_{g,\ell})$.
More precisely one needs to determine
the sublocus 
 $\Sing_{\mathrm {nc}}(\ol{\cR}_{g,\ell})\subseteq \Sing(\ol{\cR}_{g,\ell})$
of noncanonical singularities. 

For $\ell=2$, this analysis has been carried out by 
the second author and Ludwig in \cite{FaLu}
using Cornalba's compactification in terms of quasistable curves 
\cite{Cornalba} of $\ol {\mathcal R}_{g, 2}$. Clearly, we can
leave out the case 
$\ell=1$, which coincides with Deligne and Mumford's functor 
of stable curves $\ol{\mathcal M}_{g}=\ol{\cR}_{g,1}$.
The passage to all
higher levels presents a new feature
from Abramovich and Vistoli's theory of stable maps to stacks: 
the points of the compactification 
cannot be interpreted  
in terms of $\ell$-torsion line bundles on a scheme-theoretic curve, but rather 
on a stack-theoretic curve.
Instead of the above triples 
$(C, L\in \Pic(C),\phi\colon L^{\otimes \ell}\xrightarrow{\sim}\cO)$, 
we simply consider their stack-theoretic analogues 
$$(\ssC,\ssL\in \Pic(\ssC),\phi\colon 
\ssL^{\otimes \ell}\xrightarrow{\sim}\cO)\in \ol {\mathcal R}_{g,\ell},$$
where $\ssC$ is a one-dimensional stack, 
whose nodes may have nontrivial stabilisers $\pmmu_r\subseteq \pmmu_\ell$,
and where $\ssL\to \ssC$ is a line bundle whose fibres are faithful 
representations,
see Definition \ref{defn:leveltwistedcurve}.
This yields a compactification which is represented by a 
smooth Deligne--Mumford stack. 

In analogy with the moduli space of stable curves $\ol{\mathcal M}_g$, 
the boundary locus 
$\ol {\mathcal R}_{g,\ell}\setminus  {\mathcal R}_{g,\ell}$ 
can be described 
in terms of the combinatorics of the standard
dual graph $\Gamma$ whose vertices correspond 
to the irreducible components of the curve
and whose edges correspond to the nodes of the curve. 
In \S\ref{subsubsect:multiplicity}, we 
revisit this well known description by 
emphasising the natural role of 
an extra \emph{multiplicity} datum enriching the graph.
Indeed, the stack-theoretic structure 
of the underlying curve $\ssC$ and 
the line bundle $\ssL\to \ssC$ are determined, locally at a node, by assigning to 
each oriented edge $e$ a character $\chi_e\in \mathrm{Hom}(\pmmu_r,\GG_m)=\ZZ/r\subseteq \ZZ/\ell$ of the 
stabiliser.
Hence, to each point of the boundary we attach  
a dual graph $\Gamma$ and a
$\ZZ/\ell$-valued 1-cochain 
$M\colon e\mapsto \chi_e$ in $C^1(\Gamma;\ZZ/\ell)$ 
which we refer to as the \emph{multiplicity
of the level curve}.
(Proposition \ref{pro:Misclosed} recalls that a multiplicity co-chain  
arises at the boundary if and only if it lies in the kernel 
of $\partial\colon C^1(\Gamma;\ZZ/\ell)\to C^0(\Gamma;\ZZ/\ell)$.)

In order to describe the singular locus of $\ol \cR_{g,\ell}$, 
we lift to the moduli of level curves
a result of Harris and Mumford \cite{HM}. 
Theorem 2 in \cite{HM} implies  
that, for $g\ge 4$, the local structure $\defo(C)/\aut(C)$
of $\ol{\mathcal M}_{g}$
is singular if and only if $C$ is equipped with 
an automorphism which is 
\emph{not} the product of 
``\emph{elliptic tail involutions}'' (ETI for short): 
$$\Sing(\ol{\mathcal M}_g)=
N_1:=\{C\mid \aut(C)\ni \al\  \text{\emph{not} a product of 
ETI}\}.
$$
By definition, an ETI operates nontrivially on the curve $C$ 
only at a genus-$1$ component $E$ which meets 
the rest of the curve
at exactly one node $n$; its restriction to 
the ``tail'' $(E,n)$ is the canonical involution. 
These automorphisms are the only 
nontrivial automorphisms of  
curves (and also of level curves) 
which do not yield singularities: their action 
on moduli is simply a quasireflection. 
An example of a
point of $N_1$ is given 
by choosing a tail $(E,n)$ with $\aut(E,n)\cong \pmmu_6$.
This type of curves fill-up a sublocus $T_1 \subset  N_1$, 
of codimension 2 within $\ol{\mathcal M}_g$, which plays
a remarkable role in this paper.
Indeed, the 
order-$6$ automorphism $\al$ spanning $\aut(E,n)$
and fixing $C\setminus E$
is clearly not a product of ETI and, most important, 
yields a noncanonical singularity. 
This can be checked by the \rsbt{} criterion: $\al$  
operates on the regular space $\defo(C)/
\langle \text{ETI}\rangle$  as $(\frac13,\frac13,0,\dots,0):=
\diag(\xi_3,\xi_3,1,\dots,1)$ and  
modding out $\al$ yields a 
noncanonical singularity, since    
the \emph{age} $\frac13+\frac13+0+\ldots+0$ 
of $\al$ is less than $1$ (see Defn. \ref{defn:junior}).
Harris and Mumford show that these special tailed curves
are the only possible curves carrying a \emph{junior} (\emph{i.e.} aged 
less than $1$) 
automorphism; this amounts to the following statement. 
$$\Sing_{\mathrm{nc}}(\ol{\mathcal M}_g)=
T_1:=\{C\mid C\supset E,\ C\cap \ol{C\setminus E}=\{n\}, \ 
\aut(E,n)\cong \pmb \mu_6\}.
$$

The generalisation of this statement to level-$\ell$ curves 
poses no problems on the interior:
the variety 
${\mathcal R}_{g,\ell}$ has only 
canonical singularities; furthermore, the 
singular locus is contained in the inverse image 
of the singular locus of $\mathcal M_g$, 
but may be smaller in general: an automorphism $\al$ 
of a smooth curve $C$ does not necessarily 
give rise to an automorphism of $(C, L)$ if $\al^*L\not \cong L$.

When we consider the boundary locus
$\ol {\mathcal R}_{g,\ell}\setminus  {\mathcal R}_{g,\ell}$ 
the analysis becomes 
subtle due to a new phenomenon: stack-theoretic curves $\ssC$
may be equipped with 
\emph{ghost automorphisms} $\sta a\in \aut_C(\ssC)$ which fix
all geometric points of $\sta C$ and yet operate nontrivially 
on the stack $\sta C$.
The group $\aut_C(\ssC)$ has been completely 
determined by Abramovich, Corti, and Vistoli \cite{ACV};
here, we describe the ghosts of level structures $(\ssC,\ssL,\phi)$
$$\ul{\aut}_C(\ssC,\ssL,\phi)=
\{\sta a \in \aut_C(\ssC) \mid \sta a^*\ssL\cong \ssL\}.$$
The loci $N_1$ and $T_1$ naturally lift to $N_{\ell}$ and $T_{\ell}$ within
$\ol{\cR}_{g,\ell}$. For the definition of 
$N_\ell$,  
 no modification 
is needed 
(we require that $\ul\aut\level$ contains at least 
an automorphism which is not the product of ghost automorphisms or 
of ETI, 
using the obvious generalisation of ETI to stack-theoretic curves, 
Definition \ref{defn:elltail}). 
The locus $T_\ell$ is defined as we did for $T_1$ 
by requiring the presence of an elliptic tail 
$(E,n)$ with $\aut(E,n)\cong \pmmu_6$, but also by imposing 
the extra condition 
that the line bundle be trivial on the genus-$1$ tail 
(see Definition \ref{defn:Tcurve}). For general 
values of $\ell$, we have proper inclusions 
$N_{\ell}\subsetneq \Sing(\ol{\mathcal R}_{g,\ell})$ and $T_{\ell}\subsetneq \Sing_{\mathrm nc}(\ol{\mathcal R}_{g,\ell})$.
In order to obtain 
$\Sing(\ol{\mathcal R}_{g,\ell})$ one needs to include also
the entire locus of level curves with a nontrivial ghosts (\emph{haunted} level
curves) 
$$H_\ell=\{(\ssC,\ssL,\phi)\mid \ul\aut_C(\ssC,\ssL,\phi)\neq 1\}.$$
Similarly, in order to obtain 
$\Sing_{\mathrm{nc}}(\ol{\mathcal R}_{g,\ell})$ one needs to take 
the union of $T_\ell$ and of  
the locus of level curves haunted by a \emph{junior ghost} 
$$J_\ell=\{(\ssC,\ssL,\phi)\mid \ul\aut_C(\ssC,\ssL,\phi)\ni \sta a, \ \age(\sta a)<1\},$$
where, as above, the age refers to the action 
on the regular space $\defo(C)/\langle \text{ETI}\rangle$.
This locus turns out to be entirely contained in the inverse 
image  of the locus of curves with 
at least three nonseparating nodes, see Remark \ref{rem:codim}. 
In this way, $J_\ell$ has codimension at least three 
within $\ol \cR_{g,\ell}$ and is a closed subvariety, reducible in 
general, 
lying in 
the inverse image of the boundary divisor $\delta_0^{\stable}$, closure of the locus 
of irreducible one-nodal curves. 
We deduce that  
$T_\ell$ is the only irreducible component of 
$\Sing_\mathrm{nc}(\ol \cR_{g,\ell})$ having 
codimension $2$ within $\ol \cR_{g,\ell}$.

Summarising the above discussion and 
taking advantage of the study of $H_\ell$ and $J_\ell$ 
carried out in Theorems~\ref{thm:smooth_compositel},
\ref{thm:no_junior_ghosts} and \ref{thm:noncanonical},
we provide the 
desired extension of pluricanonical forms 
for $\ell=3,4,6$.  

\noindent \textbf{Theorem.}
\emph{Let $g\ge 4$.
We have} 
$$\Sing(\ol \cR_{g,\ell})
=N_\ell \cup H_\ell\qquad 
\text{\emph{and}}
\qquad\Sing_{\mathrm {nc}}(\ol \cR_{g,\ell})
=T_{\ell} \cup J_{\ell}.$$
\emph{Furthermore,
the locus $J_{\ell}$ is empty 
if and only if $5\neq\ell\le 6$; therefore, for $g\ge 4$ and $5\neq\ell\le 6$, we have
\begin{equation}\label{eq:extensionofpcf}
\Gamma \bigl((\rr_{g,\ell})^{\mathrm{reg}},K_{\rr_{g,\ell}}^{\otimes q}\bigr) \cong
\Gamma\bigl(\widehat{\cR}_{g,\ell},
K_{\widehat{\cR}_{g,\ell}}^{\otimes q} \bigr)
\end{equation}for any desingularisation
$\widehat{\cR}_{g,\ell}\rightarrow\rr_{g,\ell}$
and  for all
integers $q\geq 0$.
 }

The case $\ell=1$ is proven by Harris and Mumford
in \cite{HM}. The case $\ell=2$ is proven by 
the second author in collaboration 
with Ludwig \cite{FaLu} (following work of 
Ludwig, \cite{lu2007}). The above formulation 
presents the 
isomorphism \eqref{eq:extensionofpcf} as a consequence of   
$J_\ell=\varnothing$ (and Harris and Mumford's work on the locus $T_1$). 
However, the question of whether \eqref{eq:extensionofpcf} holds 
in the remaining cases (for $\ell=5$ or $\ell>6$) 
remains open. 
In \S\ref{subsect:compositel}, we provide a complete computation 
of the group $\ul\aut_C(\ssC,\ssL,\phi)$ which 
is interesting in its own right. 
This shows in particular that 
the existence of a point in $J_\ell$ over 
a stable curve $C$ 
is a combinatorial 
condition depending only on the dual graph of $C$ and on $\ell$.
The computation of $\ul\aut_C(\ssC,\ssL,\phi)$ will  
certainly allow to further detail the geometry of $J_\ell$ 
(\emph{e.g} the irreducible components) and of $\ol{\cR}_{g,\ell}$.
We show for instance 
a simple combinatorial device (ghost camera) detecting the presence of ghosts and counting their number.  

{Write $\ell$ as $\prod_{p\mid \ell} p^{e_p}$,
where $p$ denotes a prime divisor of $\ell$ and $e_p$ the 
$p$-adic valuation of $\ell$. Fix a level curve 
$(\ssC,\ssL,\phi)$, its dual graph $\Gamma$ and the multiplicity 
$M\colon e\mapsto \chi_e$. }
{Consider the sequence of subgraphs} 
\begin{equation}\label{eq:subgraphsintro}
\varnothing \subseteq \Delta_p^{e_p}
\subseteq \ldots 
\subseteq\Delta_p^k:=\{e\mid \chi_e\in (p^k) \text{\ in $\ZZ/(p^{e_p})$}\} 
\subseteq \ldots \subseteq \Delta_p^{1}\subseteq  \Delta_p^{0}=\Gamma,
\end{equation}
where $\chi_e\in \ZZ/\ell$ is regarded as an element of $\ZZ/(p^{e_p}).$
{The contraction to points of the respective subsets of edges yields}
\begin{equation}\label{eq:graphscontractionintro}
\Gamma\to \Gamma_p^{e_p}
 \rightarrow\ldots \rightarrow
\Gamma_p^k  \rightarrow\ldots \rightarrow
\Gamma_p^{1}\rightarrow  \bullet.
\end{equation}
Then all the ghost automorphisms are trivial, \emph{i.e.} 
$\ul\aut_C(\ssC,\ssL,\phi)=1$, if and only if, 
$\Gamma_p^{e_p}$  
are bouquet (connected graphs with a single vertex), all $p$. 
Lemma \ref{tsohg} provides an explicit description of the group structure 
of $\ul\aut_C(\ssC,\ssL,\phi)$. In particular, we get the number of ghosts.

\noindent\textbf{Corollary.} \emph{We have 
$\#\ul\aut_C(\ssC,\ssL,\phi)=\frac{1}\ell\prod_{p\mid \ell}  
p^{V_p},$
where $V_p$ is the total number of vertices appearing in the graphs $\Gamma_p^j$ for $1\le j\le e_p$.}  

Note that, if $\Gamma_p^j$ is a bouquet for all $p$ and $j$, then 
$\#\ul\aut_C\level=\frac{1}\ell\prod_{p\mid \ell}  
p^{e_p}=1.$ See Example \ref{exa:ghostgroup} for a simple demonstration. 
In \S\ref{rem:CCC} the above formula is used to match 
Caporaso, Casagrande, and Cornalba's computation \cite{CCC} of 
the length of the fibre of the moduli of level curves over 
the moduli of stable curves. 

The above description leads to the claim
that junior ghosts (hence 
noncanonical singularities of the form
$\defo/\aut_C\level$) can be completely ruled out 
for $5\neq \ell\le 6$ and 
are relatively rare in general: 
their appearance is due to the presence of age-delay edges 
which we describe in the proof of the 
No-Ghost Lemma \ref{thm:no_junior_ghosts}.

The computation of the
Kodaira dimension of $\ol{\cR}_{g,\ell}$ for $\ell\le 6$ and $\ell\neq 5$ can 
be carried out without further study of resolutions of 
noncanonical singularities; for instance,
in \cite{CEFS}, in collaboration with 
Eisenbud and Schreyer, we show the following statement.

\noindent\textbf{Theorem (\cite[Thm.~0.2]{CEFS}).} \emph{
${\cR}_{g,3}$
is a variety of general type for
$g\ge 12$. 
Furthermore, the Kodaira dimension
of ${\cR}_{11,3}$
is at least $19$.
}

\bigskip

%

\paragraph{\textbf{Structure of the paper.}}
In Section \ref{sect:level3} we introduce moduli of smooth
level curves, their compactification, the relevant combinatorics and the boundary locus of the
compactified moduli space.
In Section \ref{sect:sing} we study the local structure of the moduli space,
we develop the suitable machinery for the computation of the ghost automorphism group and we deduce the theorem stated above.

\paragraph{\textbf{Acknowledgements.}}
We are grateful to Roland Bacher for several illuminating conversations
that put on the right track the computation of the group of ghost automorphisms in the case where
$\ell$ is composite. We thank Dimitri Zvonkine for several useful comments and explanations. The first author wishes to thank the
Mathematics Department of the \linebreak Humboldt Universit\"at zu Berlin
where this work started. 
Finally, we are extremely grateful to the anonymous referee for his 
careful reading and for his precious comments.

\section{Level curves}\label{sect:level3}

We work over an algebraically closed field $k$ and we always denote 
by $\ell$ a positive integer prime to $\mathrm{char}(k)$.

\subsection{Preliminary conventions on coarse spaces and local pictures.}
The interplay between stacks and their coarse spaces is crucial in this paper.
Any stack $\sta X$ of Deligne--Mumford (DM) type
admits  an algebraic space $X$
and  a morphism  $\epsilon_{\sta X}\colon \sta X\to X$
universal with respect to morphisms from $\sta X$ to algebraic spaces,
\cite{KM}.
We regard this operation as a functor.
The \emph{coarsening} of any DM stack 
$\sta X$ is the algebraic space $X$ (also called coarse space).
The \emph{coarsening of a morphism} 
$\ssf\colon \sta X\to \sta Y$ between DM stacks 
is the corresponding morphism $f\colon X\to Y$
$$\sta X \rightsquigarrow X\qquad 
\text{and} \qquad \sta f \rightsquigarrow f \qquad 
\text{(coarsening).}
$$
We will use this notion both for 
curves, possibly stack-theoretic ones and equipped with level structures, and for their moduli, which are represented by stacks. 
For clarity let us provide two simple examples. 
(1) Consider
the quotient DM stack $\ssC=[\PP_z^1/\pmb \mu_k]$ 
with $\zeta\in \pmmu_k$ acting as 
$z\mapsto \zeta z$ ($k\ge 2$); the coarsening $C$ 
of $\ssC$
is the (smooth) quotient scheme $C=\PP_z^1/\pmb \mu_k\cong \PP_{z^k}^1$. 
(2) The coarsening  of the proper, smooth, $3g-3$-dimensional DM stack
$\ol{\sta M}_g$ of stable curves of genus $g\ge 2$ is the
$3g-3$-dimensional projective scheme $\ol{\mathcal M}_g$.

When we refer to 
the local picture of $\sta X$ at the geometric point $\sta p$, we  mean
the strict Henselisation of $\sta X$ 
at $\sta p$. Hence, the local pictures of 
$\ol {\sta M}_{g}$ and of
$\ol {\mathcal M}_g$
at the points representing $C$ are 
the quotient stack 
$[\defo(C)/\aut(C)]$ and 
the quotient scheme $\defo(C)/\aut(C)$, respectively.

\subsection{Smooth level curves}\label{sect:smooth}
We set up $\sta R_{g,\ell}$, the 
space $\cR_{g,\ell}$, and the compactification problem.

\subsubsection{The moduli stack of level smooth curves}
The integers $g\ge 2$ and $\ell\ge 1$ denote the \emph{genus} and the \emph{level}. In this way, we do not consider smooth curves with infinite automorphism groups. 
We further assume that the level is prime to the characteristic 
of the base field.
\begin{definition}
The stack $\ssR_{g,\ell}$ is the category of
level-$\ell$ curves $(C,L,\phi)$ where
$C$ is a smooth genus-$g$ curve (over a base scheme $B$),
$L$ is a line bundle on $C$,
$\phi$ is an isomorphism \linebreak $\phi\colon L^{\otimes \ell}\to \cO_C$.
We additionally require that the order of the isomorphism class of $L$ in $\Pic(C)$ is exactly $\ell$.
A morphism from a family $(C\to B,L,\phi)$ to
a family $(C'\to B',L',\phi')$ is given by a
pair $(s,\rho)$ where  $s\colon (C'/B')\to (C''/B'')$ 
is a morphism of curves 
and $\rho$ is an isomorphism of line bundles $s^*L''\to L'$ satisfying
$\phi'\circ\rho^{\otimes \ell}=s^*\phi''.$
\end{definition}
The category $\ssR_{g,\ell}$ is a  DM stack.
Its points have finite stabilisers and we have a coarsening $\cR_{g,\ell}$ and a morphism
$${\ssR_{g,\ell}}\to \cR_{g,\ell}.$$
The forgetful functor
$\ssf\colon \ssR_{g,\ell}\to \ssM_g$ to the category of smooth genus-$g$ curves
is an \'etale,
connected cover,
and indeed a finite morphism of stacks.
Finiteness can be regarded as a consequence
of the fact that every fibre (pullback of $\mathsf{f}$ via a geometric point)
consists of $\Phi_{2g}(\ell)$ geometric points, with 
$$\Phi_n(\ell) = \ell^n\prod_{p\mid \ell} \left(1-\frac{1}{p^n}\right) \qquad 
\left(\Phi_{n}(\ell)={\ell^{n}-1} \ \text{if $\ell$ is prime}\right).$$
Each of
such points of the fibre is isomorphic to the
stack $\ssB\pmb\mu_\ell=[\Spec\CC/ \pmb\mu_\ell]$. This happens
because each point has \emph{quasitrivial} automorphisms
acting on $C$ as the identity (\emph{i.e.}, $s$ equals $\id_C$),
and scaling the fibres
of $L$ by multiplication by  $\zeta\in \pmmu_\ell$.
Since $\ssB\pmb \mu_\ell$ has degree $1/\ell$ over $\Spec \CC$, we get
$$\deg \left(\sta f\colon \sta R_{g,\ell}\to \sta M_g\right)=\frac{\Phi_{2g}(\ell)}\ell\qquad
\left(\ =\frac{\ell^{2g}-1}\ell \ \text{if $\ell$ is prime}\right).$$
When we pass to the coarsening
$f\colon \cR_{g,\ell}\to \cM_g$
the automorphisms are forgotten. The morphism $f$
is still a finite connected cover, but it may well be ramified.


The stack $\ssR_{g,\ell}$ is not compact.
If we allow triples $(C_{\text{st}},L,\phi),$
where $C_{\text{st}}$
is a stable genus-$g$ curve, and keep 
the rest of the definition unchanged
we obtain an \'etale cover of $\ol{\ssM}_g$. Properness fails: 
the cardinality of the fibre is not constant, as can be easily checked when $C$ is a one-nodal irreducible 
curve: $\#\Pic(C)[\ell]=\ell^{2g-1}$. 

\subsection{Twisted level curves.}\label{subsect:twisted}
The compactification becomes straightforward once we use the
analogue of nodal curves in the context of
DM stacks (for a scheme-theoretic translation
see Remark \ref{rem:backtoschemes})

\subsubsection{Twisted curves} \label{subsubsect:twisted}
We point out that a less restrictive definition of twisted curve
occurs in the literature, where
no stability condition on $C$ is preimposed (see for instance \cite{Olsson}).
\begin{definition}
A twisted curve $\ssC$ is a DM stack
whose coarse space is a stable curve,
whose smooth locus is represented by a scheme,
and whose singularities are nodes whose local picture is given by 
$[\{xy=0\}/\pmb \mu_r]$ 
with  $\zeta\in \pmmu_r$ acting as
$\zeta \cdot(x,y)=(\zeta x, \zeta^{-1}y)$.
\end{definition}

\subsubsection{Faithful line bundles}\label{subsect:faithlinebundles}
A line bundle $\ssL$ on a twisted curve $\ssC$
may be pulled back from the coarse space $C$
or from  
an intermediate twisted curve fitting in a sequence of morphisms 
$\ssC\to \ssC'\to C$ (with 
$\ssC'\neq \ssC$ and $C'=C$). The following condition rules out this possibility.
\begin{definition}\label{defn:repmap} 
A \emph{faithful} line bundle on a twisted curve is a line bundle $\ssL \to \ssC$ for which
the associated morphism $\ssC\to \ssB \mathbb G_m$ is representable.
\end{definition}
\begin{remark}\label{rem:explicitindices}
Let us phrase the condition explicitly in terms of 
the local picture of the fibre bundle mapping from the 
total space of $\levelii$ to $\leveli$. 
The local picture of $\ssL\to \ssC$
at a node $\sta n$ of $\ssC$ is the projection
$\mathbb A^1\times \{xy=0\}\longrightarrow \{xy=0\}$,
with $\zeta\in \pmmu_r$ 
acting as $\zeta\cdot(x,y)=(\zeta x,\zeta^{-1}y)$ on   $\{xy=0\}$ and as
$\zeta \cdot(t,x,y)=(\zeta^m t,\zeta x,\zeta^{-1}y)$
on  $\CC\times \{xy=0\}$ for a suitable index $m$ (modulo $r$). 
Notice that the 
index $m\in \ZZ/r$ is uniquely determined as soon as we assign a
 privileged choice of a branch
of the node on which $\pmmu_r$ acts by the 
character $1\in \mathrm{Hom}(\pmmu_r,\GG_m)$
(the action on the
remaining branch is opposite).
In this setting, we may restate faithfulness as follows
$$\text{$\ssL$ is faithful at $\sta n$} \quad \Longleftrightarrow \quad
\text{the representation $\ssL\mid _{\sta n}$ is faithful} \quad \Longleftrightarrow \quad
\gcd(m,r)=1.$$
Notice that if we switch the roles of the two branches, then $m$ changes sign
modulo $r$. Faithfulness does not depend on the sign of $m$ and on the choice of the branch.
\end{remark}

\subsubsection{Twisted curves and their level structures}\label{subsubsect:levelontwisted}
Once the notion of twisted curve and the notion of faithful line bundle
are given, level-$\ell$ structures 
are defined as for smooth curves.
This is the main advantage of the twisted curve approach.
\begin{definition}\label{defn:leveltwistedcurve} A \emph{level-$\ell$ twisted curve} $(\ssC\to B, \ssL, \ssphi)$
consists of a {twisted curve} $\ssC$ of genus $g$
over a base scheme $B$,
a faithful line bundle $\ssL$,
and an isomorphism $\phi\colon\ssL^{\otimes \ell}\to \cO_{\ssC}$.
We additionally require that the order of the isomorphism class of $\ssL$ in $\Pic(\ssC)$ is exactly $\ell$.
\end{definition}

The category of level-$\ell$ twisted curves forms a smooth DM
stack $\ol{\ssR}_{g,\ell}$ of dimension $3g-3$, with a finite forgetful morphism
over the stack of stable curves
${{\sta f}}\colon \ol {\sta R}_{g,\ell} \to \ol {\sta M}_g$ of degree $\deg({{\sta f}})=\Phi_{2g}(\ell)/\ell$ (or, simply,
$(\ell^{2g}-1)/\ell$ when $\ell$ is prime).
This definition is given implicitly in \cite{AV} 
by Abramovich and Vistoli (level-$\ell$  curves correspond to a connected
component of the moduli stack of
stable maps to $\sta B\pmb\mu_\ell$).
The forgetful morphism ${{\sta f}}$ is ramified as we illustrate in \S\ref{subsect:boundary}.
See also
work of the first author \cite{Cstab} for a slightly modified version, which preserves the \'etaleness  of
the forgetful morphism from level-$\ell$  smooth curves.

\begin{remark}\label{rem:backtoschemes}
We can regard the data $\sta L\colon \sta C\to \sta B\mathbb G_m$ alongside with 
$\phi\colon\ssL^{\otimes \ell}\to \cO_{\ssC}$
as a representable map $\sta f\colon \sta C\to \sta B\pmmu_\ell$.
Then, by exploiting the representability of the map  $\sta f$,
one can pullback the universal $\pmmu_\ell$-cover  
$\Spec k\to  \sta B\pmmu_\ell$ to $\sta C$ 
and obtain a scheme-theoretic curve $P$ equipped with a $\pmmu_\ell$-action. 
In this way we can equivalently interpret the data of a level curve $\level$ or, more 
simply the data of a map $\sta f\colon \sta C\to \sta B \pmmu_\ell$, 
as a $\pmmu_\ell$-action on a scheme-theoretic curve $P$, 
with  $\zeta\in \pmmu_r$ acting  as
$\zeta \cdot(x,y)=(\zeta x, \zeta^{-1}y)$ at each node $\{xy=0\}$.
We refer the reader to \cite{ACV} and 
 \cite[p.506, (i)--(iii)]{ACG} for this interpretation.
We notice that $P$, equipped with its $\pmmu_\ell$-action, 
is a $\pmmu_\ell$-torsor on $\ssC$ (notice that all fibres over 
geometric points $\Spec k\to \ssC$ consist of $\ell$ distinct points which constitute a $\pmmu_\ell$-orbit). On the other hand, when 
we regard $P$ as a cover of $C$ (after composition with $\sta C\to C$)
we get an admissible 
$\pmmu_\ell$-cover of the coarsening $C$ 
in the sense of \cite[p.506,(i)--(iii)]{ACG} (some orbits may consist of  
$\ell/r<\ell$ points and, in this case, all points in the orbit are nodes).
\end{remark}

\subsubsection{Local indices} \label{subsubsect:explicitlevel_twisted_indices}
Consider the local picture from Remark \ref{rem:explicitindices}
 of a level-$\ell$   curve at a node:
$$\zeta \cdot(t,x,y)=(\zeta^m t,\zeta x,\zeta^{-1}y),\qquad 
\zeta\in \pmmu_{r}.$$
Notice that $\ssL^{\otimes \ell}\cong \cO$ implies $(\zeta^m)^\ell=1$; that is $\ell m\in r\ZZ$ with
$r\ge 1$ and $m\in \{0,\ldots, r-1\}$.
Faithfulness implies $\gcd(r,m)=1$; hence $r\mid \ell$.
In the rest of the paper, we  often use a single \emph{multiplicity
index} $M=m\ell/r$ to encode the local indices $r$ and $m$:
\begin{eqnarray}\label{eq:rmM}
\ {} & r(M)=\frac{\ell}{\gcd(M,\ell)},\  m(M)=\frac{M}{\gcd(M,\ell)} &(M\in\{0,\ldots,\ell-1\}),
\\
\ {} & M(r,m)=m\ell/r  &(r\mid \ell, \ m\in \{0,\ldots, r-1\},
\ \gcd(r,m)=1). \notag
\end{eqnarray}

The first interesting example is $\ell=3$. In this case, $M$ equals $m$ and,
once we choose a privileged branch at a node,
there are three possible local pictures:\begin{itemize}
\item[$M=0$] (\emph{i.e.}~$(m,r)=(0,1)$), trivial stabiliser;
\item[$M=1$] (\emph{i.e.}~$(m,r)=(1,3)$), nontrivial 
$\pmb\mu_3$-action: the restriction of 
$\levelii$ to the privileged branch parametrized by $x$
is $\zeta\cdot(t,x)=(\zeta t,\zeta x)$ 
(with $\zeta\in\pmmu_3$);
\item[$M=2$] (\emph{i.e.}~$(m,r)=(2,3)$), 
nontrivial $\pmb\mu_3$-action: 
the restriction of 
$\levelii$ to the privileged branch parametrized by $x$
is $\zeta\cdot(t,x)=(\zeta^2 t,\zeta x)$ 
(with $\zeta\in\pmmu_3$).
\end{itemize}
Let us fix a 
node with a given choice of a branch falling under  
case $(M=1)$ (resp. $(M=2)$); note that, 
if we change the choice of the branch, this case falls under
case $(M=2)$ (resp. $(M=1)$).
Therefore, we can
summarise
this analysis by saying that the nodes of
level-$3$ twisted  curves are either trivial ($M=0$) or
nontrivial $(M\neq 0)$ and, in this case, equipped with a
distinguished choice of a branch 
so that $M$ equals $1$.

\subsection{Dual graphs of twisted curves and multiplicity of level curves}\label{subsect:dualgraphsenriched}
The dual graph of a twisted curve is simply the dual graph of the coarse curve.

\subsubsection{Dual graphs.}
Dual graphs arising from the standard construction recalled below are
connected nonoriented graphs, possibly
containing multiple edges (edges linking the same two vertices) and
loops (edges starting and ending at the same
vertex). Consider a twisted curve $\ssC$ and its 
normalisation $\sta {nor}\colon {\ssC}' \to \ssC$.
Locally at a node of $\ssC$ the normalisation 
is given by $[\Spec \CC[x]/\pmmu_r] \sqcup
[\Spec \CC[y]/\pmmu_r] \longrightarrow [\{xy=0\}/\pmmu_r]$ with 
$\zeta\in \pmmu_r$ operating 
on $x$ as $\zeta\cdot x=\zeta x$ and on $y$ as $\zeta\cdot y=\zeta^{-1} y$.

\begin{definition}
The vertex set $V$ of the dual graph is the set of connected components of ${\ssC}'$.
The edge set $E$ of the dual graph is the set of nodes of $\ssC$. The two sets $V$ and $E$ determine a
graph as follows: a node identifies the
connected components of ${\ssC}'$ where its preimages lie,
in this way an edge links two (possibly equal) vertices.
\end{definition}

  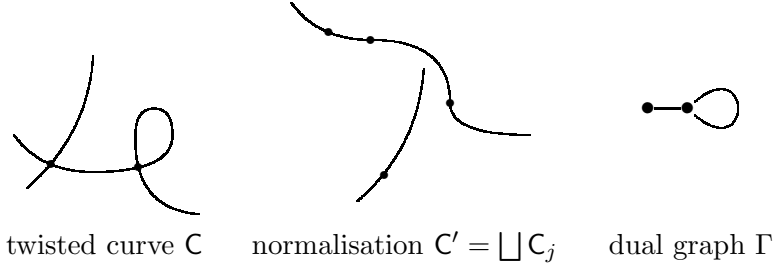
\begin{figure}[h]
  \begin{picture}(400,80)(-35,0)
   \qbezier(30,20),(53,40),(55,70)   
   \qbezier(25,40),(35,26),(55,26)     
   \qbezier(55,26),(85,26),(85,40)   
   \qbezier(85,40),(85,50),(78,50)  
   \qbezier(78,50),(71,50),(71,40) 
   \qbezier(71,40),(70,12),(95,10)
   \put(72,28){\circle*{3}}
   \put(39,29){\circle*{3}}
   \put(22,-5){\text{twisted curve $\ssC$}}
 \qbezier(155,15),(178,35),(180,65)   
   \put(165,25){\circle*{3}}
   \qbezier(130,90),(140,76),(160,76)     
   \put(144,79){\circle*{3}}
  \put(160,76){\circle*{3}}
   \qbezier(160,76),(190,76),(190,52)   
   \put(190,52){\circle*{3}}
    \qbezier(190,52),(190,40),(220,40)
     \put(115,-5){normalisation $\ssC'=\bigsqcup \ssC_j$}
   \put(262,50){\xymatrix@=.8pc{*{\bullet}\ar@{-}[r] & *{\bullet} \ar@{-} @(rd,ur) } }
    \put(250,-5){\text{dual graph $\Gamma$}}
  \end{picture}
 \caption{Normalisation and dual graph of a twisted curve} \label{fig:normdualgenus}
  \end{figure}

\subsubsection{Cochains.}
Each node of $\ssC$ has two branches.
Let $\mbb E$ be the set of branches of each node of $\ssC$.
The cardinality of $\mbb E$ is twice that of  $E$; there is a $2$-to-$1$ projection
$\mbb E\to E$ and an involution $e\mapsto \ol e$ of $\mbb E$.
On $\mbb E$ we can define a function $\mbb E\to V$, noted $e\mapsto e_+\in V$, assigning to
each oriented edge the vertex $v=e_+$ corresponding to the connected component $\ssC'_v$ of $\ssC'$ where the chosen
branch lies. We get $e\mapsto e_-$ by applying $(\ )_+$ after the involution.
If $e_+=e_-$ we have a loop (Figure \ref{fig:normdualgenus}): 
$\ol e\neq e$ in $\mbb E$
map to the same vertex via $e\mapsto e_+$.

We define the group of $1$-cochains and $0$-cochains of the dual graph with coefficient in $\ZZ$. We define
$C^0(\Gamma)$ as the set of $\ZZ$-valued functions on $V$
$$C^0(\Gamma)=\{a\colon V\to \ZZ\}=
{\bigoplus}_{\substack{v\in  V}} \ZZ.$$
We define $1$-cochains as antisymmetric $\ZZ$-valued functions 
on $\mbb E$
$$C^1(\Gamma)=\{b\colon \mbb E\to \ZZ
\mid b(\ol e)=-b(e)\},$$
where $\ol e$ and $e$ are oriented edges with opposite
orientations.
After assigning an orientation for each edge $e\in E$, we may identify
$C^1(\Gamma)$ to
${\bigoplus}_{{e\in E} } \ZZ,$ but 
we prefer working  with $\mbb E$.

The space of $\ZZ$-valued 
$0$-cochains and $1$-cochains $C^0(\Gamma)$
and $C^1(\Gamma)$
are equipped with non-degenerate bilinear $\ZZ$-valued forms
\begin{equation}\label{eq:pairings}
\langle a_1,a_2 \rangle= \sum\nolimits_{v\in V} a_1(v)a_2(v)\qquad
\langle b_1,b_2\rangle=\frac12 \sum\nolimits_{e\in \mbb E} b_1(e)b_2(e)
\end{equation}
with $a_1,a_2\in C^0$ and  $b_1,b_2\in C^1$.
The exterior differential is
\begin{align*}
\qquad
\delta\colon C^0(\Gamma)&\to C^1(\Gamma),\\
a&\mapsto \delta a,\qquad \qquad \qquad \text{ with \ \ $\delta a(e)=a(e_+)-a(e_-)$}.
\end{align*}
The adjoint operator with respect to $\langle\ \ ,\ \rangle$ is given
by
\begin{align*}
\partial\colon C^1(\Gamma)&\to C^0(\Gamma), \\
b&\mapsto \partial b, \qquad \qquad \qquad \text{ with\ \  $\partial b(v)=\sum_{\substack{{e\in\mbb E}\\{e_+=v}}} b(e)$.}
\end{align*}

\begin{remark}[cuts and circuits]\label{rem:circuits}
The image
$\im(\delta)$ is freely generated by $\#V-1$  {cuts} (see \cite[Ch.~4]{Biggs}),
\begin{equation}\label{eq:imiso}
\im(\delta)\cong\ZZ^{\oplus (\#V-1)}.                                           \end{equation}
We recall that a cut
is determined by a proper nonempty subset 
$W$ of the vertex set $V$ of $\Gamma$:
the sets $W$ and $V\setminus W$ form a partition of $V$.
Cuts are $1$-cochains  $b\colon \mathbb E\to \ZZ$ in 
$C^1(\Gamma)$
equal to $1$ on the (nonempty) set $H_W$ of edges
having only one end on $W$ 
and oriented from $W$ to $V\setminus W$,
equal to $-1$ on $\ol H_W=\{\ol e\mid e\in H_W\}$, and vanishing elsewhere.
By construction,  $H_W$ and $\ol H_W$ contain
no loops.
For this reason, in graph theory literature the image of 
$\delta$ is often referred to as the cut space.

The kernel $\ker \partial$ is freely generated by
$b_1=1-\chi(\Gamma)=1-\#V+\#E$  {circuits}
$$\ker\partial\cong \ZZ^{\oplus (1-\#V+\#E)}.$$
We recall that a {circuit} within a graph
is a sequence of $n$ oriented
edges $e_0, \ldots, e_{n-1}\in \mbb E$ labelled 
by $i\in \ZZ/n$, overlying $n$ distinct nonoriented 
edges in $E$, 
so that the head $(e_i)_+$ is also the tail 
$(e_{i+1})_-$
for all $i\in \ZZ/n$ 
and the $n$
vertices $v_i=(e_i)_-$ are distinct.
If we remove the condition $(e_0)_-=(e_{n-1})_+$
we obtain a path of edges joining $v_0=(e_0)_-$ to $v=(e_{n-1})_+$.
Here, we treat circuits and paths 
as $1$-cochains regarding their
characteristic function
(given by $1$ on $e_i$, $-1$ on $\ol{e}_i$ and $0$ elsewhere)
as an element of $C^1(\Gamma)$.
Circuits formed by a single oriented edge will
be called loops. 

Since $\delta$ is the adjoint of $\partial$,
for every element of $s\in \ker \partial$ and 
$\delta t\in \im\delta$ we have $\langle s,\delta t\rangle= 
\langle \partial s, t\rangle =0$. 
Conversely, the condition 
$\langle s,b\rangle=0$ for all $s\in \ker \partial$ implies 
$b\in \im \delta$. In order to see this, (for every connected 
component) fix 
a vertex $v_0\in V$ and define 
$a\in C^0(\Gamma)$ as $a(v)=\sum_i^{n-1} b(e_i)$ for a
path joining $v_0$ to $v$. The definition of $a$
does not depend on the chosen path because the difference
between two paths lies in $\ker \partial$ and we have
$\langle s,b\rangle=0$ for all $s\in \ker \partial$.
By construction, we have $\delta a=b$.
In this way, we get a simple criterion for 
$b\in C^1(\Gamma)$ to lie in  $\im\delta$:
\begin{equation} \label{eq:imandcirc}\text{$b\in C^1(\Gamma)$ is in $\im\delta$}\  \Longleftrightarrow\  
\text{$b(K)=\textstyle{\sum_{0\le i<n}} b(e_i)=0$ for all circuits $K=\textstyle{\sum_{0\le i<n} e_i}$ of $\Gamma$}.
\end{equation}
\end{remark}

\begin{remark}
For any abelian group $A$, by taking  
$\partial\otimes_\ZZ A$ and $\delta\otimes_{\ZZ} A$,
we recover the 
simplicial cohomology and homology complexes 
with coefficients in $A$ 
$$\delta_A\colon C^0(\Gamma;A)\to C^1(\Gamma;A),\qquad 
\partial_A\colon C^1(\Gamma;A)\to C^0(\Gamma;A)\qquad 
(C^i(\Gamma;A)=C^i\otimes_{\ZZ}A).$$
The forms \eqref{eq:pairings}
extend to pairings of the form
$$\langle \ , \ \rangle \colon C^0(\Gamma)\otimes_{\ZZ} C^0(\Gamma;A)\to A
\quad \text{and} \quad 
\langle \ , \ \rangle \colon C^1(\Gamma)\otimes_{\ZZ} C^1(\Gamma;A)\to A.$$
with the same definition, where  $a_1(v)a_2(v)$ is in $A$, 
for $a_1(v)\in \ZZ$ and $a_2(v)\in A$,
and similarly $b_1(e)b_2(e)$ is in $A$, 
for $b_1(e)\in \ZZ$ and $b_2(e)\in A$.
Notice that 
we still have equations of the form  
$\langle \delta s_0, t_1\rangle=\langle s_0,\partial_A t_1\rangle$
and $\langle s_1, \delta_A t_0\rangle=\langle \partial s_1,t_0\rangle$ 
for any $s_i\in C^i(\Gamma)$ and $t_j\in C^j(\Gamma;A)$. 

Then, any $\delta_A t\in \im\delta_A\subset C^1(\Gamma;A)$ and 
any $s\in \ker \partial\in C^1(\Gamma)$ 
satisfy $\langle s,\delta_A t\rangle=\langle \partial s, t\rangle= 0$. 
As above, the condition 
$\langle s,b\rangle =0$ for 
all $s\in \ker\partial$ implies $b\in \im\delta_A$.
We conclude that \eqref{eq:imandcirc} still holds.
More precisely, for a circuit $K=\sum_{0\le i<n} e_i$, 
we set 
$b(K):=\textstyle{\sum_{0\le i<n} b(e_i)}$
so that the claim \eqref{eq:imandcirc} generalises for 
$C^1(\Gamma;A)$ and $\im\delta_A$  \emph{verbatim}.
Notice that if $A$ is multiplicative (\emph{e.g.} $\pmb \mu_\ell$ and 
$\mathbb G_m$ below) all notations should be 
read accordingly; for instance, the condition
$b(\overline e)=-b(e)$ defining $C^1(\Gamma,A)$ should be read as $b(\overline e)=b(e)^{-1}$ and, similarly, 
the sum $\textstyle{\sum_{0\le i<n} b(e_i)}$
defining $b(K)$ just above should be read as
$\textstyle{\prod_{0\le i<n} b(e_i)}$.

\end{remark}

\subsubsection{The group of line bundles with trivial normalisation 
and the $\mathbb G_m$-valued cut space.}\label{sssect:taudefn}
The cohomology of the short exact sequence of sheaves
 $1\to {\mathbb G_m}\to \sta{nor}_*\sta{nor}^*{\mathbb G_m}
 \to{\mathbb G_m}|_{{\Sing}\ssC}\to 1$ and the analogue 
 sequence for $\pmb \mu_\ell$ yields the exact sequences
 \begin{eqnarray}\label{eq:Picsequenceall}
&C^0(\Gamma; \mathbb G_m)\xrightarrow{\ \delta\ } C^1(\Gamma; \mathbb G_m)\xrightarrow{\ \tau \ } \Pic(\ssC)
\xrightarrow {\ \sta {nor}^*\ } \Pic(\ssC'),\\
&\label{eq:Picsequence}C^0(\Gamma; \pmb \mu_\ell )\xrightarrow{\ \delta\ } C^1(\Gamma; \pmb \mu_\ell )\xrightarrow{\ \tau \ } \Pic(\ssC)[\ell]
\xrightarrow {\ \sta {nor}^*\ } \Pic(\ssC')[\ell].
\end{eqnarray}
Let us state explicitly  the definition of the homomorphism $\tau$. 
It is enough to consider a 
$1$-cochain $b$ vanishing on all edges except
$e_0$ and $\ol{e}_0$ where it equals $\zeta$ and $\zeta^{-1}$ respectively (the cochain is $\mathbb G_m$-valued  
if $\zeta$ lies in $\mathbb G_m$ and 
$\pmb\mu_\ell$-valued if $\zeta^\ell$ equals $1$).
The line bundle $\tau(b)$
is the locally free sheaf of regular functions $f$ on the normalisation of $\ssC$ at
the node $\sta n$
satisfying $f(\sta x)=\zeta f(\sta y)$ for $\sta x$ and $\sta y$ pre-images of $\sta n$,
with $\sta x$ lying on the branch corresponding to $e_0$ 
and $\sta y$ lying on the remaining branch.

\subsubsection{Line bundles on an $\ell$-twisted 
curve $\sta C$ up to pullbacks from $C$ are $\pmb\mu_\ell$-valued 
circuits.}
For any stable curve $C$, up to isomorphism, we can define a 
unique twisted curve $\wt{\sta C}$ with order-$\ell$ stabilisers 
at all nodes. We may call the curve $\sta C$ the 
$\ell$-twisted curve attached to $C$ (in another context
\cite{Cstab} this is called $\ell$-stable curve, because 
imposing that all stabilisers have the same cardinality amounts to 
a stability 
condition). We consider the line bundles of $\Pic(\wt{\sta C})$
up to pullbacks from $\Pic(C)$, or---which is the 
same---$\Pic(\wt{\sta {C}})[\ell]$ modulo
$\Pic(\wt{C})[\ell]$. 
By \cite[Cor.~3.1]{Cstab}, the long exact sequence of 
cohomology of 
the Kummer sequence $1\to \pmb\mu_\ell\to \mathbb G_m\to \mathbb G_m\to 1$ combined with that of 
$1\to {A}\to \sta{nor}_*\sta{nor}^*{A}
 \to{A}|_{{\Sing}\ssC}\to 1$ for $A=\mathbb G_m$ and $\pmb\mu_\ell$, 
yields the exact  sequence 
$$1\to \Pic(C)[\ell]\to \Pic(\wt \ssC)[\ell]\xrightarrow{} C^1(\Gamma;\ZZ/\ell)\xrightarrow{\ \partial \ } 
C^0(\Gamma;\ZZ/\ell).$$
Here, it should be noticed 
that the cohomology with coefficients in  $\pmb \mu_\ell$ 
naturally produces $\ZZ/\ell$-valued cochains. 
For instance, the $\pmb\mu_\ell$-valued 
second cohomology group of a curve is identified canonically 
with $\ZZ/\ell$, see \cite[\S14]{Milne}.
On the other hand 
$C^1(\Gamma;\ZZ/\ell)$ 
equals $\bigoplus_{e\in E} H^1(B\pmb\mu_\ell, \pmb\mu_\ell)$,
where each summand is the $\ell$-torsion subgroup of 
the group of characters  
$\mathrm{Hom} (\pmb \mu_\ell, \mathbb G_m)$, 
which---by definition---equals $\ZZ/\ell$.

\subsubsection{Multiplicity and $\ker \partial$.} \label{subsubsect:multiplicity}
Since oriented edges are in one-to-one correspondence with branches of nodes of $C$,
using \S\ref{subsubsect:explicitlevel_twisted_indices}, we define the multiplicity cochain.
\begin{definition}[$\ZZ/\ell$-valued multiplicity $1$-cochain of level-$\ell$ curves]\label{defn:mult_twisted}
Consider a  level-$\ell$ curve $\level$. To each oriented edge $e$, we can attach the
multiplicity $M(e)$ of $\level$ at the node (with its prescribed branch).
The function $M\colon e\mapsto M(e)\in \ZZ/\ell$ satisfies $M(\ol e)=-M(e)$ for all $e\in \mbb E$; in this way we have 
$$M\in C^1(\Gamma; \ZZ/\ell).$$
\end{definition}

\begin{proposition}\label{pro:Misclosed}
Let $C$ be a stable curve and consider the set of level-$\ell$ curves $\level$
with coarsening $C$.
Consider the dual graph of $C$ and the
differential $\partial$.
Then 
associating to $\level$ its multiplicity 1-cochain $M$ 
yields a surjective map from the set of level-$\ell$ 
curves with coarsening C to $ker \partial\subseteq C^1(\Gamma; Z/\ell)$.
\end{proposition}
\begin{proof}
All level-$\ell$  structures
overlying $C$ may be regarded
as elements in
$\Pic(\wt \ssC)[\ell]$, where $\wt{\sta C}$ is the 
twisted curve $\wt \ssC$
with $\pmb\mu_\ell$-stabilisers at all nodes 
(note that in  
$\Pic(\wt \ssC)[\ell]$ we do not impose faithfulness).
The multiplicity cochain lifts to a homomorphism $M\colon \Pic(\wt \ssC)[\ell]\to C^1(\Gamma;\ZZ/\ell)$.
The above claim follows from the exact  sequence 
$1\to \Pic(C)[\ell]\to \Pic(\wt \ssC)[\ell]\xrightarrow{} C^1(\Gamma;\ZZ/\ell)\xrightarrow{} C^0(\Gamma;\ZZ/\ell)$
(see \cite[Cor.~3.1]{Cstab})
and from the existence of an element in 
 $\Pic(C)[\ell]$ (for $g\ge 2$) whose order equals $\ell$.
 \end{proof}
 
 \begin{remark}\label{rem:Cesar}
 The above $\ZZ/\ell$-valued $1$-cochain is 
 (in another context) the same as the $1$-cochain 
 associated to a weighted subgraph of $\Gamma$ in 
 \cite[Rem.~2.2.1]{CCC}, including the fact that it takes values in $\ker\partial$. 
 Similarly, the description of ghost automorphisms 
 for $\ell$-prime in \S2.2 is related to Proposition 4.1.11 in 
 \cite{Ja1} and Lemma 2.3.2 in \cite{CCC}.
\end{remark}
 
 \begin{remark}
 The fact that $M$ takes values in $\ker\partial$ 
 may be regarded as saying that 
 the multiplicities $M_1,\dots, M_N$ at all special points 
 $\sta p_1,\dots, \sta p_N$ of a connected component 
 of the normalisation must add up to zero mod $\ell$.
 This is easy to see also directly once we express the line bundle induced by $\levelii$ 
 on a connected component $\sta X$ of the normalisation of $\leveli$
 as the line bundle $\mathcal O_{\sta X}(D)$, where $D$ is a Cartier 
 divisor on a smooth stack-theoretic curve $\sta X$. The divisor $D$ 
 is the sum of a divisor $I$ with 
 integer coefficients plus the rational coefficients divisor $S=\sum_{i=1}^N\frac{M_i}{\ell} [\sta p_i]$
 supported on the special points. 
 The condition $\levelii^{\otimes \ell}\cong \mathcal O$ implies 
 that $\ell I+\ell S$ is a principal divisor; in particular, it has 
 degree zero and we have $$\textstyle{\sum_{i=1}^N }M_i= \ell \deg S=-\ell \deg I\in \ell\ZZ.$$
 \end{remark}

\begin{example}\label{exa:twisted}
Consider a two-component twisted curve obtained as the union of
two smooth one-dimensional stacks $\sta X$ and $\sta Y$ meeting transversely at $2$ nodes.
For each node, let us measure the multiplicities
with respect to the branch lying in $\sta X$.
Proposition \ref{pro:Misclosed} says that
the multiplicities $M_1$ and $M_2$ should add up to $0$  (modulo $\ell$).
Let us examine in greater detail the
case $\ell=3$, $M_1=1$ and $M_2=2$.
Over $\sta X$ the third root $\ssL$ of $\cO$ is given by a divisor $D'$
of degree $0$ (a root of $\cO_{\sta X}$) with rational coefficients
of the form
$D'=\lfloor D'\rfloor + {[\sta x_1]} /3+ 2 {[\sta x_2]}/3,$
where $\sta x_1\colon \Spec \CC\to \sta X$ and $\sta x_2\colon \Spec \CC\to \sta X$
are the geometric points
lifting $\sta n_1$ and $\sta n_2$ to $\sta X$.
Conversely $\ssL|_{\sta Y}$ can be expressed as the degree-$0$ line bundle $\cO(D'')$ with
$D''=\lfloor D''\rfloor + 2 {[\sta y_1]}/3 + {[\sta y_2]}/3,$
where, again, $\sta y_1$ and $\sta y_2$
lift $\sta n_1$ and $\sta n_2$ to $\sta Y$.
\end{example}

The multiplicity $1$-cochain encodes much of the relevant
topological information characterising a level curve.
In what follows, we describe some natural invariants of
$\ZZ/\ell$-valued $1$-cochains.

\subsubsection{The support and its characteristic function.}
For any $1$-cochain $c\colon \mbb E\to \ZZ/\ell$ we consider  
the characteristic function of the support of  $c$
taking values in the extended set $\ZZ\cup \{\infty\}$
(we use the standard conventions $a < \infty$ and 
$a + \infty =\infty$ for $a \in \ZZ$).
\begin{equation}\label{eq:single-valued_characteristic}
\nu_c(e)=\begin{cases}
\infty      & \text{if } c(e)=0\in \ZZ/\ell  \\
0      & \text{otherwise.}
     \end{cases}
\end{equation}
Proposition \ref{pro:Misclosed} implies 
$\nu_c(e)=\infty$ for
any separating edge.

For any abelian group $A$, we 
present a natural subcomplex
$C^\bullet_\nu(\Gamma;A)$ of $C^\bullet(\Gamma;A)$
attached to a given symmetric characteristic 
function $\nu\colon \mbb E\to \{0,\infty\}$;
\emph{i.e.} to any subset of $E$.
In \S\ref{subsect:compositel} we generalise
this construction by allowing, instead of characteristic functions, more
general functions
arising
as the truncated valuations of $M$,
see \eqref{eq:vector-valued_characteristic}.
When $\ell$ is prime we recover 
the above defined function $\nu_c$.

\subsubsection{The contracted graph $\Gamma(\nu)$.}
We define precisely the graphs 
obtained by iterated edge-contractions of $\Gamma$ mentioned in the introduction.
Let us consider any symmetric characteristic function $\nu\colon \mbb E\to \{0,\infty\}$
(since $\nu$ is symmetric it descends to $E$ and
we sometimes abuse the notation by regarding it as a function on $E$). We attach
to $\Gamma$ a new graph $\Gamma(\nu)$ whose
sets of vertices and edges  $(\ol V,\ol E)$  are obtained from  $(V,E)$
\begin{enumerate}
    \item
by setting $E(\nu)=\{e\mid \nu(e)=0\}$,
\item
by modding out $V$ by the relations
$e_+\sim_\nu  e_-$ if $\nu(e)=\infty$,
\emph{i.e.} $V(\nu)=V/\sim_\nu$.
\end{enumerate}
In the new graph, the set of vertices of the edge $e\in E(\nu)$
is the set of vertices of $e \in E$ in $V$ modulo the relation $\sim_\nu$.
In simple terms, $\Gamma(\nu)$ is the contraction of all edges where $v>0$.
We  refer to $\Gamma(\nu)$ as a contraction of $\Gamma$ and,
conversely, to $\Gamma$ as a blowup of $\Gamma(\nu)$
(often in graph theory literature, the graph obtained from an
iterated edge-contraction is a ``minor'' of the initial graph, but we do not use this terminology here.)

\subsubsection{The complex {$C^\bullet_\nu(\Gamma;A)$}.}
The inclusion $i\colon  E(\nu) \hookrightarrow E$
and the projection $p\colon V\twoheadrightarrow V(\nu)$
yield homomorphisms
$p_*\colon C^0(\Gamma;A)\twoheadrightarrow C^0(\Gamma(\nu);A)$
and $i^*\colon C^1(\Gamma;A)\twoheadrightarrow C^1(\Gamma(\nu);A)$
and the \emph{contraction homomorphism} between complexes with differentials given by  $\partial$
\begin{equation}\label{eq:contraction}
\mathcal C\colon (C^\bullet(\Gamma;A), \partial)\twoheadrightarrow
(C^\bullet(\Gamma(\nu);A),\partial).
\end{equation}
Conversely, the homomorphisms  
$p^*\colon C^0(\Gamma(\nu);A)\hookrightarrow C^0(\Gamma;A)$
and $i_*\colon C^1(\Gamma(\nu);A)\hookrightarrow C^1(\Gamma;A)$ 
yield  the \emph{blowup homomorphism} between complexes with differential $\delta$
\begin{equation}\label{eq:blowup}
\mathcal B\colon (C^\bullet(\Gamma(\nu);A), \delta)\hookrightarrow
(C^\bullet(\Gamma;A),\delta).\end{equation}
The subcomplex 
$\mathcal B(C^\bullet(\Gamma(\nu);A),\delta)$ 
consists of the $0$-cochains $a\in C^0(\Gamma;A)$
and the $1$-cochains $b\in C^1(\Gamma;A)$
satisfying
$a(e_+)=a(e_-)$ and
$b(e)=0$ if  $\nu(e)=\infty$.
Within $(C^\bullet(\Gamma;A),\delta)$
we denote such a subcomplex by
$$C_{\nu}^\bullet(\Gamma;A)\subseteq C^\bullet(\Gamma;A)$$
In fact, we have
\begin{equation}\label{eq:imcap}
\mathcal{B} ( \im (\delta))
=\im (\delta)
\cap C^1_\nu(\Gamma;A).
\end{equation}
The inclusion from left to right follows from \eqref{eq:blowup}. Conversely,
$b=\delta(a)$
is in $C_\nu^1(\Gamma;A)$
only if, for any contracted edge $e$,
we have $a(e_+)=a(e_-)$; that is only if $a$ lies in $C_\nu^0(\Gamma;A))$.
Passing to the adjoint operator we also get
\begin{equation}\label{eq:inclusions}\mathcal{C}(\ker\partial)=\ker\partial.\end{equation}
Summarising, the contraction of a  circuit is a circuit
and  the blowup  of a cut is a cut.

\subsection{The boundary locus}\label{subsect:boundary}
We describe $\ol{\ssR}_{g,\ell}\setminus\ssR_{g,\ell}$ by classifying one-nodal level curves.
\subsubsection{Reducible one-nodal curves}\label{subsubsect:seplprime} Consider the union
$\ssC=\ssC_1\cup \ssC_2$ of two smooth stack-theoretic curves $\ssC_1$ and
$\ssC_2$ of genus $i$ and $g-i$ meeting transversally at
a point.
Proposition \ref{pro:Misclosed} implies
that the node has multiplicity zero or, in other words, trivial stabiliser. Hence, we have $\ssC=C$;
\emph{i.e.} $\ssC$ is an ordinary stable curve of compact type
$C=C_1 \cup C_2$.
The line bundle $\ssL=L$ on $C$  is determined by the choice of two line
bundles $L_1$ and $L_2$
satisfying $L_1^{\otimes \ell}\cong\cO_{C_1}$ and
$L_2^{\otimes \ell}\cong\cO_{C_1}$ respectively.
There are three possibilities:
\[\text{(i) $L_1\cong \cO$, $L_2\not\cong \cO$; \qquad 
(ii) $L_1\not \cong \cO$, $L_2\cong \cO$\qquad (iii) $L_1, L_2\not\cong \cO$}\]
(since  $\ssL\not\cong \cO$,  the possibility that both line bundles are trivial is excluded).
If $0<i<g/2$, these three cases characterise
three loci in the moduli space whose closures are
the divisors $\Delta_{g-i}, \Delta_{i}$ and $\Delta_{i:g-i}$ respectively.
We write $\delta_{g-i}, \delta_i$ and $\delta_{i:g-i}$ for the corresponding 
$\QQ$-divisors defined by the same conditions in the moduli stack. 
The morphism ${{\sta f}}$ is not
ramified along these divisors. We have that
\begin{equation}\label{eq:deltai}
{{\sta f}}^{*}(\delta^{\stable}_i)= \delta_{g-i} + \delta_{i} + \delta_{i:g-i}\end{equation}
where
$\delta^{\stable}_{i}$ is the $\QQ$-divisor class in $\ol {\sta M}_g$ defined by stable curves
with at least one node separating the curve
into two components of genus $i$ and $g-i$.

If $i=g/2$ the same classification
reduces to two divisors:
the closure of the
locus of one-nodal level curves for which only
one line bundle among $L_1$ and $L_2$ is trivial yields
$\Delta_{g/2}$, the closure of the locus classifying curve where both $L_1$ and $L_2$ are nontrivial yields
$\Delta_{g/2:g/2}$.

\subsubsection{Irreducible one-nodal curves} \label{subsubsect:irredlprime}
If $\ssC$
is irreducible and has one node,
then the node is of nonseparating type:
the normalisation $\sta {nor}\colon \ssC'\to \ssC$ is
given by a connected curve.
There are three possibilities:
\[\text{(i) $M=0$ and $\sta{nor}^* \ssL\not \cong \cO$; \qquad 
(ii) $M=0$ and $\sta{nor}^* \ssL \cong \cO$;\qquad (iii) $M\neq 0$}\]
The closures of
the loci of level curves satisfying the three conditions above
determine three divisors denoted by $\Delta_0',\Delta_0'', \Delta_0^{\ram}$ in the moduli space.
We write again $\delta_0',\delta_0'', \delta_0^{\ram}$ for the corresponding 
classes of divisors defined by the same conditions in the moduli stack.
The morphism $\mathsf{f}$ is not ramified along
$\delta_{0}'$ and $\delta_{0}''$. When $\ell$ is prime, $\mathsf{f}$ is
ramified with order $\ell$ along
$\delta_{0}^{\ram}$. Precisely, we have that, cf. \cite{CEFS}
\begin{equation}
\label{eq:lramification}{{\sta f}}^{*}(\delta^{\stable}_0)=
\delta_{0}' + \delta_{0}'' + \ell \delta_{0}^{\ram} \qquad (\ell \text{ prime}).                                                 \end{equation}
In general, $\delta_{0}^{\ram}$ can be decomposed 
into several components depending on the value of
the multiplicity index $M$; we refer to \S\ref{subsubsect:irredlcomp} for the study of the order of the ramification. 

This calls for
an analysis of the irreducible components of the boundary divisors
 $\delta_0',\delta_0'', \delta_0^{\ram}$ as well as for the previous divisors $\delta_i, \delta_{i:g-i}$, for $1\le i\le g/2$.
We  carry it out in the last part of this section (\S\ref{subsubsect:seplcomp} and \S\ref{subsubsect:irredlcomp}) as
a nice application of the combinatorial invariants  of level curves illustrated above.
On the other hand,
the present description of the boundary locus is sufficient for the entire Section \ref{sect:sing}
and may already be already regarded as a decomposition into irreducible components of the boundary for
$\ell=3$ (see Examples \ref{rem:level3reddecomp} and \ref{rem:level3irrdecom}).
Therefore, it is worthwhile to illustrate it further by an example, which will play an important role
in the rest of the paper: the case of level structures on elliptic-tailed curves.

\begin{example}[two level-$\ell$ structures on the elliptic-tailed curve]\label{exa:level_on_elltails}
We provide examples of two distinct twisted level curves, one representing a point
of $\Delta_{1}\cap \Delta_0^{\ram}$, and the other
representing a  point in $\Delta_{1}\cap \Delta_{0}''$.
Consider the stack-theoretic quotient $\sta E$ of
$\wt E=\PP^1/(\Omega_{P'})$ by $\pmb\mu_\ell$, with
$\zeta\in \pmmu_\ell$ operating by multiplication on the 
local parameter of $\PP^1$ at $0$. 
Now let $\ssC$ be a twisted curve
containing, as a subcurve, a copy of
such a genus-$1$ stack-theoretic curve $\sta E$.
We assume $\sta E\cap \ol{\sta C\setminus \sta E}=\{n\}$, where $n$ is a separating node with
trivial stabiliser (see Proposition \ref{pro:Misclosed}).

Level-$\ell$ structures in $\Delta_{1}$ can be defined on $\ssC$
by extending trivially on $\ol{\ssC\setminus \sta E}$
nontrivial $\ell$th roots of $\cO$ on $\sta E$.
To this effect, we can exploit
$\sta p\colon \wt E\to \sta E$, which is an \'etale $\pmb\mu_\ell$-cyclic cover of $\sta E$. The rank-$\ell$ locally free sheaf 
$\sta p_*\cO$ carries a $\pmmu_\ell$-representation 
and admits an isotypical decomposition $\sta p_*\cO=
\bigoplus_{\chi \in \ZZ/\ell=\mathrm{Hom}(\pmmu_\ell,\GG_m)}\sta L_\chi$.
We set 
$\levelii_{\ram}:=\sta L_1$, where $\chi=1$ is the character
$(1\colon \pmmu_\ell\subset \GG_m)\in \ZZ/\ell$.
In this way $\levelii_{\ram}$ is  
is equipped with an isomorphism 
$\leveliii_{\ram}\colon \ssL_{\ram}^{\otimes \ell}\cong \cO$.
Then $\sta L_{\ram}\to \ssC$
yields an object $(\ssC,\sta L_{\ram},\leveliii_{\ram})$ in $\Delta_{1}\cap \Delta_0^{\ram}$ because the multiplicity of $\sta L_{\ram}$ at
the nonseparating node is $\neq 0$ ($1$ or $l-1$ depending on the chosen branch).

The projection to the coarse space $\epsilon_\sta E \colon \sta E \to E$ allows us to 
define other nontrivial 
line bundle in $\Pic(\ssC)[\ell]$ as follows. 
On $\sta E$,  simply consider the
pullback  of
the line bundle of regular functions $f$
on the normalisation $E'\cong \PP^1$ satisfying $f(\infty)=\zeta f(0)$ for any $\zeta\in \pmmu_\ell$.
This is $\tau(\zeta)$ in the notation of \S\ref{sssect:taudefn}.
If $\zeta$ is a primitive root of unity, than we get a
line bundle $\ssL_{\etale}\to \sta C$ yielding a point 
in
$\Delta_{1}\cap \Delta_0''$ (the multiplicity at the nonseparating node is $0$ and $\ssL_{\etale}$ is trivial on
the normalisation
by construction).
\end{example}

\subsubsection{The closure of the locus of reducible one-nodal curves: irreducible components.}\label{subsubsect:seplcomp}
We provide a decomposition into irreducible components of the divisor defined above as 
the closure of the substack of reducible level-$\ell$ one-nodal curves.  
It is convenient to reformulate the problem in $\ol{\sta M}_g$: we study the 
divisor 
\begin{equation}\label{eq:Dstablered}
{\mathsf{D}}^{\stable}_{\operatorname{red}}=\sum\nolimits_{1\le i\le g/2} {\delta}^{\stable}_i                                                                                      \end{equation}
of stable curves with at least one separating node. We do so, by analysing the 
degree-$2$ map 
$\wt {\mathsf D}_{\operatorname{red}}^{\stable}\to {\mathsf{D}}^{\stable}_{\operatorname{red}}$ classifying stable curves alongside with a
separating node and a branch of the node. We have the natural decomposition
$\wt{\mathsf{D}}^{\stable}_{\operatorname{red}}= \bigsqcup_{i=1}^{g-1}\wt{\mathsf{D}}^{\stable}_{i}$
where $\wt{\mathsf{D}}^{\stable}_{i}$ classifies objects where the chosen
branch lies in the genus-$i$ connected component $\sta Z$ of the normalisation of the separating node.
Then, for $i=1,\dots, g-1$, we write ${\mathsf{D}}_i^{\stable}$
for the pushforward in $\ol {\sta M}_g$ of
the cycle $\wt{\mathsf{D}}_i^{\stable}$
via the map forgetting the branch; for $i\neq g/2$, the forgetful map from
$\wt {\mathsf{D}}_{i}^{\stable}$ has degree $1$ and
we have  ${\mathsf{D}}_i^{\stable}= {\mathsf{D}}_{g-i}^{\stable}$, for $i=g/2$ the forgetful map $\wt {\mathsf{D}}_{g/2}^{\stable}$
is a degree-$2$ morphism. In this way, we reformulate \eqref{eq:Dstablered} as follows
$${\mathsf{D}}^{\stable}_{\operatorname{red}}=
\frac12 \sum\nolimits_{i=1}^{g-1} {\mathsf{D}}^{\stable}_i.$$
For level curves, consider the stack
$\wt {\mathsf{D}}_{\operatorname{red}}$ classifying  level-$\ell$ curves alongside with a
separating node and a branch of the node. Hence, we get 
the decomposition  of $\wt{\mathsf{D}}_{\operatorname{red}}$ into connected components
and the corresponding decomposition of ${\mathsf{D}}_{\operatorname{red}}$ into irreducible components
$$\wt{\mathsf{D}}_{\operatorname{red}}= \bigsqcup\nolimits_{d_1,d_2,i}
\wt{\mathsf{D}}^{d_1,d_2}_{i}
\qquad \text{and} \qquad
{\mathsf{D}}_{\operatorname{red}}= \frac12 \sum\nolimits_{d_1,d_2,i}
{\mathsf{D}}^{d_1,d_2}_{i},
$$
where $d_1$ and $d_2$ are divisors of $\ell$ whose least common multiple equals $\ell$,
 $i$ ranges between $1$ and $g-1$, and the
loci $\wt{\mathsf{D}}^{d_1,d_2}_{i}$ and ${\mathsf{D}}^{d_1,d_2}_{i}$ are defined as follows.
The stack $\wt{\mathsf{D}}^{d_1,d_2}_{i}$ is the full subcategory of objects
where the data of the chosen branch and of
the genus-$i$ connected component $\sta Z$ of the normalisation of the
separating node satisfy
\begin{enumerate}
\item the branch lies in $\sta Z$ and $g(\sta Z)=i$,
\item the order of $\sta L$ on ${\sta Z}$ equals $d_1$,
\item the order of $\sta L$ on ${\ol{\sta C\setminus \sta Z}}$ equals
$d_2$.
\end{enumerate}
The divisor ${\mathsf{D}}^{d_1,d_2}_{i}$ is the pushforward of the cycle $\wt {\mathsf{D}}^{d_1,d_2}_{i}$
via the forgetful functor forgetting the choice of the branch and of the node.
Since the stack-theoretic structure of
one-nodal level-$\ell$ curves of compact type is trivial, there is no ramification of ${{\sta f}}$ along ${\mathsf{D}}_{\operatorname{red}}$: we have
${\mathsf{D}}_{\operatorname{red}}={{\sta f}}^*{\mathsf{D}}_{\operatorname{red}}^{\stable}.$
The factor $1/2$ in the above 
expression ${\mathsf{D}}_{\operatorname{red}}$
eliminates the factor $2$ due to 
 ${\mathsf{D}}^{d_1,d_2}_{i}=
{\mathsf{D}}^{d_2,d_1}_{g-i}$,
for any $(i,d_1,d_2)\neq(g/2,\ell,\ell)$, 
and to the degree $2$ of the map 
$\wt {\mathsf{D}}^{\ell,\ell}_{\ell/2}\to 
{\mathsf{D}}^{\ell,\ell}_{\ell/2}$ when $g$ is even.

\begin{example}\label{rem:level3reddecomp}
 For $\ell$ prime, we notice that  
 ${\delta}_i$, ${\delta}_{g-i}$ and ${\delta}_{i:g-i}$
are precisely the divisors ${\mathsf{D}}_i^{\ell,1}$, ${\mathsf{D}}_{i}^{1,\ell}$ and
 ${\mathsf{D}}_i^{\ell,\ell}$ for $i\neq g/2$ and $\frac12 
 {\mathsf{D}}_{g/2}^{\ell,\ell}$ otherwise. For $i\neq g/2$,
 they are the three irreducible components of ${{\sta f}}^*\delta_i^{\stable}$ of
degrees
$(\ell^{2i}-1)/\ell$, $(\ell^{2g-2i}-1)/\ell$ and $(\ell^{2g-2i}-1)(\ell^{2i}-1)/\ell$ over $\delta_i^{\stable}$
 (we check that they add up to $\deg({{\sta f}})=(\ell^{2g}-1)/\ell$).
 
\end{example}
\subsubsection{The closure of the locus of irreducible one-nodal curves:  irreducible components} \label{subsubsect:irredlcomp}
We
study the
divisor $\delta_0^{\stable}$ of stable curves with at least one separating node. 
As in \S\ref{subsubsect:seplcomp} we
use the notation ${\mathsf{D}}^{\stable}_{\operatorname{irr}}=\delta_0^{\stable}$ and we analyze
the degree-$2$ morphism
$\wt{\mathsf{D}}_{\operatorname{irr}}^{\stable}
\to {\mathsf{D}}^{\stable}_{\operatorname{irr}}$ classifying stable curves alongside with a
nonseparating node and a branch of the node.
Consider the stack
$\wt {\mathsf{D}}_{\operatorname{irr}}$ classifying  level-$\ell$ curves $\level$ equipped with a prescribed choice of
a nonseparating node and of a branch of such node:
this yields a notation $\sta x$ and $\sta y$ for the points lifting the node
to the normalisation
$\nor \colon \ssC'\to \ssC$ of the nonseparating node.
On $\wt {\mathsf{D}}_{\operatorname{irr}}$,
 we can define the data $M,d,h$
\begin{itemize}
\item[--] the multiplicity $M\in \ZZ/\ell$,
\item[--] the order $d$ (dividing $\ell$ and multiple of $\ell/\gcd(M,\ell)$)
of $\nor^*\ssL$ on $\ssC'$,
\item[--] the  gluing datum of a 
root of unity $h\in \pmb\mu_{\ell/d}$ satisfying $f(\sta x)=h f(\sta y)$
for the sections $f$ of   $(\nor^*\ssL)^{\otimes d}\cong \cO$.
\end{itemize}
Within $\wt {\mathsf{D}}_{\operatorname{irr}}$, 
we write $\wt {\mathsf{D}}^{M,d,h}_{\operatorname{irr}}$ 
for the locus where the multiplicity, the order and the gluing datum are 
respectively $M,d$ and $h$. 
Since $\sta L$ has order $\ell$, within  
$\wt {\mathsf{D}}_{\operatorname{irr}}$,
the gluing datum is always a primitive $\ell/d$th root of unity; 
however, the same definition, without any condition on the order of $\sta L$, 
yields a stack for any 
$h\in \pmmu_{\ell/d}$ and we have 
$\wt {\mathsf{D}}^{M,d,1}_{\operatorname{irr}}\cong 
\wt {\mathsf{D}}^{M,d,h}_{\operatorname{irr}}$, 
via $\sta L\mapsto \sta L\otimes \tau(\zeta)$ 
for $\zeta \in \pmmu_\ell$ with $\zeta^d=h$ (see \S\ref{sssect:taudefn}). 
The moduli stack $\wt {\mathsf{D}}^{M,d,h}_{\operatorname{irr}}$ 
is connected because 
$\wt {\mathsf{D}}^{M,d,1}_{\operatorname{irr}}$ is.
Indeed, following Remark \ref{rem:backtoschemes}, $\wt {\mathsf{D}}^{M,d,1}_{\operatorname{irr}}$ classifies $\pmmu_d$-covers $\pi\colon P\to C$ of genus-$g$ curves with a specified  
nonseparating node $n$ in $C$ corresponding to an orbit of $d/r$ nodes for 
$r=\ell/\gcd(M,\ell)$. We further require 
the following properties: (1) there is a privileged branch at $n$ and the action of 
$\pmmu_r\subseteq \pmmu_d$ 
is of the form  
$\zeta \cdot(z,w)=(\zeta z, \zeta^{-1}w)$ 
on $P$ and is given at the privileged 
branch by the character $m=M/\gcd(M,\ell)\in \ZZ/r$, (2) the normalisation of $C$ at $n$ and of $P$ at 
the $d/r$ points of $\pi^{-1}(n)$
is a connected $\pmmu_d$-cover $\pi\colon P'\to C'$.
The 
connectedness of  $\wt {\mathsf{D}}^{M,d,1}_{\operatorname{irr}}$ follows
precisely  from the 
connectedness of $P'$ and it may be interesting to see it explicitly.
We do it hereafter.

\begin{lemma}\label{pro:connectedness} The moduli stack 
$\wt {\mathsf{D}}^{M,d,1}_{\operatorname{irr}}$ is connected.
\end{lemma}
\begin{proof}
For simplicity, let us 
first consider the case $r=1$ (\emph{i.e.} $M\in \ell\ZZ$). The connected $\pmmu_d$-cover 
$\pi'\colon P'\to C'$ contains two distinguished orbits $D_x$ and $D_y\subset P'$ 
lying above the pre-images $x$ and $y\in C'$ 
of the node $n$. 
The claim follows from the existence of a family ranging through all possible
ways to glue back this normalised 
$\pmmu_d$-cover of $C'$ to form a $\pmmu_d$-cover of $C$ (in general there are $d/r$ 
distinguished possibilities; here we have $d$ choices). 
By deformation, 
it is enough to show the claim 
when $C'$ is  $\PP^1/(0\sim \infty)$ marked at $x$ and $y$
and $P'$ is the connected \'etale $\pmmu_d$-cover attached to 
$\tau(\xi_d)$.    
We take $P'$ itself as a base scheme and we 
define a family of $\pmmu_d$-covers over it. 
Above any point $p$ of $\Omega_{P'}=P'\setminus \Sing\cup D_x$ 
we can consider the cover $P'\to C'$ 
and two distinguished orbits: $D_x$ and the orbit of 
$p$. 
By taking the limit $\pmmu_d$-cover using the 
properness of 
$\ol{\sta R}_{g,d}$ (or simply by blowing up conveniently within 
$\PP^1\times \PP^1$), 
the family extends uniquely across the nodes of $P$ and the points of $D_x$. We 
obtain in this way a family $\mathcal P'$ of $\pmmu_d$-covers over  
the base scheme $P'$ with a section $\delta$ extending the diagonal 
of $(\Omega_{P'})^2$ and disjoint from the closure of 
the orbit $D_x\times \Omega_{P'}$. Fix a point $t\in D_x$; then, 
for any $g\in \pmmu_d$, we 
glue the image of $g\delta$ with the 
(closure of) $gt\times \Omega_{P'}$ and 
we get a family of $\pmmu_d$-covers which 
goes through all the possible ways to glue back $P'\to C'$ to a 
$\pmmu_d$-cover of the initial curve $C$. These are precisely the $d$ fibres above the $d$ points of 
$D_y$.  

If we drop the condition $r=1$, we regard the initial data 
$\pi'\colon P'\to C'$ as the composite of the 
\'etale $\pmmu_d/\pmmu_r$-cover 
$\varepsilon'\colon E'\to C'$ given by   $\tau(\xi_{d/r})$,
and a branched  
$\pmmu_r$-cover with action $m\in \ZZ/r$ 
at the points of  $D_x=(\varepsilon')^{-1}(x)$ and $r-m\in \ZZ/r$ 
at the points of  
$D_y=(\varepsilon')^{-1}(x)$. By construction, 
this branched cover is unique up to isomorphism 
and amounts to  
extracting an 
$r$th root of $\cO_{E'}(-mD_x-(r-m)D_y)$. 
We proceed as above
by defining a family of $\pmmu_d$-covers over the base scheme $E'$. To this effect, if 
$p$ lies in $\Omega_{E'}=E'\setminus \Sing\cup D_x$  we 
consider the $\pmmu_d$-cover of $C'$ 
given by the $\pmmu_r$ cover of $E'$ itself induced by  an $r$th root of $\cO_{E'}(-mD_x-(r-m)\Delta)$,
where $\Delta$ is the orbit of $p$. Again, this family of $\pmmu_d$-covers extends uniquely across 
$\Sing\cup D_x$ and admits two sections with disjoint orbits 
(lifting the closure of 
the diagonal of $(\Omega_{E'})^2$ and the closure of 
a section of $D_x\times \Omega_{E'}\to \Omega_{E'}$). By gluing along these sections as above, we get a family of $\pmmu_d$-covers going 
through all the possible ways to glue back 
$P'\to C'$ to a 
$\pmmu_d$-cover of the initial curve $C$. These are precisely the
$d/r$ fibres above the $d/r$
points of 
$D_y$.  
\end{proof}

Hence, we have decomposed $\wt{\mathsf{D}}_{\operatorname{irr}}$ 
into a disjoint union of $\sum_{M=0}^{\ell-1} \gcd(M,\ell)$ 
connected loci 
$\wt {\mathsf{D}}^{M,d,h}_{\operatorname{irr}}$, where $M\in\{0,\dots, \ell-1\}$,  $h$ is a $\gcd(M,\ell)$th root of unity and 
$d$ is determined by $h$: we set $d=\ell/\ord(h)$ so that $h$ is a primitive 
root in $\pmmu_{\ell/d}$.
We may remove $d$ from the notation and we 
get the desired decomposition into irreducible components of 
${\mathsf{D}}_{\operatorname{irr}}$:
$$\wt{\mathsf{D}}_{\operatorname{irr}}=
\bigsqcup_{\substack{M\in \ZZ/\ell \\ h\in \pmmu_{\gcd(M,\ell)}}}
\wt {\mathsf{D}}^{M,h}_{\operatorname{irr}} \qquad \text{and} \qquad {\mathsf{D}}_{\operatorname{irr}}= \frac12
\sum_{\substack{M\in \ZZ/\ell \\ h\in \pmmu_{\gcd(M,\ell)}}}
{\mathsf{D}}^{M,h}_{\operatorname{irr}}.
$$
Here ${\mathsf{D}}^{M,h}_{\operatorname{irr}}$ are the pushforwards 
in $\ol{\sta R}_{g,\ell}$ of the 
cycles $\wt {\mathsf{D}}^{M,h}_{\operatorname{irr}}$
via the morphism forgetting the prescribed branch. 

Note that, if  $M\in\{0,\ell/2\}$ and $h\in\{1,-1\}$, 
this forgetful morphism is a degree-$2$ morphism and the factor $1/2$ removes the degree factor 
appearing in the direct image of 
$\wt {\mathsf{D}}^{M,h}_{\operatorname{irr}}$. 
Actually, not all combinations with $M=0,\ell/2$ and $h=1,-1$ occur: 
if $\ell$ is odd, only $(M,h)=(0,1)$ occurs; 
if $\ell\in 2\ZZ\setminus 4\ZZ$, all combinations except $(\ell/2,-1)$ occur
($\pmmu_{\ell/2}$ does not contain $-1$); 
if $\ell$ is in $4\ZZ$, any of the four combinations occurs.

In all the remaining cases ${\mathsf{D}}^{M,h}_{\operatorname{irr}}$
equals  ${\mathsf{D}}^{\ell - M,h^{-1}}_{\operatorname{irr}}$.
For these terms, the sum is redundant and the factor $1/2$ removes the 
factor $2$ arising from summing twice the same divisor. 

Notice also that the order of the ramification of the
morphism ${{\sta f}}$  along ${\mathsf{D}}_{\operatorname{irr}}^{M,d,h}$
equals the order of $M$ in $\ZZ/\ell$; that is precisely $r=\ell/\gcd(M,\ell)$ .

\begin{example}\label{rem:level3irrdecom} 
If $\ell=3$, the stack 
$\wt{\mathsf{D}}_{\operatorname{irr}}$ has five connected components, 
as many as \linebreak $\sum_{M=0}^2\gcd(M,3)=3+1+1$. Since $\ell$ is odd
only one of these yields a connected degree-$2$ cover: 
$\wt{\mathsf{D}}^{0,1}_{\operatorname{irr}}
\to {\mathsf{D}}^{0,1}_{\operatorname{irr}}$.
The remaining cases are paired as follows: we have 
$\wt{\mathsf{D}}^{0,\xi_3}_{\operatorname{irr}}=\wt {\mathsf{D}}^{0,\xi^{2}_3}_{\operatorname{irr}}$
and $\wt{\mathsf{D}}^{1,1}_{\operatorname{irr}}=\wt{\mathsf{D}}^{2,1}_{\operatorname{irr}}$.

The divisors $\delta_0'$, $\delta_0''$ and $\delta_0^{\operatorname{ram}}$ 
over $\delta_0^{\stable}$ can be 
recovered as follows:
\begin{itemize} 
\item
the divisor $\delta_0'$ is 
the image in $\ol{\sta R}_{g,\ell}$ of $\wt{\mathsf{D}}^{0,1}_{\operatorname{irr}}$;
\item the divisor $\delta_0''$ is the 
image of 
 $\wt{\mathsf{D}}^{0,\xi_3}_{\operatorname{irr}}$ and it can be also written as 
$\wt {\mathsf{D}}^{0,\xi^{2}_3}_{\operatorname{irr}}$;
\item 
finally, $\delta_0^{\ram}$
is the image of $\wt{\mathsf{D}}^{1,1}_{\operatorname{irr}}$ 
and it can be also written as 
$\wt{\mathsf{D}}^{2,1}_{\operatorname{irr}}$.
\end{itemize}
These divisors
 coincide with the irreducible components of ${\mathsf{D}}_{\operatorname{irr}}$.
 As  substacks over $\delta_0^{\stable}={\mathsf{D}}^{\stable}_{\operatorname{irr}}$ they have respectively degree $1/3$ times
$3 (3^{2g-2}-1)$, $2$,  and $2 (3^{2g-2})$; 
using \eqref{eq:lramification},
 we count the degree of ${\delta}_0^{\ram}$ over ${\delta}_0$ with multiplicity
 $3$ and  we obtain again $\deg({{\sta f}})=(3^{2g}-1)/3$.

In view of the next example and further generalisations, 
we can perfortm this degree check more systematically 
via pushforward via 
$\wt {\sta f}\colon \wt{\mathsf{D}}_{\operatorname{irr}}\to 
\wt{\mathsf{D}}_{\operatorname{irr}}^{\stable}$, 
$\sta i\colon 
\wt {\mathsf{D}}^{\stable}_{\operatorname{irr}}\to \ol{\mathsf{M}}_{g}^{\stable}$,
$\sta j \colon 
\wt {\mathsf{D}}_{\operatorname{irr}}\to \ol{\mathsf{R}}_{g,\ell}^{\stable}$
and 
${\sta f}\colon  \ol{\mathsf{R}}_{g,\ell}\to 
\ol{\mathsf{M}}_{g}$ (the composite of the first two maps equals 
the composite of the last two maps).
Write $\sta f^*\delta_0^{\stable} = \delta_0'+\delta_0''
+3\delta_0^{\ram}$ as 
$$\sta f^*\delta_0^{\stable} =\frac12 \left(
 {\mathsf{D}}^{0,1}_{\operatorname{irr}}+
 {\mathsf{D}}^{0,\xi_3}_{\operatorname{irr}}+
 {\mathsf{D}}^{0,\xi_3^2}_{\operatorname{irr}}+
3 {\mathsf{D}}^{1,1}_{\operatorname{irr}}+
3 {\mathsf{D}}^{2,1}_{\operatorname{irr}}
 \right)$$
and take the pushforward 
$$\sta f_*\sta f^*\delta_0^{\stable}= \frac12 \left(
\sta f_* {\mathsf{D}}^{0,1}_{\operatorname{irr}}+
\sta f_* {\mathsf{D}}^{0,\xi_3}_{\operatorname{irr}}+
\sta f_* {\mathsf{D}}^{0,\xi_3^2}_{\operatorname{irr}}+
3\sta f_* {\mathsf{D}}^{1,1}_{\operatorname{irr}}+
3\sta f_* {\mathsf{D}}^{2,1}_{\operatorname{irr}}
 \right)$$
We write ${\mathsf{D}}^{M,h}_{\operatorname{irr}}=
\sta j_*\wt{\mathsf{D}}^{M,h}_{\operatorname{irr}}$ and 
we replace each term 
$\sta f_*\sta j_*\wt{\mathsf{D}}^{M,h}_{\operatorname{irr}}$ 
by $\sta i_*\sta f_*\wt{\mathsf{D}}^{M,h}_{\operatorname{irr}}=
d^{M,h}
(\sta i_*\wt{\mathsf{D}}^{\stable}_{\operatorname{irr}})$
where $d^{M,h}$ is 
the degree of the forgetful morphism 
$\wt{\mathsf{D}}^{M,h}_{\operatorname{irr}}\to 
\wt{\mathsf{D}}^{\stable}_{\operatorname{irr}}$ onto its image.
We obtain 
\begin{multline*}\sta f_*\sta f^*\delta_0^{\stable}=\left(d^{0,1}+d^{0,\xi_3}+
d^{0,\xi_3^2}+ 
3d^{1,1}+
3d^{2,1}\right) 
\frac12\sta i_* 
\wt{\mathsf{D}}^{\stable}_{\operatorname{irr}}\\
= 
\left(\frac{3(3^{2g-2}-1)}3+ \frac13 + \frac13 + 3\frac{3^{2g-2}}3+ 3\frac{3^{2g-2}}3 \right) 
\delta_0^{\stable}=\frac{3^{2g}-1}3 \delta_0^{\stable}.\end{multline*}
 \end{example}

\begin{example}\label{rem:level4irrdecom} 
If $\ell=4$, the stack 
$\wt{\mathsf{D}}_{\operatorname{irr}}$ has eight connected components, 
as many as \linebreak $\sum_{M=0}^3\gcd(M,4)=4+1+2+1$.
Four of them are paired and yield 
the same boundary divisor: ${\mathsf{D}}^{0 ,\xi_4}_{\operatorname{irr}}=
{\mathsf{D}}^{0 ,\xi_4^3}_{\operatorname{irr}}$
and 
 ${\mathsf{D}}^{1 ,1}_{\operatorname{irr}}=
{\mathsf{D}}^{3 ,1}_{\operatorname{irr}}$.
The remaining four, 
${\mathsf{D}}^{0 ,1}_{\operatorname{irr}},
{\mathsf{D}}^{0 ,\xi_2}_{\operatorname{irr}},
{\mathsf{D}}^{2 ,1}_{\operatorname{irr}}$ and ${\mathsf{D}}^{2 ,\xi_2}_{\operatorname{irr}}$,
yield boundary divisors with multiplicity $2$. 
We can write the fundamental class of the boundary as  
$$
{\mathsf{D}}_{\operatorname{irr}}= \frac12\left(
{\mathsf{D}}^{0 ,1}_{\operatorname{irr}}+
{\mathsf{D}}^{0 ,\xi_2}_{\operatorname{irr}}
+{\mathsf{D}}^{0 ,\xi_4}_{\operatorname{irr}}+
{\mathsf{D}}^{0 ,\xi_4^3}_{\operatorname{irr}}+ 
 {\mathsf{D}}^{1 ,1}_{\operatorname{irr}}+
{\mathsf{D}}^{2 ,1}_{\operatorname{irr}}+{\mathsf{D}}^{2 ,\xi_2}_{\operatorname{irr}}+
{\mathsf{D}}^{3 ,1}_{\operatorname{irr}}\right)$$
or, equivalently, highlighting its five irreducible components, 
 as 
$$
{\mathsf{D}}_{\operatorname{irr}}= 
\frac12{\mathsf{D}}^{0 ,1}_{\operatorname{irr}}+
\frac12{\mathsf{D}}^{0 ,\xi_4^2}_{\operatorname{irr}}
+{\mathsf{D}}^{0 ,\xi_4}_{\operatorname{irr}}+ 
{\mathsf{D}}^{1 ,1}_{\operatorname{irr}}+
\frac12{\mathsf{D}}^{2 ,1}_{\operatorname{irr}}+
\frac12{\mathsf{D}}^{2 ,\xi_2}_{\operatorname{irr}}.$$
By pulling back 
$\delta_0^{\stable}={\mathsf{D}}^{\stable}_{\operatorname{irr}}$
we get the same locus with multiplicities; following 
the last computation given for $\ell=3$ we check that this decomposition is compatible with 
$\deg(\sta f)=\Phi_{2g}(4)/4=(4^{2g}-2^{2g})/4$. We have
$$\sta f^*\delta_0^{\stable}=
\frac12\left(
{\mathsf{D}}^{0 ,1}_{\operatorname{irr}}
+{\mathsf{D}}^{0 ,\xi_4}_{\operatorname{irr}}+
{\mathsf{D}}^{0 ,\xi_4^2}_{\operatorname{irr}}+
{\mathsf{D}}^{0 ,\xi_4^3}_{\operatorname{irr}}+ 
 4 {\mathsf{D}}^{1 ,1}_{\operatorname{irr}}+
2 {\mathsf{D}}^{2 ,1}_{\operatorname{irr}}+2 {\mathsf{D}}^{2 ,\xi_2}_{\operatorname{irr}}+
4{\mathsf{D}}^{3 ,1}_{\operatorname{irr}}\right),
$$
hence
\begin{multline*}\sta f_*\sta f^*\delta_0^{\stable}=
\left(d^{0,1}+d^{0,\xi_4}+
d^{0,\xi_4^2}+ 
d^{0,\xi_4^3}+
4d^{1,1}+
2d^{2,1}+
2d^{2,\xi_2}+
4d^{3,1}
\right) \frac12\sta i_* 
\wt{\mathsf{D}}^{\stable}_{\operatorname{irr}}\\
= 
\left(\frac{4\Phi_{2g-2}(4)}4+\frac14+ \frac{2\Phi_{2g-2}(2)}4
 + \frac14 + 4\frac{4^{2g-2}}4+ 2\frac{2\Phi_{2g-2}(4)}4
+ 2\frac{2^{2g-2}}4 
+ 4\frac{4^{2g-2}}4\right) 
\delta_0^{\stable}\\=\frac{\Phi_{2g}(4)}4 \delta_0^{\stable},
\end{multline*}
where we used $\Phi_{n}(4)=4^n-2^n$ and $\Phi_{n}(2)=2^n-1$.

 \end{example}
 
 The combinatorics involved in the 
 previous examples is subsumed in the general 
 treatment of \S\ref{rem:CCC}, where we provide the computation of 
 the length of any fibre of moduli of level curves.
 
 \begin{remark}[Compatibility with the terminology of \cite{CEFS}]
We have the following 
relations between the coarse decomposition in terms of 
$\delta$-divisors and the finer analysis in terms of 
$\sta D$-divisors.  
We have 
\begin{align*}
\delta_{i}=\sta D_{i}^{1,\ell}=\sta D_{g-i}^{\ell,1}, \qquad
&\delta_{g-i}=\sta D_i^{\ell,1}=\sta D_{i}^{\ell,1},\qquad
&&\delta_{i:g-i}=\sum\nolimits_{\substack{d_1,d_2 \mid \ell \\
\lcm(d_1,d_2)=\ell}}
\sta D_i^{d_1,d_2} 
\ \ (i\neq g/2),\\
\delta_0'=
\frac12\sum_{h\in \pmmu_\ell} \sta D_{\operatorname{irr}}^{0,h},
\qquad 
&\delta_0''=\frac12\sta D_{\operatorname{irr}}^{0,1},
\qquad 
&&\delta_0^{\ram}=\frac12
\sum\nolimits_{\substack{0\neq 
M\in \ZZ/\ell \\ h\in \pmmu_{\gcd(M,\ell)}}} \sta D_{\operatorname{irr}}^{M,h}
\end{align*}
(for $g\in 2\ZZ$ we have 
$\delta_{\frac g2:\frac g2}=
\frac 12 \sum_{d_1,d_2 \mid \ell}\sta D_{g/2}^{d_1,d_2}$ where again
we impose $\lcm(d_1,d_2)=\ell$).

Since the ramification index at $\sta D_{\operatorname{irr}}^{M,h}$ 
equals $r(M)$, the equation ${{\sta f}}^{*}(\delta^{\stable}_0)=
\delta_{0}' + \delta_{0}'' + \ell \delta_{0}^{\ram}$ only holds 
for $\ell$ prime. 
However, 
we point out that the same equation holds if we replace 
$\delta_0^{\ram}$ by
$\sum_{a=1}^{\lfloor \ell/2\rfloor} \delta_0^{(a)}$ 
and we set for any $a=1,\dots, \lfloor \ell/2\rfloor$ 
$$\delta_0^{(a)}=
\frac 12  
 \sum\nolimits_{\substack{M=a, -a\\ 
 h\in \pmmu_{\gcd(M,\ell)}}}  \frac1{\gcd(a,\ell)} 
 \sta D_{\operatorname{irr}}^{M,h}.$$
When $\ell$ is prime, which is the main focus in \cite{CEFS}, 
the divisor above arises naturally as a substack 
within $\ol{\sta R}_{g,\ell}$.
For composite values of $\ell$, 
the above divisor can be still 
obtained as a codimension-1 substack of 
a suitable  
compactification of $\sta R_{g,\ell}$; 
indeed  in \cite[\S 1.3]{CEFS} 
we illustrate how $\delta_0^{(a)}$ the multiplicities 
$1/{\gcd(a,\ell)}$ arise naturally when working with the 
compactification of \cite{Cstab} 
which simply imposes stabilisers of order $\ell$ at all nonseparating nodes 
instead of imposing the faithfulness condition on $\sta L$.
These compactifications have the same coarse space $\ol {\mathcal R}_{g,\ell}$, 
however,  they are not
very convenient for the study of the singularities
of $\ol{\mathcal R}_{g,\ell}$ because their stabilisers 
are extensions by quasireflections of the 
stabilisers of $\ol{\sta R}_{g,\ell}$. 
\end{remark}

\section{The singularities of the moduli space of level curves}\label{sect:sing}
In this section we assume $g\ge 4$; this is a
standard condition in the study of the singularity locus
of the coarse moduli space of curves essentially motivated by Harris and Mumford's work \cite{HM}
(see Remark \ref{rem:faithful} and
Proposition \ref{pro:quasireflections} and also the role played by this condition in the proof of Theorem \ref{thm:no_junior_ghosts}).

At the point represented by $\level$, the local pictures of ${\ol {\sta R}}_{g,\ell}$ and of $\rr_{g,\ell}$
are given by $[\defo\level/\aut\level]$ and
$\defo\level/\aut\level$. We relate these local pictures to 
$[\defo(C)/\aut(C)]$ and $\defo(C)/\aut(C)$, the local pictures of 
$\ol {\sta{M}}_g$ and $\ol M_g$ at  $C$.

\subsection{Deformation spaces and automorphism groups}\label{subsect:automlevel}
%
The space $\defo\level$
can be expressed in terms of $\defo(C)$.
\subsubsection{Deformations of $C$.}\label{subsubsect:defocurve}
We only consider the stable curve $C$. 
We denote by $\defo(C, \Sing(C))$ 
the space of deformations of the curve $C$ alongside with 
its set of nodes $\Sing(C)$.
It may be decomposed canonically as 
$$\defo(C,\Sing(C))=\bigoplus\nolimits_{v\in V}H^1(C_v',T(-D_v)),$$
where, by $C_v'\subseteq C$, we denote the
connected component of the normalisation
of $C$ attached to $v$, and, by $D_v$, we denote the divisor formed by the
inverse images of the nodes of $C$ under the normalisation map. Indeed,
the group $H^1(C_v',T(-D_v))$ parameterises deformations of the
pair $(C_v',D_v)$. 

Note that $\defo(C,\Sing(C))$ is a subspace of $\defo(C)$; by modding it out 
we obtain 
$$\defo(C)/\defo(C,\Sing(C))= \bigoplus\nolimits_{e\in E} N_e,$$
where the decomposition is canonical and the term
$N_e$ denotes the fibre over $[C]$ of 
the normal bundle to the locus of 
deformations preserving the node attached to $e$. In fact 
$N_e$ is one dimensional; giving a (non canonical) parametrisation
$$N_e\cong \Spec(\CC[t_e])=:\mathbb A^1_{t_e}$$
is equivalent to choosing a smoothing\footnote{A smoothing of a node $n\in C$ is an infinitesimal 
deformation $\mathcal C\to \Spec\CC[t_e]/(t_e^2)$ of the curve $C$, 
where $n$ is a regular point within the scheme $\mathcal C$.}
 of 
the node attached to $e$ 
along $t_e$.

\subsubsection{Deformations of $\level$.}\label{subsubsect:defolevelcurve}
 The deformation space 
 $\defo\level$ is canonically identified with 
 $\defo(\sta C)$ via the 
 \'etale   forgetful functor $\level\mapsto \sta C$. 
 The picture of $\defo(\sta C)$ 
 is analogue to the above picture for $\defo(C)$.

Within $\defo\level=\defo(\sta C)$, 
we consider   
$\defo(\leveli,\levelii,\leveliii,\Sing(\leveli))=
\defo(\leveli,\Sing(\leveli))$, the subspace 
of deformations where 
$\Sing(\sta C)$ deforms alongside $\sta C$ 
(the topological type of the curve is preserved).  
In fact, 
via the natural forgetful map $\defo \level=
\defo(\sta C)\to \defo(C)$, 
this space is canonically identified to $\defo(C,\Sing(C))$
(this happens because $H^1(C_v',T(-D_v))$
and the stack-theoretic counterpart are canonically isomorphic, 
\cite[Lem.~2.3.4]{AV}). Therefore, we have
\begin{equation}\label{eq:defolevel}
\defo(\leveli,\levelii,\leveliii,\Sing(\leveli))=
\defo(\leveli,\Sing(\leveli))=
\bigoplus\nolimits_{v\in V}H^1(C_v',T(-D_v)).                                  \end{equation}
The corresponding  
quotient space canonically decomposes as 
\begin{equation}\label{eq:defomodtopolpres}
\defo\level/\defo(\leveli,\levelii,\leveliii,\Sing(\leveli))=
\defo(\sta C)/\defo(\sta C, \Sing(\leveli))
= \bigoplus\nolimits_{e\in E} K_e.\end{equation}
As in \S\ref{subsubsect:defocurve}, 
$K_e$ is one-dimensional; indeed, it can be parametrized by 
$\tau_e$, the $r(e)$th root  of 
the above mentioned parameter $t_e$ 
($r(e)$ is the local index from 
\S\ref{subsubsect:explicitlevel_twisted_indices}). In this way 
$\tau_e$ may be geometrically interpreted as 
the parameter smoothing the node of $\sta C$ corresponding to $e$ and
the map between quotients
$\defo\level/\defo(\leveli,\levelii,\leveliii,\Sing(\leveli))\rightarrow
\defo(C)/\defo(C,\Sing(C))$
is the direct sum, for $e$ in $E$, of 
\begin{align*} 
\mathbb A^1_{t_e}&\to \mathbb A^1_{\tau_e}\\
t_e&\mapsto t_e^{r(e)}.
\end{align*}

\subsubsection{Automorphisms of $\level$.} An automorphism of a level curve $\level$  is given by
$(\iso,\rho)$ where $\iso$ is an isomorphism of
$\leveli$, and  $\rho$ is an isomorphism of
line bundles $\iso^*\levelii\rightarrow\levelii$ satisfying $\phi\circ \rho^{\otimes \ell}=\iso^*\leveliii$
$$
\xymatrix@C=2.2pc{
\sta s^* \sta (\levelii^{\otimes \ell}) \ar[r]^= \ar[d]^{\sta \iso^*\leveliii}&
(\sta s^* \sta \levelii)^{\otimes \ell} \ar[r]^{\ \ \ \rho^{\otimes r}} &
\levelii^{\otimes \ell}\ar[d]^{\leveliii}\\
\sta s^* \mathcal O\ar[rr]^=& &  \mathcal O.} $$

We write
$${\aut}\level=\{(\iso,\rho)\mid \iso \in \aut(\ssC),\  \rho\colon \iso^*\ssL\xrightarrow{\cong} \ssL,
\ \phi\circ \rho^{\otimes \ell}=\iso^*\rho\}.$$
On the other hand, we consider
$$\ul{\aut}\level=\{\iso\in \aut(\ssC)\mid \iso^*\ssL\cong \ssL\}.$$
It is easy to see that for each element $\iso\in \ul\aut\level$
there exists $(\iso,\rho)\in \aut\level$.
Two pairs of this form differ by
a power of a quasitrivial automorphism of the form
$(\id_{\ssC},\xi_\ell)$ operating by scaling the fibres. We have the following exact sequence
$$1\to \pmb\mu_\ell\to \aut\level \to \ul{\aut}\level\to 1.$$
As already mentioned, quasitrivial isomorphisms act trivially on $\defo\level$.
Therefore, it is natural to study the action of $\aut\level$ on $\defo\level$ by focusing on
$\ul{\aut}\level=\aut\level/\pmb\mu_\ell$.

The coarsening $\iso\mapsto \coaiso$,
induces a group
homomorphism
$$\mathfrak{coarse}\colon \ul\aut\level\to\aut(C).$$
The kernel and the image are natural geometric objects of
independent interest. We denote them by $\ul\aut_C\level$ and $\aut'(C)$ and we refer to them as
the group of ghost automorphisms and the group of
automorphisms of $C$ lifting to $\level$
\begin{equation}\label{eq:ghostsandlifts}
1\to \ul\aut_C\level \to \ul\aut\level\to \aut'(C) \to 1.                                                           \end{equation}

\subsubsection{Ghosts automorphisms.}\label{subsubsect:ghostautom}
The kernel of $\mathfrak{coarse}$ is the group   of ghosts automorphisms:
automorphisms $\iso$ of $\ssC$ fixing
at the same time the underlying curve $C$ and
the isomorphism class of the overlying line bundle
$\ssL$; we write
$$\ul{\aut}_C\level:=\ker(\mathfrak{coarse}).$$
It is worth pointing out that an automorphism of a stack $\sta{X}$ may well be nontrivial
and, at the same time, operate as the identity on the coarse space $X$. In our case, stabilisers are
isolated and we may treat this issue locally. Consider $\sta{U}=[\{xy=0\}/\pmb\mu_r]$ the quotient stack
where $\xi_r$
acts on $(x,y)$ as $(\xi_r x,\xi_r^{-1}y)$. All automorphisms $(x,y)\mapsto (\xi_r^b x,\xi_r^{a}y)$
induce the identity on the quotient space. The automorphisms
fixing the coarsening $U$
up to natural transformations (the
$2$-isomorphisms $(x,y)\mapsto (\xi_r^i x,\xi_r^{-i}y)$)
form a group $\aut_U(\sta{U})\cong\pmb\mu_r$ generated by $(x,y)\mapsto(\xi_r x,y)$. In this way,
the automorphisms of a twisted curve $\ssC$ with order-$r$ stabilisers at $k$ nodes
which fix $C$ are freely generated by $k$ automorphisms each one operating as
$(x,y)\mapsto(\xi_r x,y)$ at a node, \cite[\S7]{ACV}. Note that no branch has been privileged: via
the natural transformation $(x,y)\mapsto (\xi_r x,\xi_r^{-1}y)$,
the automorphism $(x,y)\mapsto(\xi_r x,y)$ is $2$-isomorphic to $(x,y)\mapsto (x, \xi_r y)$.

This explains the canonical identification from 
\cite[\S7, Prop.~7.1.1]{ACV}
\begin{equation}\label{eq:ACVidentif}
\aut_C(\ssC)= \bigoplus\nolimits_{e\in E}\pmb\mu_{r(e)}.
\end{equation}
We notice that, all throughout the paper, 
we adopt for clarity the additive notation for sums and direct sums 
(\emph{e.g.} we write $\bigoplus_{e\in E} \pmmu_{r(e)}$, 
$(\pmmu_\ell)^{\oplus \#V}$, and, 
where sums over a set of indices $I$ are 
not direct, we use the symbol $\sum_{i\in I}$)

The summand labelled by $e$ on the right hand side 
corresponds to 
$\aut_{\sta C\setminus \{{\sta n}_e\}}(\sta C)$, the subgroup
of automorphisms of $\sta C$ operating 
as the identity off the node $\sta n_e$ attached to $e$.
The action 
of $\aut_{\sta C\setminus \{{\sta n}_e\}}(\sta C)$ on 
$\defo(\sta C)/\defo(\sta C,\Sing(\sta C))=
\bigoplus_{e\in E} K_e$ (see in \eqref{subsubsect:defolevelcurve})
coincides with the natural action of $\pmmu_{r(e)}$ on the 
one-dimensional term $K_e$: 
the character $1$ in $\mathrm{Hom}(\pmmu_{r(e)},\GG_m)=\ZZ/r(e)$.

\subsubsection{Automorphisms of $C$ lifting to $\level$.} The image of $\ul{\aut}\level$ via 
$\mathfrak{coarse}$ is the
group of automorphisms $s$ of $C$, which can be obtained as the coarsening of a morphism
$\iso$ of $\ssC$ satisfying $\iso^*\ssL\cong \ssL$. Clearly, this group differs in general from $\aut(C)$;
notice for instance that
automorphisms of the coarse curve $C$ that do not preserve the order of the overlying stabiliser
of $\ssC$ cannot be lifted to $\ssC$. More precisely we have the obvious inclusion
$$\aut'(C):=\im (\mathfrak{coarse})\subseteq\{s\in \aut(C)\mid s_\Gamma^*M=M\}$$
where $s_\Gamma$ is the dual graph automorphism induced by $s$.
The condition $s_\Gamma^*M=M$ is restrictive in general (it is not, of course, when $M$ vanishes), but it does not guarantee the
existence of an automorphism $\iso$ lifting $s$. For a simple
counterexample, consider a point of the divisor $\Delta_{g/2}$ from \S\ref{subsect:boundary} lying over
the isomorphism class in $\Delta^{\stable}_{g/2}$ of two isomorphic  $1$-pointed genus-$g/2$ curves meeting transversely
at their marked point; here the involution of the underlying stable curve
respects the multiplicity cochain, but does not lift to the level structure.
We also point out that in general, even when a lift $\iso$ exists, there may well be no canonical choice for $\iso$.
Lifting a morphism that maps a $\sta B\pmb\mu_k$-node to another $\sta B\pmb\mu_k$-node amounts
to extracting a $k$th root of the identifications between
local parameters on both branches (there may be no distinguished choice, although all choices
can be identified via
a ghost isomorphism, up to natural transformation).

\begin{example} \label{exa:stackytails}
We conclude this subsection with the study of automorphisms of the genus-one curve $\sta E=[\wt E/\pmmu_\ell]$,
stack quotient of a nodal cubic $\PP^1/(0\sim \infty)$, from Example
\ref{exa:level_on_elltails}.
Although the group of automorphisms of
$\sta E$ and of $E=\wt E/\pmmu_\ell$ is not finite ($E$ is not stable),
the study of this case is relevant to the study of  level curves over a stable curve
containing, as a subcurve, a copy of $\sta E$ meeting the rest of the curve at one
separating node $n$ (the orbit $\pmmu_\ell\cdot 1$) with
trivial stabiliser by Proposition \ref{pro:Misclosed}.
To this effect, it is crucial to study the finite group of automorphisms
of $\sta E$ that fix $n$
$$\aut(\sta E,n)=\{\sta s\in \aut(E)\mid \sta s(n)=n\}.$$
The exact sequence $1\to \aut_E(\sta E,n)\to \aut(\sta E,n)\to \aut(E,n)$ reads
$$1\to \pmmu_\ell \to \aut(\sta E, n) \xrightarrow{\ \mathfrak{coarse} \ }  \pmmu_2.$$
After choosing $\xi_\ell$, $\pmmu_\ell$ 
is generated by the automorphism $\sta g$
with coarsening 
$g=\id$ and local picture $(x,y)\mapsto (\xi_\ell x,y)$ at the node.
On the other hand, $\pmmu_2$ is generated 
by the unique involution $i$
fixing $n$ and the node, and interchanging the branches at the node.
In this special case, $\mathfrak{coarse}$
is surjective and   the involution $i$
admits a distinguished  lift $\sta i\in \aut[\wt E/\pmmu_\ell]$ as follows.
At the level of $\wt E$, consider the unique involution of
$\wt E$ fixing the node of $\wt E$ and the point $1$ and
exchanging the branches of the node.
At the level of the group $\pmmu_\ell$,
consider the passage to the inverse.
We obtain $\sta i\colon [\wt E/\pmmu_\ell]\to [\wt E/\pmmu_\ell]$ and we  have the short exact sequence\footnote{One 
can observe explicitly that $\aut(\sta E, n)$ 
is the direct product $\pmmu_\ell\times \pmmu_2$; \emph{i.e.}
the involution $\sta i$ commutes with the ghost $\sta g$ defined
locally at the node as $(x,y)\mapsto (\xi_\ell x,y)$. We only need 
to check 
$\sta g\circ \sta i=\sta i \circ \sta g$ 
at a local picture $[\{xy=0\}/\pmmu_\ell]$ 
at the node of $[\wt E/\pmmu_\ell]$. There,
the morphism $\sta i$ may be described as the map interchanging the branches $(x,y)\mapsto (y,x)$ and
$\sta i\circ \sta g\colon (x,y)\mapsto (\xi_{\ell} y,x)$
equals 
$\sta g\circ \sta i\colon (x,y)\mapsto (y,\xi_{\ell} x)$ up to the natural transformation $(x,y)\mapsto (\xi_{\ell} x,\xi_{\ell}^{-1}y)$.}
$$0\to \pmmu_\ell\to \aut(\sta E,n) \to \pmmu_2\to 0.$$

We now set $\ell=2$ and 
consider the automorphisms of an explicitly defined level-$2$ curve.
 Let $\ssC$ be a twisted curve,
union of $\sta E=[\wt E/\pmmu_2]$ with a smooth $(g-1)$-curve $X$ with
$\aut(X)=\{\id_X\}$. The curves $\sta E$ and $X$  meet
transversely at $n$ and the coarse spaces form a
genus-$g$ stable curve $C$.
Hence, by construction, the above short exact sequence reads $0\to \aut_C(\ssC)\to \aut(\ssC)\to \aut(C)\to 0.$
Let $(\sta C=X\cup \sta E, \ssL=\cO\cup(\ssL_{\ram}\otimes \ssL_{\etale})  ,\iso)$ be the unique level-$2$ curve
obtained by glueing over $n$
the fibre of $\cO_X$ and that of $\ssL_{\ram}\otimes \ssL_{\etale}$
from Example \ref{exa:level_on_elltails}.
By construction $\sta i^*$ operates trivially on both $\sta L_{\ram}$ and $\sta L_{\etale}$; therefore in this example
$\ul \aut(\ssC,\ssL,\iso)\to \aut(C)$ is surjective and $\aut'(C)=\aut(C)$.
On the other hand $\sta g^*$ acts trivially on $\ssL_{\etale}$ but nontrivially on $\ssL_{\ram}$
$$\sta g^*\ssL_{\ram}=\ssL_{\ram}\otimes \ssL_{\etale}$$
(this relation
can be shown directly, but we refer to \eqref{eq:twisters} for a
general rule). 
Notice that, in fact, there is a 
second level-$2$ 
curve $(\sta{C}, \sta{L}_0=\cO \cup \ssL_{\ram} , \sta s_0)$ 
which is isomorphic to $(\leveli,\levelii, \sta s)$ via $\sta g^*$, but 
$\sta L_0 \not \cong \ssL$.

We deduce that $\ul{\aut}_C(\leveli,\levelii, \sta s)$, 
in the example $(\leveli,\levelii, \sta s)$ given above, 
is trivial: there are no ghost automorphisms.
This is a consequence of the more general No-Ghosts Lemma \ref{lem:noqrefl_ghosts}.
The sequence \eqref{eq:ghostsandlifts} reads
$0\to 0\to \pmmu_2\to \pmmu_2\to 0$ and 
$\ul{\aut}(\leveli,\levelii, \sta s)=\pmmu_2$
operates nontrivially only on the parameter $\tau_n=t_n$ appearing in \eqref{eq:defolevel}
and corresponding to the family smoothing the node $n$
(the local picture is $\tau_n\mapsto -\tau_n$ because  $\sta i$ operates trivially on the $y$-branch lying on $X$ and operates
by a change of sign on the $x$-branch lying on the component $\sta E$, and $\tau_n$ equals $xy$).
In other words $\sta i$ fixes a hyperplane of 
$\defo(\leveli,\levelii, \sta s)$; \emph{i.e.}
$\sta i$ is a quasireflection.
\end{example}

\subsection{Dual graph and ghost automorphisms when the level is prime}\label{subsect:primel}
Only for this section the index $\ell$ is assumed to be prime.
Ghost automorphisms of the level curve $\level$ can be described in terms
of the dual graph $\Gamma$ of $\ssC$.
\subsubsection{Setup.}
Consider the characteristic function $\nu=\nu_M$ 
of the support of the multiplicity $M$ of $\level$ and 
the corresponding contraction $\Gamma\to \Gamma(\nu)$ (the condition $\nu>0$, or $\nu=\infty$, 
holds if and only if $M=0$ and singles out contracted edges, see
\eqref{eq:single-valued_characteristic}).
Recall $(C_\nu^\bullet(\Gamma; \pmmu_\ell), \delta)$
\begin{eqnarray}\label{eq:0cochainsupported}
&C^0_{\nu}(\Gamma; \pmmu_\ell)=
\{a\colon V\to \pmmu_\ell\mid a(e_+)=a(e_-) \text{ if $\nu(e)>0$} \},\\
\label{eq:1cochainsupported}
&C^1_{\nu}(\Gamma; \pmmu_\ell)=
\{b\colon \mbb E\to \pmmu_\ell \mid b(\ol e)=b(e)^{-1}, \text{ and } b(e)=1 \text{ if } \nu(e)>0 \}.\end{eqnarray}
By \eqref{eq:imcap} we have the  following identification via $\mathcal B$
$$\im\left(\delta\colon C^0(\Gamma(\nu);\pmmu_\ell)\to C^1(\Gamma(\nu);\pmmu_\ell)\right)\cong C^1_{\nu}(\Gamma;\pmmu_\ell)\cap \im \delta.$$

\subsubsection{Automorphisms of $\ssC$ via $\Gamma$ and $\nu$.}\label{subsubsect:autoACV} It is natural to define the 
group of symmetric 
$\pmmu_\ell$-valued functions vanishing
on the set of edges with zero multiplicity
$$S_{\nu}(\Gamma; \pmmu_\ell)=\{b\colon \mbb E\to \pmmu_\ell\mid b(\ol e)=b(e), \text{ and } b(e)=1 \text{ for } \nu(e)>0 \},$$
canonically isomorphic to $\bigoplus_{e\mid \nu(e)>0} \pmmu_\ell$.
As mentioned in \eqref{eq:ACVidentif}, the group $\aut_C(\ssC)$ is easy to describe by {\cite[\S7]{ACV}}.
For $\ell$ prime,  there is a canonical isomorphism
\begin{equation}\label{eqn:ACVcanoniciso}
\aut_C(\ssC)= S_{\nu}(\Gamma; \pmmu_\ell),\end{equation}
where
$\sta{a}\colon e\mapsto \sta a(e)\in \pmmu_\ell$ 
corresponds to $\sta a\in \aut_C(\ssC)$
acting at the node attached to $e\in E$ as
\begin{equation}\label{eq:localghostpicture}(x,y)\mapsto ({\sta a}(e)
x,y)\equiv(x,{{\sta a}(e)}y).\end{equation}

\subsubsection{Ghost automorphisms via $\Gamma$ and $\nu$.} We characterise ghost 
automorphisms of the level structure $\level$. To begin, 
recall that $\ell$ is prime, $M$ 
is the multiplicity of 
$\level$, and $\nu$ is equal to $\infty$ where $M$ vanishes and to $0$ elsewhere (see \eqref{eq:single-valued_characteristic}).
Via $\ZZ/\ell=\mathrm{Hom}(\pmmu_\ell,\mathbb G_m)$, 
we have the product
\begin{align}\label{defn:aM}
S_{\nu}(\Gamma; \pmmu_\ell)\times C_\nu^1(\Gamma; \ZZ/\ell)&
\rightarrow C_{\nu}^1(\Gamma; \pmmu_\ell);& ({\sta a},f)&
\mapsto \sta{a}\odot f:=f(\sta a).
\end{align}
Indeed, since $\sta a$ takes values in $\pmmu_\ell$ and 
$f$ in 
$\ZZ/\ell=\mathrm{Hom}(\pmmu_\ell,\mathbb G_m)$,
we express the result of the action of 
the automorphism $\sta a$ on $f$ by $\sta a\odot f := f(\sta a)$,
\emph{i.e.} by the evaluation at each edge $e$ of the homomorphism 
$f(e)$ at $\sta a(e)$.
This could be stated more explicitly: within $\pmmu_\ell$ we 
have 
$(\sta a \odot f)(e) = \sta a(e)^{f(e)}$. 
The notation $\sta{a}\odot f$ 
emphasises that $\sta a$ operates on $f$ and becomes 
convenient once we fix isomorphisms $\pmmu_r \cong \ZZ/r$ 
in the last 
part of the paper, see Assumption \ref{assu:roots}; 
then, $\sta{a}\odot f$ is actually a product in $\ZZ/\ell$
(see \eqref{eq:odotprod}).

Since 
$M$ lies in $C^1_\nu(\Gamma;\ZZ/\ell)$
we get the isomorphisms
\begin{equation}\label{eq:M-M-1lprime} 
M\colon S_{\nu}(\Gamma; \pmmu_\ell)
\rightarrow C_{\nu}^1(\Gamma; \pmmu_\ell)\qquad \text{and} 
\qquad
M^{-1}\colon C_{\nu}^1(\Gamma; \pmmu_\ell)
\rightarrow S_{\nu}(\Gamma; \pmmu_\ell)\end{equation} 
mapping ${\sta a}\in S_{\nu}(\Gamma; \pmmu_\ell)$ to 
$\sta{a}\odot M$, and, 
conversely, 
the $1$-cochain $b\colon e\mapsto b(e)$ of
$C_{\nu}(\Gamma; \pmmu_\ell)$
to the symmetric function $\sta a=M^{-1}b$
\begin{equation}\label{eq:M-1explicitlprime}
\sta a\colon e \mapsto \begin{cases} [M(e)^{-1}]_\ell( b(e))={\sta a}(e) &\text{ for $M(e)\neq 0$,}\\
                          1 & \text{ if $M(e)=0,$}
                         \end{cases}
                         \end{equation}
      (where $[M(e)^{-1}]_\ell$ is the inverse of $M(e)$ in $\ZZ/\ell$
      and is regarded as an invertible homomorphism applied 
      to $b(e)\in \pmmu_\ell$; again, this turns into a product 
      under Assumption \ref{assu:roots}).

Now, for any $\sta a\in \aut_C(\ssC)=S_{\nu}(\Gamma; \ZZ/\ell)$, we have (see \cite[Prop.~2.18]{Cstab})
\begin{equation}\label{eq:twisters}
\sta a^*\ssL\cong \ssL \otimes \tau(\sta{a}\odot M),
\end{equation}
where $\tau$ is the homomorphism 
defined in \S\ref{sssect:taudefn} associating to a 
$\pmmu_\ell$-valued $1$-cochain the  line bundle
with the corresponding descent data.
For completeness, we recall here the argument proving the above identity.
Let us write $\{xy=0\}$ for the local picture at a chosen node attached to the oriented edge
$e$ (as already observed
the choice of the notation $(x,y)$ yields $e\in \mbb E$).
Then, consider the pullback via the automorphism
$\sta a\colon (x,y)\mapsto (\xi_\ell x,y)$ of the line bundle
$\sta L$ defined by the action $\xi_\ell\cdot (x,y,t)=(\xi_\ell x,\xi_\ell^{-1}y,\xi_\ell t)$
on $\{xy=0\}\times \mathbb A^1$ locally at the chosen node
and trivial elsewhere. This definition of $\sta L$ makes sense because the
quotient is canonically trivialised off the node by the invariant sections
$xt^{-1}$ on one branch and by
$yt$ on the other branch. Pulling back via $\sta a$ changes the trivialisation
only at one branch; in other words, by \eqref{eq:Picsequence}, it is equivalent to tensoring by $ \tau(\sta{a}\odot M)$.

The above statement implies (via \eqref{eq:Picsequence}) that $\sta a$ is a ghost if
and only if $\sta{a}\odot M$ lies in $\ker \tau=\im \delta$. This completely justifies the following notation.
\begin{definition}\label{defn:ghostgroup}
Set
$G_{\nu}(\Gamma;\pmmu_\ell)=C^1_{\nu}(\Gamma;\pmmu_\ell)\cap \im \delta. $
\end{definition}
\begin{remark}\label{rem:ghostgroup}
Via the contraction  $\Gamma\to \Gamma(\nu)$ and \eqref{eq:imcap},
we get the alternative presentation
\begin{equation}\label{eq:GMwithGammaM}
G_{\nu}(\Gamma;\pmmu_\ell)=\im\left(\delta\colon C^0(\Gamma(\nu);\pmmu_\ell)\to C^1(\Gamma(\nu);\pmmu_\ell)\right)
\end{equation}
yielding the isomorphism
$G_{\nu}(\Gamma;\ZZ/\ell)=(\pmmu_\ell)^{\oplus (\# V(\nu)-1)}$.
\end{remark}
\begin{proposition}\label{pro:ghosts}
For $\ell$ prime, let $\level$ be a  level-$\ell$ curve.
We have a canonical identification
$$\ul\aut_C\level\cong G_{\nu}(\Gamma;\pmmu_\ell).$$
A $1$-cochain $b\colon e\mapsto b(e)$ of
$G_{\nu}(\Gamma; \ZZ/\ell)$
corresponds,
to the symmetric function $M^{-1}b$.
%
\qed \end{proposition}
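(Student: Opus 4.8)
The plan is to assemble the proposition from the ingredients already set up in \S\ref{subsect:primel}, the only real content being to check that the abstract identifications there actually compute $\ul\aut_C\level$. First I would recall the canonical isomorphism $\aut_C(\ssC)=S_{\nu}(\Gamma;\pmmu_\ell)$ from \eqref{eqn:ACVcanoniciso} (valid since $\ell$ is prime, so that $r(e)=\ell$ at every twisted node, and no twisting at all at the edges where $M$ vanishes, \emph{i.e.} where $\nu(e)>0$). By definition $\ul\aut_C\level=\{\sta a\in\aut_C(\ssC)\mid \sta a^*\ssL\cong \ssL\}$, so I must determine, for $\sta a\in S_\nu(\Gamma;\pmmu_\ell)$, when $\sta a^*\ssL\cong \ssL$ in $\Pic(\ssC)$.

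The key computational input is formula \eqref{eq:twisters}, $\sta a^*\ssL\cong \ssL\otimes \tau(\sta a M)$, together with its local justification already reproduced in the excerpt: pulling back the twisting line bundle $\tau$ attached to the edge $e$ by the ghost $(x,y)\mapsto(\xi_\ell x,y)$ changes the glueing datum on exactly one branch, which by the definition of $\tau$ from the exact sequence \eqref{eq:Picsequence} amounts to tensoring by $\tau(\sta a M)$. Hence $\sta a\in\ul\aut_C\level$ if and only if $\tau(\sta a M)\cong\cO$, \emph{i.e.} if and only if $\sta a M\in\ker\tau$. By the exact sequence \eqref{eq:Picsequence}, $\ker\tau=\im\delta$, so the condition reads $\sta a M\in \im\delta$. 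Now I would invoke the two mutually inverse isomorphisms $M\colon S_\nu(\Gamma;\pmmu_\ell)\xrightarrow{\sim} C^1_\nu(\Gamma;\pmmu_\ell)$ and $M^{-1}$ of \eqref{eq:M-M-1lprime}: these are well defined precisely because $M(e)$ is invertible in $\ZZ/\ell$ for every $e$ with $\nu(e)=0$ (this is where primality of $\ell$ enters decisively), and the explicit formula \eqref{eq:M-1explicitlprime} shows $M^{-1}$ is the stated ``$e\mapsto [M(e)^{-1}]_\ell(b(e))$'' map. Under $M$, the subgroup $\{\sta a\mid \sta a M\in\im\delta\}$ of $S_\nu(\Gamma;\pmmu_\ell)$ is carried isomorphically onto $C^1_\nu(\Gamma;\pmmu_\ell)\cap\im\delta=G_\nu(\Gamma;\pmmu_\ell)$ (Definition \ref{defn:ghostgroup}); equivalently $\ul\aut_C\level=M^{-1}\bigl(G_\nu(\Gamma;\pmmu_\ell)\bigr)$, which is exactly the asserted correspondence $b\mapsto M^{-1}b$.

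The identification is canonical because every arrow in the chain — the ACV isomorphism \eqref{eqn:ACVcanoniciso}, the description of $\tau$ via \eqref{eq:Picsequence}, and the pairing-induced maps $M,M^{-1}$ — is canonical once the multiplicity cochain $M$ (itself canonically attached to $\level$ by Definition \ref{defn:mult_twisted}) is fixed; note in particular that, by Remark \ref{rem:ghostgroup}, $G_\nu(\Gamma;\pmmu_\ell)=\im\bigl(\delta\colon C^0(\Gamma(\nu);\pmmu_\ell)\to C^1(\Gamma(\nu);\pmmu_\ell)\bigr)$, so the group does not depend on the chosen orientation of the edges. The main obstacle I anticipate is not any single step but making the bookkeeping around $\tau$ airtight: one must be careful that the sign/branch conventions in \eqref{eq:Picsequence} and in the local picture $(x,y)\mapsto(\xi_\ell x,y)\equiv(x,\xi_\ell y)$ match, so that ``$\sta a^*\ssL\otimes\ssL^{-1}$'' is genuinely $\tau(\sta a M)$ and not $\tau(\sta a M^{-1})$ or $\tau(\sta a)^{\pm1}$; since $\tau$ is only nonzero on edges with $\nu(e)=0$, where $M(e)$ is a unit, this ambiguity is harmless for the \emph{group} $\ul\aut_C\level$ but matters for the precise statement ``$b$ corresponds to $M^{-1}b$,'' and that is the point to verify with care.
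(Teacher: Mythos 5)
Your proposal is correct and follows exactly the route the paper itself takes: the ACV isomorphism \eqref{eqn:ACVcanoniciso}, the key formula \eqref{eq:twisters} $\sta a^*\ssL\cong\ssL\otimes\tau(\sta a M)$, the identification $\ker\tau=\im\delta$ from \eqref{eq:Picsequence}, and the transport along $M$, $M^{-1}$. The paper in fact distributes this argument through the paragraphs immediately preceding the proposition and then simply records the conclusion, so your write-up is essentially a cleaned-up version of the paper's own proof.
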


\begin{remark}\label{rem:fixingirredcomp_prime}
As an easy consequence of the above analysis
a ghost automorphisms $\sta a \in  \ul\aut_C\level$
fixes every irreducible component $\sta Z\subseteq \ssC$.
Indeed, the restriction of
$\sta a$ may operate nontrivially only at the nodes of $\sta Z$. These are represented by loops in the dual graph.
Indeed $G_\nu(\Gamma;\pmmu_\ell)$ is not supported on the loops (cuts are supported off the loops)
\end{remark}

\begin{example}\label{exa:circuit}
Let us assume $\ell=3$. Consider the case where the dual graph is formed by a
single circuit $K$ consisting of $n$ edges.
In this case $\ker\partial\cong\ZZ/3=\langle K\rangle$.
There are two possibilities: $M=0$, where 
$\ul{\aut}_C\level=1$, and $M\neq 0$, where 
$C^1_{\nu}(\Gamma;\pmmu_3)=C^1(\Gamma;\pmmu_3)$
and the group of ghosts 
$G_\nu(\Gamma;\pmmu_\ell)$ is isomorphic to 
$\im \delta\cong (\pmmu_\ell)^{\oplus (\#V-1)}=
(\pmmu_3)^{\oplus n-1}$.
The elements of $\ul{\aut}_C\level$ are the
functions $\sta a\in S_{\nu}(\Gamma;\pmmu_3)$
such that $\sta{a}\odot M(K)=1$, see
\eqref{eq:imandcirc}.
\begin{enumerate}
 \item[(i)] \label{constanta} Assume $n=3$.
In this case $M$ lies in $\im \delta$ and
we get an element of $G_{\nu}(\Gamma;\pmmu_3)$ by taking
$\sta a\colon \mbb E\to \pmmu_3$ constant.
In order 
to fix ideas let us fix a primitive third root of unity $\xi_3$ and 
set $\sta a$ constant and equal to $\xi_3$.
Then $\sta a$ is a ghost 
operating as $(x,y)\mapsto (\xi_3 x,y)$ at all nodes
and acting on $\defo\level$ as $(\xi_3\mathbb I_3)\oplus \id$
(see \eqref{eq:defolevel}).
This argument holds in general whenever $M$ is in $\im\delta_{\ZZ/\ell}$;
then, for $a(e)=\zeta$ all $e$, we have 
$\sta a\odot M(e)=\zeta^{M(e)}$ all $e$, 
and, by \eqref{eq:imandcirc}, 
the $\ZZ/\ell$-valued $1$-cochain $M\in \im\delta_{\ZZ/\ell}$
yields 
a $\pmmu_\ell$-valued 
$1$-cochain $\sta{a}\odot M\in \im\delta_{\pmmu_\ell}$.

\item[(ii)] Assume $n=2$, let $e_1$ and $e_2$ be the two edges.
Here $M=0$ or $M\not\in\im\delta$.
Again by choosing $\xi_3$, we 
define a symmetric function $\sta a\colon \mbb E\to \ZZ/\ell$ 
mapping one edge to $\xi_3$
($e_1, \ol e_1\mapsto \xi_3$)
and  the other to its inverse $\xi_3^2$ 
($e_2, \ol e_2\mapsto \xi_3^2$); then $\sta{a}\odot M$ is a cut, lies in $\im \delta$ 
and
acts
on  \eqref{eq:defolevel} as $\diag(\xi_3,\xi_3^2)\oplus \id$.
\item[(iii)] If the circuit has a single edge, then $\im\delta=(0)$. There are no nontrivial ghosts.
\end{enumerate}
\end{example}

\begin{example} The argument at point (iii) 
shows that the level structures 
$\cO\cup (\ssL_{\ram}\otimes \ssL_{\etale})$ 
and $\cO\cup \ssL_{\ram}$ introduced in Example \ref{exa:stackytails}
have no nontrivial ghosts. Indeed, the dual graph in that case has two vertices $v_X$ and $v_{\sta E}$
corresponding to $X$ and $\sta E$, one edge $e_n$ connecting them and corresponding to
the node $n$ and a second edge $e_{\looop}$ with both ends on $v_{\sta E}$. The multiplicity is
supported on this last vertex, and the vertex set $V(\nu)$ of the
graph $\Gamma(\nu)$ obtained by contracting
all edges with vanishing multiplicity  reduces to  a single vertex.
We have $\im(\delta\colon C^0(\Gamma(\nu);\pmmu_\ell)\to C^1(\Gamma(\nu);\pmmu_\ell))\cong (\pmmu_\ell)^{\oplus \#V(\nu)-1}=1.$
Notice that this argument holds for any tree-like graph (that is a
graph that becomes a tree once the loops are removed), see Corollary \ref{cor:noghosts}.
\end{example}

\begin{example} \label{exa:vine} Consider a dual graph with two vertices $v_1,v_2$ and three
edges, each of them linking the two  vertices to each other. As a multiplicity cochain
we choose $e\mapsto M(e)$ equal to $1$ on the oriented edges of $\mbb E$ oriented from $v_1$ to $v_2$.
For $\ell=3$,
it is easy to check
that $M$ belongs to the kernel and is indeed the
sum of two different
two-edged circuits. The cochain $M$ lies
also in $\im\delta$ (it is  the
$\pmmu_3$-valued cut attached to the proper nonempty subset  $H=\{v_1\}$).
Therefore a constant 
$\sta a\equiv \zeta\in \pmmu_3 \in S_{\nu}(\Gamma;\pmmu_3)$
satisfies $\sta{a}\odot M\in \im \delta$ and 
acts on $\defo\level$ as $\zeta\mathbb I_3\oplus \id$. (See also 
Example 
\ref{exa:circuit},(i).)
\end{example}

\subsection{The singular points of the moduli space}
Notice that, in all the above examples of ghost automorphisms $\sta g\in \ul \aut\level$,
the fixed space $\{v\in \defo(\level)\mid \sta g\cdot v=v\}$ is
never a hyperplane. An automorphism of an affine space whose fixed space
coincides with an hyperplane is called a \emph{quasireflection}.
A general property of nontrivial ghosts is that they never act as quasireflections. Let us recall that this is crucial for classifying singularities.

\begin{fact} \label{fact:singularities}
The scheme-theoretic quotient
$\defo\level/\aut\level$ is smooth
if and only if $\aut\level$ is spanned by elements acting as the identity or as
quasireflections (see \cite{Prill}).
\end{fact}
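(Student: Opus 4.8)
The plan is to read the Fact as the local form of the Chevalley--Shephard--Todd theorem, in Prill's version \cite{Prill}, and to reduce to it after passing to the effective linear action. First I would fix notation: set $V:=\defo\level=\defo(\ssC)$, which is a vector space of dimension $3g-3$ (this is the smoothness of the stack $\ol{\ssR}_{g,\ell}$, and it is also visible from the decomposition \eqref{eq:defolevel}), carrying a $\CC$-linear action of the finite group $\aut\level$. Let $K\trianglelefteq\aut\level$ be the normal subgroup of elements acting as the identity on $V$; it contains the quasitrivial automorphisms $\pmmu_\ell$ and a priori may be strictly larger. Since $K$ acts trivially we have $\CC[V]^{\aut\level}=\CC[V]^{\bar G}$ for $\bar G:=\aut\level/K$, so the scheme-theoretic quotients coincide, $\defo\level/\aut\level=V/\bar G$, with $\bar G$ now a finite subgroup of $\GL(V)$ acting faithfully.

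Next I would apply Prill's theorem: for a finite subgroup $\bar G\subset\GL(V)$, the quotient $V/\bar G$ is smooth at the image of the origin if and only if $\bar G$ is generated by quasireflections (the underlying mechanism being that the normal subgroup of $\bar G$ generated by quasireflections already has linear, hence smooth, quotient, while the residual ``small'' group has smooth quotient only if it is trivial). This transfers to the scheme-theoretic quotient because singularity of $V/\bar G$ at the point represented by $\level$ may be tested on its completed local ring, which is the ring of $\bar G$-invariant formal power series, and is therefore governed by Prill's criterion (equivalently, by the formal Chevalley--Shephard--Todd theorem, available in characteristic zero). It then remains to translate ``$\bar G$ generated by quasireflections'' back into a condition on $\aut\level$: an element $g\in\aut\level$ acts on $V$ as a quasireflection exactly when its image in $\bar G$ does, every quasireflection of $\bar G$ is the image of such a $g$, and $K$ is precisely the set of elements acting as the identity; hence $\bar G$ is generated by quasireflections if and only if $\aut\level$ is generated by $K$ together with the elements acting as quasireflections, that is, by elements acting as the identity or as quasireflections. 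Putting the two steps together, $\defo\level/\aut\level$ is non-singular precisely when $\aut\level$ is spanned by elements acting as the identity or as quasireflections, and singular precisely in the complementary case; this is the criterion of the Fact.

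I do not expect a substantial obstacle here: at bottom the Fact is Prill's theorem together with the elementary remark that identity-acting automorphisms may be discarded, so the content lies in carrying out the two reductions correctly, namely the identification $\defo\level/\aut\level=V/\bar G$ of schemes and the exact matching of quasireflections in $\aut\level$ with those in $\bar G$. The one point deserving care is the passage from Prill's statement, which is phrased for complex-analytic quotients, to the scheme-theoretic quotient, and this is handled by descending to completed local rings. It is also worth recording that $K$ can genuinely be larger than $\pmmu_\ell$ --- for instance a hyperelliptic involution of the underlying curve acts trivially on $\defo$ --- which is exactly why the hypothesis must be phrased in terms of elements ``acting as the identity'' rather than in terms of quasitrivial automorphisms.
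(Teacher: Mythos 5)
Your argument is correct and is exactly the elaboration of the citation to Prill that the paper leaves implicit: the paper offers no proof of its own, so there is nothing to compare against beyond the bare reference to \cite{Prill}, and the two-step reduction you give (pass to the effective quotient $\bar G=\aut\level/K$, then invoke Prill on completed local rings) is the right way to spell it out. Two small corrections are in order, though. First, the Fact as printed has the logic inverted: your derivation (and the way the Fact is actually applied in the proof of Theorem \ref{thm:smooth}) gives smoothness precisely when $\aut\level$ is spanned by identity/quasireflection elements, whereas the printed statement says ``singular if and only if spanned,'' and you silently fix this in your last sentence while still calling it ``the criterion of the Fact''; it would be cleaner to flag the typo explicitly. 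Second, your closing example is wrong in the regime where the paper works: for $g\ge 3$ the hyperelliptic involution acts on $\defo(C)\cong H^0(C,\omega_C^{\otimes 2})^\vee$ with a $(-1)$-eigenspace of dimension $g-2>0$, so it does \emph{not} act trivially, and in fact under the paper's standing assumption $g\ge 4$ one has $K=\pmmu_\ell$ exactly, by the faithfulness statement in Remark \ref{rem:faithful}. Neither issue affects the validity of the main reduction, since that argument does not depend on the size of $K$.
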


\subsubsection{Nontrivial ghosts are not quasireflections.}
Here is a consequence of Proposition \ref{pro:ghosts}.
\begin{lemma}\label{lem:noqrefl_ghosts}
If $\sta a\in \ul \aut_C \level$ fixes a hyperplane of 
$\defo\level$, then $\sta a=\id_{\ssC}$.
\end{lemma}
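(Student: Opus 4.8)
The statement to establish is Lemma \ref{lem:noqrefl_ghosts}: a nontrivial ghost automorphism $\sta a\in\ul\aut_C\level$ cannot fix a hyperplane in $\defo\level$. The plan is to use the explicit description of the ghost action furnished by Proposition \ref{pro:ghosts} together with the decomposition \eqref{eq:defolevel} of $\defo\level$. Recall that $\defo\level=\bigoplus_{v\in V}H^1(C_v',T(-D_v))$; a ghost $\sta a$ acts trivially on each summand $H^1(C_v',T(-D_v))$ (it fixes every irreducible component, cf. Remark \ref{rem:fixingirredcomp_prime}, and it fixes the coarse curve), so the only part of the deformation space on which $\sta a$ can act nontrivially is the quotient $\bigoplus_{e\in E}K_e$ of \eqref{eq:defomodtopolpres}, i.e. the smoothing parameters $\tau_e$ of the nodes. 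Concretely, via \eqref{eqn:ACVcanoniciso} and \eqref{eq:localghostpicture}, $\sta a$ acts on $K_e$ by the character $\sta a(e)\in\pmmu_{r(e)}$; equivalently, writing $\sta a=M^{-1}b$ with $b\in G_\nu(\Gamma;\pmmu_\ell)\subseteq\im\delta$ as in Proposition \ref{pro:ghosts}, the eigenvalue on $K_e$ is $[M(e)^{-1}]_\ell(b(e))$.

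\medskip

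First I would reduce to the action on the $K_e$: since $\sta a$ is the identity on the complementary summand $\defo(\leveli,\levelii,\leveliii,\Sing(\leveli))$, the fixed locus of $\sta a$ in $\defo\level$ contains that entire subspace, so $\sta a$ fixes a hyperplane if and only if it fixes a hyperplane of $\bigoplus_{e\in E}K_e$, which (since $\sta a$ acts diagonally there) happens if and only if $\sta a$ acts nontrivially on exactly one coordinate $K_{e_0}$ and trivially on all others. So the whole lemma comes down to showing: if $b=\sta a M\in G_\nu(\Gamma;\pmmu_\ell)$ is supported (as a character-valued function on edges) on a single non-loop edge $e_0$, then $b$ is trivial. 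Here "supported on $e_0$" means $b(e)=1$ for every edge $e\neq e_0$, and $\sta a(e_0)\neq 1$.

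\medskip

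The key step is then purely combinatorial and uses $b\in\im\delta$. By Remark \ref{rem:circuits}, equation \eqref{eq:imandcirc}, a cochain $b\in C^1(\Gamma;\pmmu_\ell)$ lies in $\im\delta$ if and only if $b(K)=1$ for every circuit $K$ of $\Gamma$. If $b$ were supported only on a single non-loop edge $e_0$ with $b(e_0)\neq 1$, then $e_0$ is not a separating edge of $\Gamma$ (Proposition \ref{pro:Misclosed} forces $M$, hence a nontrivial $\sta a$, to be supported off separating edges, and in fact an edge on which a ghost acts nontrivially lies on a circuit since $G_\nu$ is contained in $\im\delta$, whose support avoids bridges). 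Hence $e_0$ lies on some circuit $K$ of $\Gamma$; evaluating, $b(K)=b(e_0)\neq 1$, contradicting \eqref{eq:imandcirc}. Therefore no such $b$ exists, so $\sta a$ acts nontrivially on at least two of the $K_e$ (or not at all), its fixed locus in $\bigoplus K_e$ has codimension $\geq 2$, and $\sta a$ cannot be a quasireflection unless $b$ is trivial — i.e. unless $\sta a=\id_\ssC$ by Proposition \ref{pro:ghosts}.

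\medskip

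The main obstacle I anticipate is not the circuit argument itself but the careful bookkeeping needed to be sure that the $K_e$ are genuinely \emph{independent} coordinate directions and that $\sta a$ really is diagonal with eigenvalue $\sta a(e)$ on $K_e$ — this is exactly what \eqref{eq:defomodtopolpres}, \eqref{eq:ACVidentif} and the description in \S\ref{subsubsect:ghostautom} of the $\pmmu_{r(e)}$-action on $K_e$ provide, so it should reduce to citing those. One subtlety to flag explicitly: an edge $e$ with $M(e)=0$ has $r(e)=1$, so $K_e$ carries only the trivial $\pmmu_1$-action and $\sta a$ automatically fixes it; the genuinely movable coordinates are the $K_e$ with $M(e)\neq 0$, i.e. the edges in the contracted graph $\Gamma(\nu)$, and the circuit one needs must be found already inside $\Gamma(\nu)$ (which is legitimate since contractions of circuits are circuits, \eqref{eq:inclusions}). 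Once this is set up the proof is a one-line application of \eqref{eq:imandcirc}.
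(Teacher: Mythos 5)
Your proof is correct and takes the same route as the paper's: reduce to the diagonal action of $\sta a$ on the smoothing coordinates $K_e$, and then apply the circuit criterion \eqref{eq:imandcirc} to conclude that a nontrivial cochain $b=\sta a M\in\im\delta$ cannot be supported on a single edge (up to orientation). One small slip worth flagging: the parenthetical claim that ``$\im\delta$'s support avoids bridges'' is false in general — a cut can certainly be supported on a separating edge (take a two-edge path graph and $W$ a single endpoint); the correct reason $b$ vanishes on separating edges, which you already give immediately before, is that $b\in C^1_\nu$ and separating edges have $M(e)=0$ by Proposition~\ref{pro:Misclosed}. Also, the restriction to a non-loop $e_0$ is unnecessary: if $e_0$ were a loop, the one-edge circuit $K=\{e_0\}$ yields $b(e_0)=b(K)=1$ directly, so the same circuit argument disposes of that case.
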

As we 
argue in Remark \ref{rem:generalisations}, 
this lemma generalises word by word and 
straightforwardly to the case when $\ell$ is composite;
so, we did not impose the condition on $\ell$ to be 
prime in the statement. 

\begin{proof}
Let $b$ be a nontrivial ghost $G_\nu(\Gamma;\pmmu_\ell)$; \emph{i.e.}
a $1$-cochain $b\in C_{\nu}^1(\Gamma; \pmmu_\ell)$ lying in $\im \delta$.
Then 
there exists a (nonseparating) edge $e$ with $\nu(e)\neq \infty$ 
and $b(e)\neq 1$. In this case,
there is a circuit $K$ passing through $e$. 
Now, $K$ satisfies 
$b(K)=1$  by \eqref{eq:imandcirc}.
Hence, the support of $b$ contains an oriented edge $e'$ which differs from $e$ regardless of its orientation.
Proposition \ref{pro:ghosts} claims that 
the unique automorphism $\sta a$ such that $\sta a \odot M=b$ 
acts nontrivially on $\mathbb A^1_{\tau_e}$ and $\mathbb A^1_{\tau_{e'}}$.
\end{proof}

\begin{remark}[$\ul{\aut}\level$ operates faithfully on $\defo\level$]\label{rem:faithful}
Under the assumption $g\ge 4$,
any nontrivial automorphism $a\in \aut(C)$
acts nontrivially on $\defo(C)$, see \cite{HM}.
Then, the faithfulness of $\ul{\aut}\level$ follows from that of $\ul \aut_C\level$ and from
the above lemma.
\end{remark}

\subsubsection{Elliptic tail involutions.}
In \cite[Thm.2, \S2]{HM}, Harris and Mumford prove
that an automorphism $a\in \aut(C)$
is a quasireflection of $\defo(C)$
if and only if $a$ is an \emph{elliptic tail involution} (ETI):
the curve $C$ contains a genus-$1$
subcurve $E$ meeting the rest of the curve at a single point $n$ and
$a$ is the identity on $\ol{C\setminus E}$
and is the nontrivial canonical involution $i$ of $\aut(E,n)$. This involution
is canonically identified both if $E$ is elliptic or rational: it is the hyperelliptic involution in the first case
whereas, in the second case, it is the unique involution
fixing the point $n$ and the node of $E\cong \PP^1/(0\equiv\infty)$ and interchanging the branches of such node.
We need to generalise to twisted curves the notion of ETI.
Because all separating nodes of  level-$\ell$ curves have trivial stabilisers,  a
genus-$1$ subcurve $\sta E$ meeting the rest of the curve at a single point $n$
is either a scheme $(E,n)$ or is isomorphic to the pointed
stack-theoretic curve $(\sta E,n)$
of Example \ref{exa:level_on_elltails}.
In both cases, these tails are
equipped with a canonical involution  $\sta i$.
\begin{definition}[elliptic tail and ETI]\label{defn:elltail}
Let $\level$ be a level-$\ell$ twisted curve. An elliptic tail
is a genus-one subcurve $\sta E$ meeting the rest of the curve $\ssC$ at a single point.
An elliptic tail involution of $\level$ is
an automorphism of $\ssC$ such that the restriction to $\ol {\ssC\setminus\sta E}$ is the identity
and the restriction to $\sta E$
equals the canonical involution $\sta i$ and satisfies $\sta i^*(\sta L\!\mid_{\sta E})=\sta L\!\mid_{\sta E}.$
\end{definition}

\begin{proposition} \label{pro:quasireflections}
Consider a stable genus-$g$ level-$\ell$ curve with $g\ge 4$.
An automorphism $\sta s\in \ul{\aut}\level$ acts as a quasireflection on $\defo\level$ if and only if it is an
ETI.
\end{proposition}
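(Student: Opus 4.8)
The plan is to decompose any $\sta s\in\ul\aut\level$ using the exact sequence \eqref{eq:ghostsandlifts}, $1\to\ul\aut_C\level\to\ul\aut\level\to\aut'(C)\to 1$, and to analyse separately the two ingredients: the ghost part and the coarsening $s\in\aut(C)$. First I would reduce to the two extreme cases. If $\sta s$ is a ghost, Lemma \ref{lem:noqrefl_ghosts} already tells us it fixes a hyperplane only if $\sta s=\id$, so no nontrivial ghost is a quasireflection; in particular an ETI on a twisted curve with trivial stabilizer on the tail is just the classical ETI, and one must check directly that the stack-theoretic ETI $\sta i$ from Example \ref{exa:level_on_elltails}–\ref{exa:stackytails} is still a quasireflection (this is essentially done in Example \ref{exa:stackytails}: $\sta i$ acts as $\tau_n\mapsto-\tau_n$ on the one parameter smoothing the tail node and trivially on the rest of $\defo\level=\bigoplus_v H^1(C_v',T(-D_v))$, because $\sta i^*$ is trivial on $\sta L|_{\sta E}$ by the definition of ETI, and on $\sta E$ itself the canonical involution acts with a fixed hyperplane on $H^1(E',T(-D_E))$).

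Next, for the forward direction, suppose $\sta s$ acts as a quasireflection on $\defo\level$. Using the canonical decomposition \eqref{eq:defolevel} and \eqref{eq:defomodtopolpres}, the action of $\sta s$ respects the filtration by $\defo(\sta C,\Sing(\sta C))$, so $\sta s$ acts as a quasireflection either on the "interior" part $\bigoplus_v H^1(C_v',T(-D_v))$ or on the "nodal" part $\bigoplus_e K_e$ (it cannot be nontrivial on both while fixing a hyperplane, since each summand that moves contributes at least one non-fixed coordinate). If $\sta s$ moves only the $K_e$'s, then in particular $\sta s$ fixes every geometric point and preserves the topological type, so $\sta s$ lies in a group built from $\aut_C(\sta C)$; combining with the ghost analysis and the fact (Remark \ref{rem:faithful}) that $\ul\aut\level$ acts faithfully, one shows $\sta s$ must be a ghost, hence trivial by Lemma \ref{lem:noqrefl_ghosts} — contradiction unless $\sta s=\id$. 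So the quasireflection acts as a quasireflection on $\bigoplus_v H^1(C_v',T(-D_v))=\defo(C,\Sing C)\subseteq\defo(C)$, and its coarsening $s\in\aut(C)$ is a quasireflection of $\defo(C)$. By Harris–Mumford \cite[Thm.~2, \S2]{HM}, $s$ is a classical ETI, supported on an elliptic tail $E\subseteq C$ meeting the rest at one node $n$ with $s|_{\overline{C\setminus E}}=\id$ and $s|_E$ the canonical involution; by Proposition \ref{pro:Misclosed} the separating node $n$ has trivial stabilizer, so the subcurve over $E$ in $\sta C$ is either $(E,n)$ or $(\sta E,n)$ as in Example \ref{exa:level_on_elltails}.

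The last step is to upgrade $s$ to the automorphism $\sta s$ of the level curve and verify the bundle condition $\sta i^*(\sta L|_{\sta E})=\sta L|_{\sta E}$ of Definition \ref{defn:elltail}, i.e. that the ETI actually lifts to $\level$. Since $\sta s\in\ul\aut\level$ we know $\sta s^*\sta L\cong\sta L$ globally; restricting to the tail and using that $\sta L|_{\overline{\sta C\setminus\sta E}}$ is fixed (the coarsening is the identity there and there is no monodromy since the ambient graph-cohomology contribution is pulled back — or more simply, $\Pic^0$ of an involution-fixed curve argument), one deduces $\sta s^*(\sta L|_{\sta E})\cong\sta L|_{\sta E}$. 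Here I expect the genuinely delicate point: on the stacky tail $(\sta E,n)$ the canonical involution $\sta i$ is only well-defined up to composition with the ghost $\sta g$ supported at the node of $\sta E$ (see the footnote in Example \ref{exa:stackytails}), so a priori $\sta s|_{\sta E}$ could be $\sta i$ or $\sta i\circ\sta g^k$; one must show that the choices differing by a ghost either also satisfy the ETI condition or fail to be quasireflections. This is controlled by \eqref{eq:twisters}: the ghost $\sta g$ twists $\sta L_{\ram}$ by $\sta L_{\etale}$, so $\sta i\circ\sta g$ can only fix $\sta L|_{\sta E}$ when the relevant cut-space contribution is trivial, which — because the tail attaches by a single separating edge — it is (tree-like graph, Corollary \ref{cor:noghosts}); hence the lift is essentially unique and is an ETI in the sense of Definition \ref{defn:elltail}, and conversely any such ETI is a quasireflection by the computation of Example \ref{exa:stackytails}. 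The main obstacle is thus disentangling the ghost ambiguity in the lift of the classical ETI to the twisted tail and confirming the bundle-fixing condition; everything else is the Harris–Mumford result plus the linear-algebra bookkeeping of \eqref{eq:defolevel}.
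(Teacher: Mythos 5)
There is a genuine gap in the forward direction of your argument, at the step where you split the quasireflection $\sta s$ according to whether it acts on the interior part $\bigoplus_v H^1(C_v', T(-D_v))$ or the nodal part $\bigoplus_e K_e$ of $\defo\level$. You claim that if $\sta s$ moves only the $K_e$'s, then $\sta s$ fixes every geometric point of $\sta C$ and is therefore a ghost, hence trivial. This inference is false, and ETIs themselves are the counterexample: as you in fact note while discussing the converse and Example \ref{exa:stackytails}, an ETI flips exactly one smoothing parameter $\tau_n$ and acts trivially on every $H^1(C_v',T(-D_v))$ (the canonical involution preserves the modulus of any one-pointed genus-one tail), yet its coarsening is not $\id_C$. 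So your case distinction would wrongly rule out the very automorphisms the proposition is about, while the remaining case (quasireflection concentrated on the interior part) is in fact vacuous by Harris--Mumford. The paper sidesteps this by a much simpler observation: the equivariant morphism $\defo\level\to\defo(C)$ is the identity on $\defo(C,\Sing C)$ and the power map $\tau_e\mapsto\tau_e^{r(e)}$ on each $K_e$, so the coarsening $s$ of a quasireflection of $\defo\level$ is automatically either the identity or a quasireflection of $\defo(C)$, with no need to locate where the flipped coordinate lives. The identity case is then eliminated by Lemma \ref{lem:noqrefl_ghosts}, and the quasireflection case gives an ETI $s\in\aut(C)$ by \cite{HM}.

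The last step of your proposal, controlling the ghost ambiguity of the lift, is also somewhat garbled (the appeal to a ``tree-like graph'' and Corollary \ref{cor:noghosts} misses the point, since $\ul\aut_C\level$ for the whole curve can well be nontrivial). What is actually needed is shorter: every element of $\ul\aut\level$ lifting the ETI $i$ is $\sta i\circ\sta g$ with $\sta g\in\ul\aut_C\level$; a nontrivial such $\sta g$ acts on coordinates $\tau_e$ attached to edges with nontrivial stabilisers, hence disjoint from the separating parameter $\tau_n$, and it fixes no hyperplane by Lemma \ref{lem:noqrefl_ghosts}, so $\sta i\circ\sta g$ moves at least three coordinates and is never a quasireflection. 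Your treatment of the converse direction via Example \ref{exa:stackytails} is fine.
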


As we 
argue in Remark \ref{rem:generalisations}, 
this proposition also 
generalises immediately to the case when $\ell$ is composite.

\begin{proof}
Let $\iso$ be an automorphism
of $\level$ acting as a quasireflection on $\defo\level$.
Then, its coarsening $s$ acts either as the identity or as a quasireflection
on $\defo(C)$. We rule out $s=\id_C$: in this
case $\iso$ would be a ghost, and, by Lemma \ref{lem:noqrefl_ghosts}, there is no
ghost acting as quasireflection. Then, by \cite{HM}, 
$s$ operates as an ETI on $C$. If
the elliptic tail is represented by a scheme,
then $\sta s$ is an ETI (using Lemma \ref{lem:noqrefl_ghosts} on $\ol{C\setminus \sta E}$).
Otherwise, the elliptic tail  is the curve $\sta E$
of Example \ref{exa:level_on_elltails} and we need
to check that $\sta i$ is the only automorphism lifting the ETI $i$
and operating as a quasireflection on $\defo\level$. By \eqref{eq:ghostsandlifts}
the remaining automorphisms are of the form
$\sta i\circ \sta g^{n}$ with $\sta g ^n\neq \id$ (using the notation of Example \ref{exa:level_on_elltails});
due to Proposition \ref{pro:ghosts}, the automorphism
$\sta g^n$ acts nontrivially on $\defo(\ssC)$ and
$\sta i\circ \sta g^n$ is not a quasireflection. \end{proof}

\subsubsection{No-Ghosts.}
By Remark \ref{rem:circuits} and  Proposition \ref{pro:ghosts},
$\ul\aut_C\level$ is trivial if and only if the multiplicity graph $\Gamma({\nu})$
has only one vertex. We call such graphs \emph{bouquets}.
\begin{corollary}\label{cor:noghosts}
Let $\ell$ be prime. The group of ghost automorphisms $\ul\aut_C\level$ is trivial if and only if $\Gamma(\nu)$ is a bouquet. \qed
\end{corollary}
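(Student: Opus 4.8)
The plan is to reduce the statement to the combinatorial description of the ghost group already in place. By Proposition~\ref{pro:ghosts}, since $\ell$ is prime we have the canonical identification $\ul\aut_C\level\cong G_{\nu}(\Gamma;\pmmu_\ell)$, and by Remark~\ref{rem:ghostgroup}---that is, by \eqref{eq:imcap} applied along the contraction $\Gamma\to\Gamma(\nu)$---this group is canonically $\im\bigl(\delta\colon C^0(\Gamma(\nu);\pmmu_\ell)\to C^1(\Gamma(\nu);\pmmu_\ell)\bigr)$. So the first step is to invoke these two facts and rephrase the claim as: this image vanishes if and only if $\Gamma(\nu)$ is a bouquet.

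Next I would appeal to the cut-space description recalled in Remark~\ref{rem:circuits}. The contracted graph $\Gamma(\nu)$ is connected, being a quotient of the connected graph $\Gamma$, so the kernel of $\delta\colon C^0(\Gamma(\nu))\to C^1(\Gamma(\nu))$ is exactly the constants and $\im\delta$ is freely generated by its $\#V(\nu)-1$ cuts, cf.~\eqref{eq:imiso}. Tensoring with $\pmmu_\ell$, which is nontrivial since $\ell\ge 2$, gives $G_{\nu}(\Gamma;\pmmu_\ell)\cong(\pmmu_\ell)^{\oplus(\#V(\nu)-1)}$. Hence $\ul\aut_C\level$ is trivial precisely when $\#V(\nu)-1=0$, i.e.\ when $\Gamma(\nu)$ has a single vertex, which is by definition a bouquet.

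There is no genuine obstacle here: the whole argument is bookkeeping on top of Proposition~\ref{pro:ghosts} and the freeness of the cut space. The only points worth keeping in view are that the primality of $\ell$ is exactly what makes the canonical identification of Proposition~\ref{pro:ghosts} available, and that the connectedness of $\Gamma(\nu)$ is what pins the rank of the cut space to $\#V(\nu)-1$ (so that ``one vertex'' is equivalent to ``the cut space is zero'').
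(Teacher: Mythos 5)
Your proof is correct and follows exactly the route the paper takes: it chains Proposition~\ref{pro:ghosts} with the identification $G_{\nu}(\Gamma;\pmmu_\ell)\cong(\pmmu_\ell)^{\oplus(\#V(\nu)-1)}$ from Remark~\ref{rem:ghostgroup} (ultimately the cut-space description of Remark~\ref{rem:circuits} and \eqref{eq:imcap}), and observes that this is trivial precisely when $\#V(\nu)=1$. The extra remarks about connectedness of $\Gamma(\nu)$ and nontriviality of $\pmmu_\ell$ are harmless clarifications the paper leaves implicit.
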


%

Combining Corollary \ref{cor:noghosts} and Proposition \ref{pro:quasireflections}
we get the following result.

\begin{theorem}\label{thm:smooth}
Let $\ell$ be prime and assume $g\ge 4$.
The following conditions are equivalent.
\begin{enumerate}
\item The point of $\rr_{g,\ell}$ representing $\level$ is smooth.
\item The group $\ul\aut\level$ is spanned by ETIs of $\sta C$.
 \item The graph $\Gamma(\nu)$ is a bouquet and $\aut'(C)$ is spanned by ETIs of $C$.
\label{secondcond}
\end{enumerate}
 \end{theorem}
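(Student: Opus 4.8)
The plan is to deduce the theorem by assembling the tools already developed in \S\ref{subsect:primel} and \S\ref{subsect:automlevel}, using Prill's criterion (Fact~\ref{fact:singularities}) as the bridge between the local algebraic geometry and the group-theoretic statements. First I would rewrite the local picture of $\rr_{g,\ell}$ at the point representing $\level$ as the scheme-theoretic quotient $\defo\level/\aut\level$, and recall that the quasitrivial $\pmmu_\ell$ acts trivially, so the action factors through $\ul{\aut}\level$; by Remark~\ref{rem:faithful} (where $g\ge 4$ is used) this action is faithful. Thus the quotient singularity is governed by $\ul{\aut}\level$ acting faithfully on $\defo\level$.

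The equivalence $(1)\Leftrightarrow(2)$ is then essentially a restatement of Prill's criterion combined with Proposition~\ref{pro:quasireflections}: the quotient is smooth if and only if $\ul{\aut}\level$ is generated by elements acting trivially or as quasireflections, and Proposition~\ref{pro:quasireflections} identifies the quasireflections in $\ul{\aut}\level$ as exactly the ETIs of $\sta C$ (the trivial automorphism is subsumed into "spanned by ETIs" by convention). For $(2)\Leftrightarrow(3)$ I would use the exact sequence \eqref{eq:ghostsandlifts}, $1\to \ul\aut_C\level \to \ul\aut\level\to \aut'(C)\to 1$. In one direction: if $\ul{\aut}\level$ is spanned by ETIs of $\sta C$, then since ETIs act trivially on the coarsening off the elliptic tail and as the canonical involution on it, their images in $\aut'(C)$ are ETIs of $C$, so $\aut'(C)$ is spanned by ETIs; moreover no nontrivial ghost can be an ETI — a ghost fixes every geometric point of $C$ whereas an ETI acts as the order-two canonical involution on the tail $E$ — hence $\ul\aut_C\level$ contains no ETI, but it is generated by (restrictions of) ETIs only if it is trivial, which by Corollary~\ref{cor:noghosts} forces $\Gamma(\nu)$ to be a bouquet. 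Conversely, if $\Gamma(\nu)$ is a bouquet then $\ul\aut_C\level=1$ by Corollary~\ref{cor:noghosts}, so $\ul{\aut}\level\cong \aut'(C)$, and if the latter is spanned by ETIs of $C$ then each such ETI lifts (uniquely, or up to the trivial ghost group) to an ETI of $\sta C$ — here one invokes the discussion following Definition~\ref{defn:elltail} and Example~\ref{exa:level_on_elltails} showing that a genus-one tail in a stable level curve carries a canonical involution $\sta i$ with $\sta i^*(\sta L|_{\sta E})\cong \sta L|_{\sta E}$ — so $\ul{\aut}\level$ is spanned by ETIs of $\sta C$.

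The main obstacle I anticipate is the careful handling of the lifting of an ETI of $C$ to an ETI of $\sta C$ in the case where the elliptic tail is the stack-theoretic curve $\sta E=[\wt E/\pmmu_\ell]$ of Example~\ref{exa:level_on_elltails}: one must check both that the canonical involution $\sta i$ exists and satisfies the compatibility $\sta i^*(\sta L|_{\sta E})\cong \sta L|_{\sta E}$, and that the \emph{generating} property passes correctly through the sequence \eqref{eq:ghostsandlifts} — i.e.\ that when $\ul\aut_C\level$ is trivial, a generating set of $\aut'(C)$ by ETIs genuinely pulls back to a generating set of $\ul{\aut}\level$ by ETIs of $\sta C$ without introducing spurious non-ETI generators. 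This is where the analysis of $\aut(\sta E,n)$ (the split sequence $0\to\pmmu_\ell\to\aut(\sta E,n)\to\pmmu_2\to 0$, with $\sta i$ as the canonical lift of the generator of $\pmmu_2$) does the work. Everything else is bookkeeping with the exact sequences and Prill's criterion, and I would present it in the order: (i) reduce to $\ul{\aut}\level$ acting faithfully; (ii) $(1)\Leftrightarrow(2)$ via Prill and Proposition~\ref{pro:quasireflections}; (iii) $(2)\Leftrightarrow(3)$ via \eqref{eq:ghostsandlifts}, Corollary~\ref{cor:noghosts}, and the lifting of ETIs.
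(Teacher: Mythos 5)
Your proposal follows the same route as the paper's proof: reduce via Prill's criterion (Fact~\ref{fact:singularities}) and the faithfulness of the action (Remark~\ref{rem:faithful}) to a statement about $\ul{\aut}\level$, use Proposition~\ref{pro:quasireflections} to identify the quasireflections with ETIs for $(1)\Leftrightarrow(2)$, then mediate between $(2)$ and $(3)$ through the exact sequence \eqref{eq:ghostsandlifts} and Corollary~\ref{cor:noghosts}.

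There is one small gap in the direction $(2)\Rightarrow(3)$. You argue that a nontrivial ghost is not a single ETI (correct: a ghost fixes all geometric points of $C$, an ETI does not), and then pass to ``hence $\ul\aut_C\level$ contains no ETI, but it is generated by (restrictions of) ETIs only if it is trivial.'' That last clause is not an argument: $\ul\aut_C\level$ is not generated by ETIs, it is merely a (normal) subgroup of $\ul{\aut}\level=\langle\text{ETIs}\rangle$, so its elements are \emph{products} of ETIs, and ruling out single ETIs does not rule out nontrivial products. The step that actually closes this is the observation the paper makes explicit: \emph{any nontrivial composition of ETIs has a nontrivial coarsening}. This holds because distinct ETIs are supported on distinct elliptic tails (there is at most one ETI per tail, namely the canonical involution), so a product of ETIs with at least one nontrivial factor acts as the canonical involution on at least one tail and is therefore not a ghost. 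With this sentence inserted, your deduction $\ul\aut_C\level=1$, whence $\Gamma(\nu)$ is a bouquet by Corollary~\ref{cor:noghosts}, is complete, and the rest of your argument---including the converse lifting of ETIs through Example~\ref{exa:level_on_elltails} and the analysis of $\aut(\sta E,n)$---matches the paper.
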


\begin{proof}
The point representing $\level$ is smooth if and only $\ul \aut\level$
is generated by elements operating on
$\defo\level$ as the identity or as quasireflections.
Nontrivial elements of $\ul\aut\level$ never operate as the identity, see Remark \ref{rem:faithful}.
By Proposition \ref{pro:quasireflections} elements operating as quasireflections are
precisely the ETIs of $\ssC$; hence (i) $\Leftrightarrow$ (ii). Now (iii) implies $\ul{\aut}_C\level=1$ and $\ul{\aut}\level
=\aut'(C)$ generated by ETIs
of $C$; we deduce (ii) because the ETIs generating $\aut'(C)$ lift canonically
to ETIs generating $\aut\level$. Conversely,
(ii) holds only if there are no nontrivial ghosts ($\ul{\aut}_C\level=1$) because any nontrivial composition
of ETIs has a nontrivial coarsening. Hence, $\Gamma(\nu)$ is a bouquet,
 $\ul{\aut}\level=\aut'(C)$, 
 and the coarsening of the ETIs spanning
$\ul{\aut}\level$ are ETIs spanning $\aut'(C)$.
\end{proof}

\subsection{Generalisation to the case of level curves of 
composite level}\label{subsect:compositel}
The generalisation of the above statement requires
a modification of the condition ``$\Gamma(\nu)$ is a bouquet'' in part (iii); we introduce a
new set of contractions. We are grateful to Roland Bacher for several ideas that helped
us a great deal in finding
the correct setup for this section.

\subsubsection{The truncated valuation of $\ZZ/p^n$.} \label{subsubsect:truncatedval}
For any prime $p$ we recall that the  ring $\ZZ/p^n$
is a truncated valuation ring in the sense of
\cite[\S1.1]{Deligne}. We recall the definition, which applies to any
local ring $R$ whose maximal ideal
$\mathfrak m$ is generated by a nilpotent element.
We set the valuation
$\val_{\mathfrak m} \colon R \rightarrow \ZZ\cup \{\infty\}$,
$x \mapsto    \sup\{i\mid x\in {\mathfrak m}^i\}$,
taking values in $\ZZ\cap [0,\length(R)-1]$
on $R\setminus \{0\}$ and satisfying $\val_{\mathfrak m}(0)=\infty$ (if $R=\ZZ/p^n$, then $\mathfrak m=(p)$ and $\length(R)=n$).

\subsubsection{The vector-valued function $\pmb \nu$.}
Consider the prime factorisation of $\ell$
$$\ell=\prod_{p\mid \ell} p^{e_p},$$
where $e_p$ is the $p$-adic valuation of $\ell$.
Then, the following vector-valued function $\pmb \nu_M$, or simply $\pmb \nu$,
encodes the truncated valuations of $M(e)\ \modulo \ p^{e_p}$ in $\ZZ/p^{e_p}$ for all $p\mid \ell$.
\begin{equation}
\label{eq:vector-valued_characteristic}
e\mapsto \pmb \nu(e)=\big(\nu_p(e)\big)_{p\mid \ell} \qquad
\text{where} \quad  \nu_p(e):=\val_{(p)}\big(M(e)\ \modulo\ p^{e_p}\big).
\end{equation}
 Notice that, when $\ell$ is prime, we recover the characteristic 
 function $\nu_M$  of the support of $M$   $$\val_{(p)}(M(e))=\nu_M(e).$$
\subsubsection{Contractions.}\label{subsect:contractions} For each $p\mid \ell$, the coordinate $\nu_p$ of
$\pmb \nu=(\nu_p)_{p\mid \ell}$
yields a filtration
\begin{equation*}\label{eq:filtration}
\varnothing \subseteq \{\nu_{p}\ge e_p\}_E\subseteq \{\nu_{p}\ge e_p-1\}_E \subseteq \ldots\subseteq \{ \nu_{p}\ge k\}_E
 \subseteq \ldots\subseteq\{\nu_{p}\ge 1\}_E
 \subseteq\{\nu_{p}\ge 0\}_E=E.                                             \end{equation*}
To each of the 
above edge subsets 
we can naturally associate a subgraph (the 
vertex set is formed by the heads and the tails of the chosen edges):
\begin{equation}\label{eq:graphfiltration}
\varnothing \subseteq \Delta(\nu_{p}^{e_p})
\subseteq \Delta(\nu_{p}^{e_p-1}) \subseteq
\dots \subseteq \Delta(\nu_{p}^{k})
\subseteq \dots \subseteq \Delta(\nu_{p}^{1}) \subseteq \Delta(\nu_{p}^{0})=\Gamma.
\end{equation}
 The respective contractions $\Gamma({\nu_p^k})$
of $\{ \nu_{p}\ge k\}_E$ fit in the  sequence of contractions
\begin{equation}\label{eq:graphscontraction}
\Gamma
 \longrightarrow\Gamma({\nu_{p}^{e_p}})
 \longrightarrow \Gamma({\nu_{p}^{e_p-1}}) \longrightarrow\ldots\longrightarrow
\Gamma({\nu_{p}^k}) \longrightarrow \ldots
\longrightarrow \Gamma({\nu_{p}^{1}})
 \longrightarrow \Gamma({\nu_{p}^{0}}),                                                                                \end{equation}
where the graph $\Gamma({\nu_{p}^{0}})$ is the null graph ($\Gamma$ is connected). 
The sets of vertices $V(\nu_p^k)$ fit in 
\begin{equation}\label{eq:verticessetscontraction}
V
 \twoheadrightarrow  V({\nu_{p}^{e_p}})
 \twoheadrightarrow V({\nu_{p}^{e_p-1}}) \twoheadrightarrow\ldots\twoheadrightarrow
V({\nu_{p}^k}) \twoheadrightarrow \ldots  \twoheadrightarrow V({\nu_{p}^{1}})
\twoheadrightarrow V({\nu_{p}^{0}})=\{\bullet\}.                                                                                        \end{equation}
The sets of edges $E(\nu_p^k)$ are related by the reversed inclusions
\begin{equation}\label{eq:edgessetscontraction}
E\supseteq E({\nu_{p}^{e_p}})
 \supseteq E({\nu_{p}^{e_p-1}}) \supseteq \ldots\supseteq
E({\nu_{p}^k}) \supseteq \ldots \supseteq E({\nu_{p}^{1}}) \supseteq E({\nu_{p}^{0}})=\varnothing.                                                                                        \end{equation}
In the introduction, for brevity, we used the notation $\Delta_p^k$ and $\Gamma_p^k$ the graphs $\Delta(\nu_p^k)$ and $\Gamma(\nu_p^k)$.
Contracting  $\{ \nu_{p}\ge k\}_E$ makes sense for any $k$ in
$\ZZ \cup \{\infty\}$; for $k\ge e_p$ we get
$\Gamma(\nu_p^k)=\Gamma({\nu_{p}^{e_p}})$,
for $k\le 0$ we get the null graph $\Gamma(\nu_p^k)=\Gamma({\nu_{p}^{0}})$.
For $k\in\{0,\dots,e_p\}$, the following holds.
\begin{definition}[the graph $\Gamma(\nu_p^k)$]\label{defn:contractpq}
For $p$ prime dividing $\ell$ and $k\in\{0,\dots,e_p\}$,
the map $\Gamma\to \Gamma(\nu_p^k)$ is given by contracting
the edges $e$ for which $p^k$ divides $M(e)\in \ZZ/p^{e_p}$.
\end{definition}

\subsubsection{The subcomplex $C_{\pmb \nu}^\bullet(\Gamma;\pmmu_\ell)$.}
Let us point out that, {for } $$\ol \nu_p(e)=\min(e_p,\nu_p(e)),$$ 
we have
$\gcd(M(e),\ell)= \prod\nolimits_{p\mid \ell} p^{\ol\nu_p(e)}(=\ell/r(e))$. We systematically use the canonical morphisms
 \begin{equation}\label{eq:injectioninZl}
\bigoplus\nolimits_{p\mid \ell} {} \pmmu_{p^{e_p- \ol\nu_p(e)}} \underset{\subseteq }{\xrightarrow{\ \qquad \ }}
  {\bigoplus}_{p\mid \ell} 
\ \pmmu_{p^{e_p}}= \pmmu_\ell,                                                                                                                                 \end{equation}
where  the term on the left hand side 
may be regarded, 
via a canonical identification, as  
$\pmmu_{r(e)}$. 

 Now, we generalise
the above mentioned subcomplex $C_{\nu}^\bullet(\Gamma;\pmmu_\ell)$ (see  \eqref{eq:0cochainsupported} and \eqref{eq:1cochainsupported}). Set
\begin{align*}
C_{\pmb \nu}^0(\Gamma;\pmmu_\ell)&=
\left\{a\colon V\to \pmmu_\ell\mid a(e_+)(a(e_-))^{-1}\in \bigoplus\nolimits_{p\mid \ell} \pmmu_{p^{e_p- \ol\nu_p(e)}}  =
\pmmu_{r(e)}\right \}\\
C_{\pmb \nu}^1(\Gamma;\pmmu_\ell)&=\left\{b\colon \mathbb E\to \pmmu_\ell\mid
b(\ol e)=b(e)^{-1} \  \text{ and }  \ b(e)\in
\bigoplus\nolimits_{p\mid \ell}  \pmmu_{p^{e_p- \ol\nu_p(e)}}= 
\pmmu_{r(e)}\right\}.
\end{align*}
By restricting $\delta$ we get the differential 
$$C_{\pmb \nu}^0(\Gamma;\pmmu_\ell)\xrightarrow{\ \delta(\pmb \nu)\ } C_{\pmb \nu}^1(\Gamma;\pmmu_\ell);$$
and $(C_{\pmb \nu}^\bullet(\Gamma;\pmmu_\ell), \delta)$ is a subcomplex of $(C^\bullet(\Gamma;\pmmu_\ell), \delta)$.
Definition \ref{defn:ghostgroup} extends word for word.
\begin{definition}\label{defn:ghostgroupl}
 Set $G_{\pmb \nu}(\Gamma;\pmmu_\ell)=C^1_{\pmb \nu}(\Gamma;\pmmu_\ell)\cap \im \delta$.
\end{definition}
By construction $G_{\pmb \nu}(\Gamma;\pmmu_\ell)$ equals 
$\im(\delta(\pmb \nu))$ via the inclusion
$C_{\pmb \nu}^1(\Gamma;\pmmu_\ell)\subseteq C^1(\Gamma;\pmmu_\ell)$.
The following theorem
proves that, with this setup,
 $G_{\pmb \nu}(\Gamma;\pmmu_\ell)$
is again isomorphic to the group of ghost automorphisms. First, we introduce 
the generalised group of symmetric functions
$$S_{\pmb \nu}(\Gamma;\pmmu_\ell)=\left\{b\colon \mathbb E\to \pmmu_\ell\mid b(\ol e)=b(e) \ \text{ and } \ b(e)\in
\bigoplus\nolimits_{p\mid \ell}  \pmmu_{p^{e_p-\ol \nu_p(e)}}=\pmmu_{r(e)}\right\}.$$
Via $\ZZ/r(e) =\mathrm{Hom}(\pmmu_{r(e)},\mathbb G_m)$, 
we have the product
\begin{align*}
S_{\pmb \nu}(\Gamma; \pmmu_\ell)\times C_{\pmb\nu}^1(\Gamma; \ZZ/\ell)&
\rightarrow C_{\pmb \nu}^1(\Gamma; \pmmu_\ell);& ({\sta a},f)&
\mapsto \sta{a}\odot f:=f({\sta a}).
\end{align*}

Again, since 
$M$ lies by construction in $C^1_{\pmb \nu}(\Gamma;\ZZ/\ell)$
we get the isomorphisms
\begin{equation}\label{eq:MandM-1}
M\colon S_{\pmb \nu}(\Gamma; \pmmu_\ell)
\rightarrow C_{\pmb \nu}^1(\Gamma; \pmmu_\ell)\qquad \text{and} 
\qquad
M^{-1}\colon C_{\pmb \nu}^1(\Gamma; \pmmu_\ell)
\rightarrow S_{\pmb \nu}(\Gamma; \pmmu_\ell).
\end{equation}
Here $M$ maps the symmetric function ${\sta a}\colon e\mapsto 
\sta a(e)$ to 
the $1$-cochain $\sta{a}\odot M$ given by applying at each edge $e$
the homomorphism 
$m(e)\in \ZZ/r(e) =\mathrm{Hom}(\pmmu_{r(e)},\mathbb G_m)$ 
to the $r(e)$th root $\sta a(e)$. Conversely  
$M^{-1}$ maps the $1$-cochain $b\colon e\mapsto b(e)$ of
$C_{\pmb \nu}(\Gamma; \pmmu_\ell)$
to the symmetric function $M^{-1}b=\sta a$, 
defined as
\begin{equation}\label{eq:M-1explicit}
M^{-1}b\colon e \mapsto \begin{cases} [m(e)^{-1}]_{r(e)} ( b(e)) &\text{ for $M(e)\neq 0$,}\\
                          1 & \text{ if $M(e)=0,$}
                         \end{cases}\end{equation}
(where $[m(e)^{-1}]_{r(e)}$ is the inverse of $m(e)$ in $\ZZ/r(e)$).

\begin{theorem} \label{thm:ghosts_compositel} 
Let $\level$ be a  level curve of level $\ell\in \mathbb N^{\times}$;
write $M$ for its multiplicity and $\pmb \nu$
for the corresponding vector-valued function \eqref{eq:vector-valued_characteristic}.
We have the following statements.
\begin{enumerate}\item 
There is a canonical isomorphism
${\aut}\level=S_{\pmb \nu}(\Gamma; \pmmu_\ell).$ The above local
description 
of  $\sta a\in S_{\pmb \nu}(\Gamma; \pmmu_\ell)$ holds without changes
if we write $\sta a$ as a $\pmmu_\ell$-valued function.
\item Let $\sta a \in S_{\pmb \nu}(\Gamma; \pmmu_\ell)$;
then, we have (using \eqref{eq:twisters})
$\sta a^*\ssL=\ssL\otimes \tau(\sta{a}\odot M).$
\item We have \label{eq:multinr(e)}
$$\ul{\aut}_C\level\cong G_{\pmb \nu}(\Gamma; \pmmu_\ell).$$
The $1$-cochain $b\in G_{\pmb \nu}(\Gamma; \pmmu_\ell)\subset
C_{\pmb \nu}^1(\Gamma; \pmmu_\ell)$
identifies the ghost automorphism $\sta a$ 
corresponding to the symmetric function $M^{-1}b$ explicitly 
defined above.
\end{enumerate}
\end{theorem}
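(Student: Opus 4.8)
The plan is to reduce Theorem~\ref{thm:ghosts_compositel} to the prime-level case treated in \S\ref{subsect:primel} by decomposing everything into $p$-primary parts, and then to re-assemble the pieces using the Chinese Remainder Theorem. First I would establish part (i). The group $\aut_C(\ssC)$ is $\bigoplus_{e\in E}\pmmu_{r(e)}$ by the Abramovich--Corti--Vistoli identification \eqref{eq:ACVidentif}, and via the canonical inclusion \eqref{eq:injectioninZl} each $\pmmu_{r(e)}$ sits inside $\pmmu_\ell$ as $\bigoplus_{p}\pmmu_{p^{e_p-\ol\nu_p(e)}}$; the condition $\ol\nu_p(e)=\min(e_p,\nu_p(e))$ with $\gcd(M(e),\ell)=\prod_p p^{\ol\nu_p(e)}$ is exactly what makes the target of $\sta a(e)$ agree with the subgroup cut out in the definition of $S_{\pmb\nu}(\Gamma;\pmmu_\ell)$. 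So the local-at-a-node description $(x,y)\mapsto(\sta a(e)x,y)$ from \eqref{eq:localghostpicture} goes through verbatim, giving the canonical isomorphism $\aut_C(\ssC)=S_{\pmb\nu}(\Gamma;\pmmu_\ell)$; writing $\sta a$ as a $\ZZ/\ell$-valued function changes nothing since $\pmmu_\ell\cong\ZZ/\ell$ canonically once a primitive root is fixed (and all the maps are natural in that choice).

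Next I would prove part (ii), the twisting formula $\sta a^*\ssL\cong\ssL\otimes\tau(\sta a M)$. This is the heart of the matter and I expect it to be the main obstacle, because at a node with stabiliser $\pmmu_{r}$ the local weight $m=m(e)$ of $\ssL$ need no longer be $1$, so the clean computation recalled after \eqref{eq:twisters} must be redone with general $m$. The local model of $\ssL$ at the node is $\zeta\cdot(t,x,y)=(\zeta^m t,\zeta x,\zeta^{-1}y)$ on $\{xy=0\}\times\mathbb A^1$; the quotient is trivialised off the node by $x^m t^{-1}$ on the privileged branch and by $y^{?}t$ on the other (choosing exponents so the monomials are $\pmmu_r$-invariant, using $\gcd(m,r)=1$). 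Pulling back along $\sta a\colon(x,y)\mapsto(\sta a(e)x,y)$ multiplies the first trivialising section by $\sta a(e)^{-m}$ and leaves the second alone, hence changes $\ssL$ by the line bundle $\tau$ of the $\pmmu_\ell$-valued $1$-cochain sending $e\mapsto m(e)(\sta a(e))=(\sta a M)(e)$ — which is precisely the definition of the product $\sta a M$ given just before \eqref{eq:MandM-1}. This identifies $\sta a^*\ssL\otimes\ssL^{-1}$ with $\tau(\sta a M)\in\Pic(\ssC)$, using the exact sequence \eqref{eq:Picsequence}. I would reference \cite{Cstab} for the general statement \eqref{eq:twisters} but should present the weight-$m$ bookkeeping explicitly.

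Finally, part (iii) follows formally. By part (ii), $\sta a\in\aut_C(\ssC)$ lies in the subgroup $\ul\aut_C\level=\{\sta a\mid\sta a^*\ssL\cong\ssL\}$ if and only if $\tau(\sta a M)=\cO$, i.e. if and only if $\sta a M\in\ker\tau$; by \eqref{eq:Picsequence} (more precisely \eqref{eq:Picsequenceall}) $\ker\tau=\im\delta$. Since $\sta a\in S_{\pmb\nu}(\Gamma;\pmmu_\ell)$ forces $\sta a M\in C^1_{\pmb\nu}(\Gamma;\pmmu_\ell)$ by \eqref{eq:MandM-1}, the condition becomes $\sta a M\in C^1_{\pmb\nu}(\Gamma;\pmmu_\ell)\cap\im\delta=G_{\pmb\nu}(\Gamma;\pmmu_\ell)$. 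As $M\colon S_{\pmb\nu}(\Gamma;\pmmu_\ell)\to C^1_{\pmb\nu}(\Gamma;\pmmu_\ell)$ is an isomorphism with inverse \eqref{eq:M-1explicit}, this gives the identification $\ul\aut_C\level\cong G_{\pmb\nu}(\Gamma;\pmmu_\ell)$, with $b=\sta a M$ corresponding to $\sta a=M^{-1}b$, as claimed. It remains only to check that $M$ does send $S_{\pmb\nu}$ into $C^1_{\pmb\nu}$ (and back): at an edge $e$ with $M(e)\ne0$ multiplication by $m(e)$ is an automorphism of $\pmmu_{r(e)}$ since $\gcd(m(e),r(e))=1$, so the $p$-primary constraints $\pmmu_{p^{e_p-\ol\nu_p(e)}}$ are preserved componentwise — this is the one routine verification I would include, the rest being a diagram chase through the exact sequences already in the excerpt.
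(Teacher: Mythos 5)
Your proposal is correct and follows essentially the same route as the paper: part (i) is the Abramovich--Corti--Vistoli identification $\aut_C(\ssC)=\bigoplus_{e\in E}\pmmu_{r(e)}$ matched against the definition of $S_{\pmb\nu}$, part (ii) is the twisting formula from \cite{Cstab}, and part (iii) is the formal consequence via $\ker\tau=\im\delta$ and the isomorphism $M\colon S_{\pmb\nu}\to C^1_{\pmb\nu}$. The only real difference is cosmetic: the paper's proof is a one-line citation chain, whereas you (helpfully) spell out the weight-$m$ local bookkeeping that the paper delegates to \cite[Prop.~2.18]{Cstab}; the opening framing about a Chinese Remainder Theorem re-assembly is not actually load-bearing in what follows, since the match between $\pmmu_{r(e)}$ and the defining constraints of $S_{\pmb\nu}$ is immediate from the identity $\gcd(M(e),\ell)=\prod_p p^{\ol\nu_p(e)}$.
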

\begin{proof} 
Since $S_{\pmb \nu}(\Gamma; \ZZ/\ell)={\bigoplus}_{e\in E}\pmmu_{r(e)}$,
we recover the group $\aut_C(\ssC)$ of \cite[\S7, Prop.~7.1.1]{ACV}.
Point (ii) yields (iii) immediately and is a direct consequence of \cite[Prop.~2.18]{Cstab} as before.
\end{proof}

\begin{remark}
If we work with a fixed primitive 
$\ell$th root of unity; then, giving 
${\sta a}(e)\in \pmmu_{r(e)}$
amounts to specifying a multiple 
$\wt{\sta a}(e)$ of $\frac{\ell}{r(e)}$ modulo $\ell$ 
and the ghost $\sta a$ operates on
$\defo\level$ as
$$\left\{\bigoplus\nolimits_e \left(\tau_e\mapsto 
\xi_\ell^{{\wt{\sta a}(e)}}\tau_e\right)\right\}\oplus \id.$$
This follows from the analysis of the 
quotient 
$\defo\level/\defo(\leveli,\levelii,\leveliii,\Sing(\leveli))$ 
carried out in \S\ref{subsubsect:defolevelcurve} and 
 of the action of the ghosts on it, \ref{subsubsect:ghostautom}.
\end{remark}	

\begin{remark}\label{rem:fixingirredcompl}
Every ghost 
restricts to the identity on the irreducible components of $\ssC$.
\end{remark}

\subsubsection{Computing the group $G_{\pmb \nu}(\Gamma;\pmmu_\ell)$.}
When $\ell$ is a prime integer, 
the group of ghosts is a free $\pmmu_\ell$-module and
Remark \ref{rem:ghostgroup} allowed us to compute its
rank over $\pmmu_\ell$: the number of vertices of the contracted graph minus $1$.
In general, when $\ell$ is composite, the group of ghosts is not free on $\pmmu_\ell$.
By generalising Remark \ref{rem:ghostgroup},
we provide an explicit formula
for its elementary divisors.

\begin{remark}
 Once an orientation
 $E\to \mbb E$ is specified, $C_{\pmb \nu}^1(\Gamma;\pmmu_\ell)$ 
 may be written as
 $${\bigoplus}_{{{e\in E}}} \bigoplus\nolimits_{p\mid \ell}  
 \pmmu_{p^{e_p- \ol \nu_p(e)}}.$$
We may invert the order of the direct sums and rewrite the summands as usual 
$$\bigoplus\nolimits_{p\mid \ell}\left({\bigoplus}_{{{e\in E}}} \   \pmmu_{p^{e_p}- p^{\ol \nu_p(e)}}\right).$$  
The  summands of the first direct sum (over the prime divisors $p$) 
equal the (non-direct) sums of subgroups
$$\sum\nolimits_{1\le k\le e_p}\mathcal B 
C^1(\Gamma(\nu_p^k);\pmmu_{p^{e_p-k+1}})\subseteq 
C^1(\Gamma;\pmmu_\ell).$$
We deduce from this characterisation the following identity
which does not involve any fixed orientation $E\to \mbb E$ and holds both for $1$-cochains and
for $0$-cochains. We have
\begin{equation}\label{eq:complexexpression}
C_{\pmb \nu}^i(\Gamma;\pmmu_\ell)=
{\bigoplus}_{p\mid \ell}
\sum\nolimits_{1\le k\le e_p} \mathcal B C^i(\Gamma(\nu_p^{k});
\pmmu_{p^{e_p-k+1}}) \qquad i=0,1.\end{equation}
Moreover, we immediately get an explicit computation of the groups $C_{\pmb \nu}^1(\Gamma;\pmmu_\ell)$: because
$\mathcal B C^0(\Gamma(\nu);\pmmu_{p^{h}})\cong (\pmmu_{p^{h}})^{\oplus
 \# V(\nu)}$ and
$\mathcal B C^1(\Gamma(\nu);\pmmu_{p^{h}})\cong (\pmmu_{p^{h}})^{\oplus \#E(\nu)}$,
we have
$$C^i_{\pmb \nu}(\Gamma;\pmmu_\ell)\cong 
{\bigoplus}_{p\mid \ell} {\bigoplus}_{k=1}^{e_p} 
(\pmmu_{p^{k}})^{\oplus \eta^i(\nu_p^k)},$$
where, using the Kronecker delta, we can compute $\eta^i(\nu_p^k)$ from \eqref{eq:verticessetscontraction} and \eqref{eq:edgessetscontraction}
$$\eta^i(\nu_p^k):=\begin{cases}  \# V(\nu_p^{e_p-k+1}) -\delta_{k,e_p}\# V(\nu_p^{e_p-k})  & i=0, \\
\# E(\nu_p^{e_p-k+1}) -\delta_{k,e_p}\# E(\nu_p^{e_p-k}) & i=1.
                                                                                                    \end{cases}
$$
\end{remark}
The following lemma, embodying 
the corollary stated in the introduction, follows.
\begin{lemma}\label{tsohg}
We have
 \begin{equation*}\label{eq:GpmbnuwithGammanu}
 G_{\pmb \nu}(\Gamma;\pmmu_\ell)={\bigoplus}_{p\mid \ell} \sum\nolimits_{1\le k\le e_p}  \im\delta(\nu_p^k),
 \end{equation*}
where $\delta(\nu_p^k)$ is $C^0(\Gamma(\nu_p^{k});\pmmu_{p^{e_p-k+1}})
\to C^1(\Gamma(\nu_p^{k});\pmmu_{p^{e_p-k+1}}).$
More explicitly, set
$$\al_p^k:=\# V(\nu_p^{e_p-k+1}) -\# V(\nu_p^{e_p-k});$$
then, the group $G_{\pmb \nu}(\Gamma;\pmmu_\ell)$ decomposes as 
$$G_{\pmb \nu}(\Gamma;\ZZ/\ell)\cong {\bigoplus}_{p\mid \ell} {\bigoplus}_{k=1}^{e_p} (\pmmu_{p^{k}})^{\oplus \al_p^k} $$
and has order $\frac1\ell \prod_{p\mid \ell} p^{\#V_p}$ 
for $V_p=\sqcup_{k=1}^{e_p} V(\nu_p^k)$.
 \qed
\end{lemma}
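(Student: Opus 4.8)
The plan is to treat the two halves of the statement separately: first obtain the formula $G_{\pmb \nu}(\Gamma;\pmmu_\ell)={\bigoplus}_{p}\sum_{1\le k\le e_p}\im\delta(\nu_p^k)$ formally from \eqref{eq:complexexpression}, and then compute the $p$-primary summand $G_p:=\sum_{1\le k\le e_p}\im\delta(\nu_p^k)$ as an abelian group by restricting cochains to a spanning tree of $\Gamma$ adapted to the multiplicity filtration.

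For the first formula, recall that $G_{\pmb \nu}(\Gamma;\pmmu_\ell)=\im(\delta(\pmb\nu))$ and that $C^i_{\pmb\nu}(\Gamma;\pmmu_\ell)={\bigoplus}_{p\in\mathfrak P}\sum_{1\le k\le e_p}\mathcal B\,C^i(\Gamma(\nu_p^k);\pmmu_{p^{e_p-k+1}})$ for $i=0,1$, by \eqref{eq:complexexpression}. Since $\delta$ is additive, commutes with the blowup maps $\mathcal B$ by \eqref{eq:blowup} and with the coefficient inclusions $\pmmu_{p^{e_p-k+1}}\hookrightarrow\pmmu_\ell$, and respects the prime decomposition $\pmmu_\ell=\bigoplus_p\pmmu_{p^{e_p}}$, applying it to the description of $C^0_{\pmb\nu}$ and using $\delta(\mathcal B\,C^0(\Gamma(\nu_p^k);A))=\mathcal B(\im\delta_{\Gamma(\nu_p^k)})=\im\delta(\nu_p^k)$ gives the claim. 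In particular $G_{\pmb\nu}={\bigoplus}_p G_p$, so one fixes a prime $p$, sets $n:=e_p$, writes $\Delta_j\subseteq\Gamma$ for the subgraph spanned (with all of $V$) by the edges $e$ with $\ol\nu_p(e)\ge j$, and puts $W_j:=\#V(\nu_p^j)$, the number of connected components of $\Delta_j$, so that $\Delta_n\subseteq\cdots\subseteq\Delta_1\subseteq\Delta_0=\Gamma$ and $W_0=1$.

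Next I would choose a spanning tree $T$ of $\Gamma$ \emph{adapted} to this filtration, i.e.\ with $T\cap\Delta_j$ a spanning forest of $\Delta_j$ for every $j$; such a $T$ exists by building spanning forests of $\Delta_n,\Delta_{n-1},\dots$ from the bottom of the filtration upward. The key consequence of adaptedness is that for any edge $e'$ with $\ol\nu_p(e')=j$ the endpoints of $e'$ lie in one component of $\Delta_j$, so the unique $T$-path joining them stays inside $T\cap\Delta_j$. Restricting $1$-cochains to the (oriented) tree edges gives $\rho\colon C^1(\Gamma;\pmmu_{p^n})\to\bigoplus_{e\in T}\pmmu_{p^n}$; as $T$ is connected $\rho$ is injective on $\im\delta$, and since $\#T=\#V-1$ and $\delta$ has the constants as kernel, both sides of $\rho|_{\im\delta}$ have $(p^n)^{\#V-1}$ elements, so $\rho$ restricts to an isomorphism $\im\delta\xrightarrow{\sim}\bigoplus_{e\in T}\pmmu_{p^n}$. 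The heart of the argument is then to show this isomorphism carries $G_p=C^1_{\pmb\nu}(\Gamma;\pmmu_{p^n})\cap\im\delta$ onto $\bigoplus_{e\in T}\pmmu_{p^{\,n-\ol\nu_p(e)}}$: the inclusion ``$\subseteq$'' is just the definition of $C^1_{\pmb\nu}$, while for ``$\supseteq$'' one takes a tuple $(x_e)_{e\in T}$ with $x_e\in\pmmu_{p^{\,n-\ol\nu_p(e)}}$, reconstructs the unique $b=\delta a\in\im\delta$ with $b|_T=(x_e)$, and checks $b\in C^1_{\pmb\nu}$ by observing that the value of $b$ on a non-tree edge $e'$ is, up to sign, the alternating sum of the $x_e$ along the fundamental $T$-cycle of $e'$; by adaptedness this cycle uses only tree edges of $\ol\nu_p$-value $\ge\ol\nu_p(e')$, whence $b(e')\in\pmmu_{p^{\,n-\ol\nu_p(e')}}$.

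Granting this, $G_p\cong\bigoplus_{e\in T}\pmmu_{p^{\,n-\ol\nu_p(e)}}$. Sorting tree edges by the value of $\ol\nu_p$ and using $\#(T\cap\Delta_j)=\#V-W_j$ gives $\#\{e\in T:\ol\nu_p(e)=j\}=W_{j+1}-W_j$ for $0\le j\le n-1$, the edges with $\ol\nu_p=n$ contributing trivial summands $\pmmu_1$; reindexing by $k=n-j$ yields $G_p\cong\bigoplus_{k=1}^{n}(\pmmu_{p^k})^{\oplus\al_p^k}$ with $\al_p^k=W_{n-k+1}-W_{n-k}=\#V(\nu_p^{e_p-k+1})-\#V(\nu_p^{e_p-k})$. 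Summing over $p$ gives the displayed decomposition, and the identification with $\ul\aut_C\level$ is Theorem \ref{thm:ghosts_compositel}(iii). Finally a short telescoping computation gives $\sum_{k=1}^{e_p}k\,\al_p^k=\sum_{j=1}^{e_p}W_j-e_p=\#V_p-e_p$, so $\#G_{\pmb\nu}(\Gamma;\pmmu_\ell)=\prod_{p\mid\ell}p^{\#V_p-e_p}=\frac1\ell\prod_{p\mid\ell}p^{\#V_p}$. I expect the only genuine obstacle to be the ``$\supseteq$'' step, i.e.\ producing the filtration-adapted spanning tree and running the fundamental-cycle bound; everything else is either formal (the first formula) or routine bookkeeping (the two edge counts and the telescoping sum).
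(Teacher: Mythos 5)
Your proof is correct and takes a genuinely different route from the paper. For the first formula you and the paper agree: both apply $\delta$ to \eqref{eq:complexexpression} at $i=0$ and use that $\delta$ commutes with $\mathcal B$, with direct sums, and with sums of subgroups, giving $G_{\pmb\nu}=\bigoplus_p\sum_k\im\delta(\nu_p^k)$. Where you diverge is in extracting the elementary divisors $\al_p^k$. The paper stops after computing (via a fixed orientation) the elementary divisors $\eta^i$ of the groups $C^i_{\pmb\nu}$ themselves and leaves the passage to $G_{\pmb\nu}$ implicit --- presumably one is meant to observe that $G_{\pmb\nu}=C^0_{\pmb\nu}/\pmmu_\ell$ and track how the diagonal $\pmmu_\ell$ sits inside $C^0_{\pmb\nu}$, which is exactly where the Kronecker delta appearing in $\eta^0$ would get cancelled. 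Your argument sidesteps this entirely by working directly with $\im\delta$: you build a spanning tree $T$ adapted to the filtration $\Delta_n\subseteq\dots\subseteq\Delta_0=\Gamma$ bottom-up, note that restriction to $T$ is an isomorphism $\im\delta\xrightarrow{\sim}\bigoplus_{e\in T}\pmmu_{p^n}$, and then use the adaptedness (the fundamental $T$-cycle of a non-tree edge $e'$ stays inside $T\cap\Delta_{\ol\nu_p(e')}$) to see that $G_p$ corresponds exactly to $\bigoplus_{e\in T}\pmmu_{p^{\,n-\ol\nu_p(e)}}$. The edge count $\#\{e\in T:\ol\nu_p(e)=j\}=W_{j+1}-W_j$ and the telescoping sum then give the $\al_p^k$ and the cardinality. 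This is a self-contained and elementary derivation that never needs the $\eta^i$ formula, at the modest price of fixing a choice of spanning tree; it is arguably easier to check than the paper's argument, which requires disentangling a non-direct sum of subgroups and quotienting by a diagonally embedded cyclic group.
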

\begin{remark} For $\ell$ prime, we recover \eqref{eq:imiso}: 
\eqref{eq:verticessetscontraction} reads $V(\nu)\to \{\bullet\}$,
we get $\al_p^1=\#V(\nu)-1$.
(Note that Kronecker delta does not occur in the formula for the elementary divisors
$\al_p^k$.)
\end{remark}
\begin{example} \label{exa:ghostgroup}
We consider the dual graph $\Gamma$ in Figure \ref{fig:dualgraphcrible} of a level-$8$ curve. The multiplicities
assigned to each oriented edge define a cocycle $M\in \ker\partial$.
Here, $2$ is the only prime divisor of $\ell$.
In Figure \ref{fig:truncatedval}, we write next to each edge $e$ the value of $\nu_2(e)$.
  Then, in Figure \ref{fig:graphscontraction}, we show the corresponding contractions. We observe
that, in this case, at each step the number of vertices decreases by $1$.
Therefore, by Lemma \ref{tsohg}, we compute $\al_2^k=1$ for $k=1,2,3$. We finally obtain $64$ ghosts
$$G_{\pmb \nu}(\Gamma;\pmmu_8)\cong \pmmu_2 \oplus \pmmu_4\oplus \pmmu_8$$
that can be spanned by ghosts of order $2$, $4$ and $8$ corresponding to the
$\pmmu_8$-valued symmetric functions in $S_{\pmb \nu}(\Gamma,\pmmu_8)$ displayed in Fig. \ref{fig:generators} 
(in order to simplify the notation 
we specify $8$th roots of unity
with respect to a chosen primitive root $\xi_8$
of unity: we write 
integers mod $8$ next to each edge). 
We check the corollary stated in the 
introduction: there are $9$ vertices in $\Gamma(\nu_2^3), \Gamma(\nu_2^2),$
and $\Gamma(\nu_2^1)$ and there 
are $2^{9}/8$ (\emph{i.e.} $64$) ghosts.
\end{example}

  \begin{figure}
\xymatrix@=3.2pc{
&&&                          &                                                          &   *{\bullet} \ar@{-}[dl]_{\underset{\nearrow}{1}} \ar@{-}[dr]^{\underset{\searrow}{1}} &  \\
& &&*{\bullet}  \ar@{-}@/_/[r]_{\underset{\rightarrow}{0}} & *{\bullet}  \ar@{-} @(dr,dl)[rdru] _{  \underset{\rightarrow}{1}}  \ar@{-}@/_/[l]_{\underset{\leftarrow}{0}}
\ar@{-}[r] _{  \ \ \underset{\leftarrow}{6}} &   *{\bullet}\ar@{-}@/_/[l]_{\underset{\leftarrow}{4}}
\ar@{-}[r]_{\underset{\leftarrow}{2}\ \ }  &   *{\bullet}  & 
}
 \caption{A dual graph $\Gamma$ of a level-$8$ curve with multiplicities} \label{fig:dualgraphcrible}
  \end{figure}
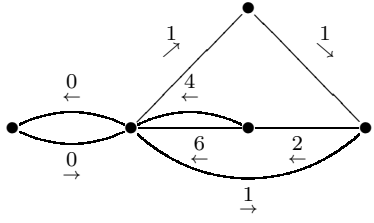

\begin{figure}
\xymatrix@=3.2pc{
&&&                          &                                                          &   *{\bullet} \ar@{-}[dl]_{{0}} \ar@{-}[dr]^{{0}} &  \\
&&&*{\bullet}  \ar@{-}@/_/[r]_{{\infty}} & *{\bullet}  \ar@{-} @(dr,dl)[rdru]_{0}\ar@{-}@/_/[l]_{{\infty}}   \ar@{-}[r] _{ {1}} &   *{\bullet}\ar@{-}@/_/[l]_{{2}}
\ar@{-}[r]_{{1}}  &   *{\bullet}  & 
}
%
 \caption{The truncated valuation associated to $(2)$ in $\ZZ/8$.} \label{fig:truncatedval}
  \end{figure}
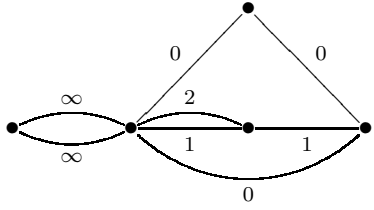

  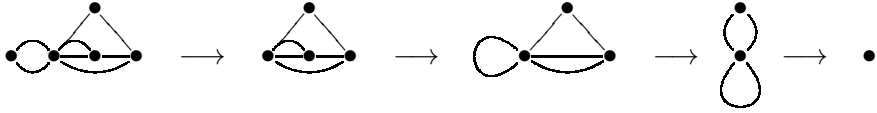
\begin{figure}
\xymatrix@=.8pc{
&&&                           &                                              &   *{\bullet} \ar@{-}[dl] \ar@{-}[dr]  &                &  &                       &   *{\bullet}\ar@{-}[dl]\ar@{-}[dr]      &       &                  &     &      & *{\bullet}\ar@{-}[dl]\ar@{-}[dr]      &      & & *{\bullet}\ar@{-}@/_/[d] \ar@{-}@/^/[d] \\
&&&*{\bullet}  \ar@{-}@/_/[r] & *{\bullet}\ar@{-}@/_/[l]   \ar@{-}[r] \ar@{-}@/_/[rr]    &   *{\bullet}\ar@{-}@/_/[l]\ar@{-}[r]  &   *{\bullet}   & \longrightarrow &   *{\bullet}   \ar@{-}[r]  \ar@{-}@/_/[rr] &   *{\bullet}\ar@{-}@/_/[l] \ar@{-}[r]   & *{\bullet} &  \longrightarrow &   & *{\bullet} \ar@{-} @(ul,dl)  \ar@{-}@/_/[rr] \ar@{-}[rr] & &*{\bullet} &   \longrightarrow  & *{\bullet}  \ar@{-} @(rd,dl)    & \longrightarrow  & *{\bullet}\\
&&                           & \\
}
%
 \caption{The contractions $\Gamma\to \Gamma(\nu_2^3) \to \Gamma(\nu_2^2)\to \Gamma(\nu_2^1)\to \bullet$; \!we have $\#V_3=9$.} \label{fig:graphscontraction}
  \end{figure}

  \begin{figure}
\xymatrix@=2.1pc{
&                          &                                                          &   *{\bullet} \ar@{-}[dl]_{{0}} \ar@{-}[dr]^{{0}} &                          &
                                                     & &   *{\bullet} \ar@{-}[dl]_{{2}} \ar@{-}[dr]^{{0}} &&   &  &   *{\bullet} \ar@{-}[dl]_{{1}} \ar@{-}[dr]^{{7}} & \\
&*{\bullet}  \ar@{-}@/_/[r]_{0} & *{\bullet}\ar@{-}@/_/[l]_{0}   \ar@{-}[r] _{ {4}} \ar@{-} @(dr,dl)[rdru]_{0}  &   *{\bullet}\ar@{-}@/_/[l]_{\ {4}}
\ar@{-}[r]_{{4}}  &   *{\bullet}                    &  *{\bullet}  \ar@{-}@/_/[r]_{0} & *{\bullet}\ar@{-}@/_/[l]_{0}   \ar@{-}[r] _{ {0}} \ar@{-} @(dr,dl)[rdru]_{2}&   *{\bullet}\ar@{-}@/_/[l]_{\ {0}}
\ar@{-}[r]_{{6}}  &   *{\bullet}    &  *{\bullet}  \ar@{-}@/_/[r]_{0} & *{\bullet}\ar@{-}@/_/[l]_{0}   \ar@{-}[r] _{ {0}} \ar@{-} @(dr,dl)[rdru]_{0}&   *{\bullet}\ar@{-}@/_/[l]_{ \ {0}}
\ar@{-}[r]_{{0}}  &   *{\bullet}  &\\
}
%
 \caption{Three generators of order $2,4$ and $8$ of the group of ghosts.} 
\label{fig:generators}
\end{figure}
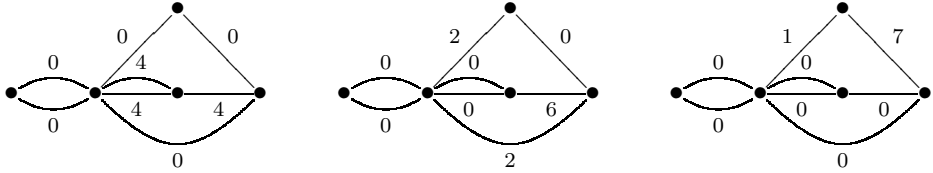

\subsubsection{The length of the fibre of the morphism forgetting level structures.}
\label{rem:CCC} 
In \cite{CCC}, Caporaso, Casagrande and Cornalba check that the length of 
the fibre of the forgetful morphism from their compactified moduli 
of $\ell$th roots to the 
moduli of stable curves equals $\ell^{2g}$ (see \cite[\S4.1, after Lem.~4.1.1]{CCC}).
We show that, in the twisted curve approach used in this paper the length of the fibre is still 
$\ell^{2g}$. The computation 
here is more involved because our moduli functor yields more geometric points, 
which reflects the  fact that the compactified moduli spaces of this paper are 
smooth and actually provide the normalisations of the possibly singular spaces of \cite{CCC} (see  \cite[\S1.2-3]{CEFS}).
The contractions
$\Gamma\to \Gamma(\nu_\ell^k)$ allows to organise our computation efficiently. 

The authors of \cite{CCC} consider 
$\ell$th roots of a line bundle $N$
and the respective moduli functor which can be naturally regarded as a 
fibred category (over the category of schemes).
The authors are not considering the stack representing such a category 
and are mainly interested in the scheme \emph{coarsely} representing this 
moduli functor. For any family of curves $\pi \colon C\to B$,
there exists a scheme $S^\ell(N,\pi)$ representing 
$\ell$th roots of $N$. 
In general  $S^\ell(N,\pi)$ is smooth over $B$, but not proper over $B$.
In the same spirit of 
the present paper, the authors introduce 
a new, less restrictive, notion of root: the 
 ``limit $\ell$th root of $N$''.
The corresponding moduli functor 
is shown in \cite{CCC} to be 
coarsely represented by a proper, but possibly singular, 
scheme $\ol S{ }^\ell(N,\pi)$ over $B$
(singularities occur when $\ell$ is not prime, see discussion 
after \cite[Thm. 4.2.3]{CCC}).

We assume $N=\cO$; then 
for any family $\pi\colon C\to B$ we have a possibly singular scheme $\ol S{}^\ell(\cO,\pi)$ and,
as a byproduct, a moduli space $\ol S{}^{\ell}_g$, which is a 
finite $\ell^{2g}$-cover of the proper moduli space $\ol{\mathcal M}_g$.
We consider, for any $g\ge 2$ and $\ell\ge 1$, the moduli stacks 
$\sta T_{g,\ell}=\sqcup_{d\mid \ell} {\sta R}_{g,d}$  
and the compactifications 
$\ol{\sta T}_{g,\ell}=\sqcup_{d\mid \ell} \ol{\sta R}_{g,d}$ yielding 
the finite cover 
$$\sta p\colon \ol{\sta T}_{g,\ell}\to \ol{\sta M}_{g}.$$ 
We can now state precisely what we mean (both 
here and in \cite{CCC}) by ``the fibre over a point of'' 
$\ol{\sta M}_g$. 
In \cite{CCC}, for a stable curve $C$ over $k$, the main focus is 
the scheme 
$\ol S{}^\ell(\cO, C\to \Spec k)$, which is the zero-dimensional scheme coarsely representing 
the fibred product of categories obtained by pulling back 
 the fibred category of limit $\ell$th roots of $\cO$ 
over $\ol{\sta M}_g$
via the map 
$b\colon \Spec k\to \ol{\sta M}_g$ induced by $C$.
In complete analogy,  our  focus here is 
the coarsening  
of the fibred 
product $\Spec k\ _b\!\times_{\sta p}\ol{\sta T}_{g,\ell}$, which we denote by $F_b$.
Notice that, by definition, 
this does not involve the automorphism group of $C$ (as it would have been the case if we 
had considered  $\Spec k\ _b\!\times_{\sta p}\ol{T}_{g,\ell}$ instead).
Of course the reader may also read this computation under the 
additional assumption that the curve $C$ has trivial automorphism 
group (in this case we are actually computing $\Spec k\ _b\!\times_{\sta p}\ol{T}_{g,\ell}$). 
 
Fix a stable curve $C$ with dual graph $\Gamma$, 
that is a geometric point $b\to \ol {\sta M}_g$. 
We check  
$\mathrm{length}(F_b)=\ell^{2g}$ 
for scheme-theoretic fibre $F_b$,
coarsening of the 
base change of $\ol{\sta T}_{g,\ell}\to \ol{\sta M}_g$
to 
$b$.
Each connected component of $F_b$ 
is a, possibly nonreduced, 
zero-dimensional scheme 
corresponding to an isomorphism 
class of a triple $(\ssC,\ssL,\phi\colon \ssL^{\otimes \ell}\to \cO)$
with a multiplicity $M$ and corresponding 
characteristic functions $\pmb \nu=(\nu_p)$. 
By Theorem \ref{thm:ghosts_compositel}, the length 
of such zero-dimensional scheme is 
$$\#\aut_C(\ssC)/\#\ul{\aut}\level=
\left(\prod\nolimits_{p\mid \ell} 
\prod\nolimits_{k=1}^{e_p} p^{\#E(\nu_p^k)}/ p^{\#V(\nu_p^k)-1}\right)=
\prod\nolimits_{p\mid \ell} \prod\nolimits_{k=1}^{e_p} 
p^{b_1(\Gamma (\nu_p^k))}.	
$$
(by the definition of $F_b$, we are not considering
the action of $\aut(C)$.)

The number of connected components is 
$$\sum_{\stackrel{M\in \ker\partial}{\pmb \nu_M=(\nu_p)}} 
\ell^{2p_g(C)} \prod\nolimits_{p\mid \ell} \prod\nolimits_{k=1}^{e_p}
p^{b_1(\Delta(\nu_p^k))}.$$
This happens because the multiplicities range over the elements of $\ker \partial$ 
by Proposition \ref{pro:Misclosed}.
Furthermore, once the multiplicity is specified, the  numbers of 
$\ell$th roots equal the summands appearing above. Indeed,  
we can count by taking a product on prime factors of $\ell$ and reduce to 
showing the claim for $\ell=p^e$. Then, we need to show that the number of $\ell$th roots sharing the 
same multiplicity $M$ is 
$$ \prod\nolimits_{k=1}^{e} p^{2p_g(C)} p^{b_1(\Delta(\nu_p^k))}.$$
This amounts to showing that the factors above are the numbers of 
 $p^k$th roots up to $p^{k-1}$st roots 
for any $k=1,\dots, e$. The factor $p^{2p_g(C)}$
counts $p^k$th roots up to $p^{k-1}$st roots on the normalisation.
The last factor involves  $\Delta(\nu_p^k)$, the subgraph of $\Gamma$ formed by the edges $e$
where $p^k\mid M(e)$. By \eqref{eq:twisters}, if $p^k$ does not divide $M(e)$,
iterated pullbacks 
via $(x,y)\mapsto (\xi_{r(e)}x, y)$  at the node $\sta n$ corresponding
to $e$ identify to each others all gluing data in $p^{k-1}\ZZ/p^k \ZZ$
along $\sta n$. Therefore the gluings up to automorphisms are determined by the subgraphs 
$\Delta(\nu_p^k)$ and their number is the number of elements of 
$H^1(\Delta(\nu_p^k)), \pmmu_{p^k}/\pmmu_{p^{k-1}})$. We get exactly the
power of $p$ appearing in the last factor of the displayed formula above.

Finally, since $\Gamma (\nu_p^k)$ is given by 
collapsing the subgraph $\Delta(\nu_p^k)$, the Betti numbers 
$b_1(\Delta(\nu_p^k))$ and $b_1(\Gamma (\nu_p^k))$ add up to $b_1(\Gamma)$; we get
$$\mathrm{length}(F_b)=\sum_{\stackrel{M\in \ker\partial}{\pmb \nu_M=(\nu_p)}} 
\ell^{2p_g(C)} \prod\nolimits_{p\mid \ell} \prod\nolimits_{k=1}^{e_p}
p^{b_1(\Delta(\nu_p^k))+ b_1(\Gamma (\nu_p^k))} = 
\sum_{\stackrel{M\in \ker\partial}{\pmb \nu_M=(\nu_p)}} 
\ell^{2p_g(C)}\ell^{b_1(\Gamma)}=\ell^{2g}.$$

\subsubsection{No-Ghosts}
Thm.~\ref{thm:ghosts_compositel} and Lem.~\ref{tsohg} imply a no (nontrivial) ghost criterion.
\begin{corollary}\label{cor:noghosts_compositel}
Let $\ell$ be any positive integer.
The group $\ul\aut_C\level$ is trivial if and only if
for any prime factor $p$ of $\ell$ the graph $\Gamma(\nu_p^{e_p})$
is a bouquet.
\qed \end{corollary}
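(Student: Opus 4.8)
The plan is to obtain the statement as a short combinatorial corollary of the group-theoretic computation of $\ul\aut_C\level$ carried out in Theorem~\ref{thm:ghosts_compositel} and Lemma~\ref{tsohg}. By Theorem~\ref{thm:ghosts_compositel}(iii) there is a canonical isomorphism $\ul\aut_C\level\cong G_{\pmb\nu}(\Gamma;\ZZ/\ell)$, so the triviality of the ghost group is equivalent to $G_{\pmb\nu}(\Gamma;\ZZ/\ell)=1$. Lemma~\ref{tsohg} identifies the latter group with $\bigoplus_{p\in\mathfrak P}\bigoplus_{k=1}^{e_p}(\pmmu_{p^{k}})^{\oplus\al_p^k}$, where $\al_p^k=\#V(\nu_p^{e_p-k+1})-\#V(\nu_p^{e_p-k})$; hence $G_{\pmb\nu}(\Gamma;\ZZ/\ell)=1$ if and only if $\al_p^k=0$ for every prime $p\mid\ell$ and every $k\in\{1,\dots,e_p\}$.

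First I would record that each $\al_p^k$ is nonnegative: by \eqref{eq:verticessetscontraction} the chain $V\twoheadrightarrow V(\nu_p^{e_p})\twoheadrightarrow\cdots\twoheadrightarrow V(\nu_p^{1})\twoheadrightarrow V(\nu_p^{0})=\{\bullet\}$ consists of surjections, so $\#V(\nu_p^{j})\ge\#V(\nu_p^{j-1})$ for all $j$. Telescoping over $k$ gives $\sum_{k=1}^{e_p}\al_p^k=\#V(\nu_p^{e_p})-\#V(\nu_p^{0})=\#V(\nu_p^{e_p})-1$. Therefore, for a fixed prime $p$, all the $\al_p^k$ ($1\le k\le e_p$) vanish precisely when $\#V(\nu_p^{e_p})=1$, that is, precisely when the contracted graph $\Gamma(\nu_p^{e_p})$ is a bouquet. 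Running this equivalence over all $p\in\mathfrak P$ yields $G_{\pmb\nu}(\Gamma;\ZZ/\ell)=1$ if and only if $\Gamma(\nu_p^{e_p})$ is a bouquet for every prime factor $p$ of $\ell$, which is the assertion.

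As an alternative packaging (the one alluded to in the introduction), one can instead invoke the order formula of Lemma~\ref{tsohg}, $\#G_{\pmb\nu}(\Gamma;\ZZ/\ell)=\tfrac1\ell\prod_{p\mid\ell}p^{\#V_p}$ with $\#V_p=\sum_{k=1}^{e_p}\#V(\nu_p^{k})$. Since $\ell=\prod_{p}p^{e_p}$ and each $\#V(\nu_p^{k})\ge1$ (the $\Gamma(\nu_p^{k})$ are nonempty connected graphs), this order equals $1$ if and only if $\#V_p=e_p$ for all $p$, i.e.\ if and only if $\#V(\nu_p^{k})=1$ for all $p$ and all $1\le k\le e_p$; in particular $\Gamma(\nu_p^{e_p})$ is a bouquet, and the converse implication again follows from the surjections \eqref{eq:verticessetscontraction} (a surjection from a singleton onto $V(\nu_p^{k})$).

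I do not anticipate a genuine obstacle: once Theorem~\ref{thm:ghosts_compositel} and Lemma~\ref{tsohg} are available, the statement is a one-line unwinding. The only points deserving care are the index bookkeeping $k\leftrightarrow e_p-k$ in the definition of $\al_p^k$ and the (elementary) remark that all the contracted graphs $\Gamma(\nu_p^{k})$ stay connected, so that the chain \eqref{eq:verticessetscontraction} is a chain of surjections of nonempty finite sets; this is exactly what makes the telescoping valid and lets one pass from $\#V(\nu_p^{e_p})=1$ down to $\#V(\nu_p^{k})=1$ for all smaller $k$.
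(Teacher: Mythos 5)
Your proof is correct and follows exactly the route the paper intends (it leaves the corollary with a bare \qed, citing Theorem~\ref{thm:ghosts_compositel} and Lemma~\ref{tsohg}): identify $\ul\aut_C\level$ with $G_{\pmb\nu}(\Gamma;\ZZ/\ell)$, read off the elementary divisors $\al_p^k$, and telescope $\sum_{k=1}^{e_p}\al_p^k=\#V(\nu_p^{e_p})-1$ using the surjections \eqref{eq:verticessetscontraction}. Both your main argument and the alternative via the order formula are sound.
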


\begin{remark} \label{rem:conditions_comments} In analogy with the case where $\ell$ is prime,
one may consider the condition
``the contraction $\Gamma'$ of $\{e\mid
\ell$ divides $M(e)\}$ is a bouquet'', which clearly implies 
the above no-ghosts condition.
The converse is false:
for $\ell=6$, consider $\Gamma$ with vertices $v_1,v_2$,
edges $e_1,e_2, e_3$
 going from $v_1$ to $v_2$, set $M(e_i)=i$.\end{remark}

\begin{remark}\label{rem:generalisations}
 Lem.~\ref{lem:noqrefl_ghosts},
 Prop.~\ref{pro:quasireflections} generalise \emph{verbatim},
and, by Cor.~\ref{cor:noghosts_compositel}, the same holds for \
Thm.~\ref{thm:smooth} once we replace ``$\Gamma(\nu)$ is a bouquet'' by ``$\Gamma(\nu_p^{e_p})$ is
a bouquet for any prime $p\mid\ell$''. 
\end{remark}
We can also state the generalisation as follows.
\begin{theorem}\label{thm:smooth_compositel}
Let $g\ge 4$ and let $\ell\ge 1$.
The point representing $\level$ in $\rr_{g,\ell}$ is smooth if and only if
the group $\aut'(C)$ is generated by ETIs of $C$
and the graphs $\Gamma(\nu_p^{e_p})$
(obtained by contracting
the edges $e$ for which $M(e) \in \ZZ/p^{e_p}$ vanishes) are bouquets
for any prime $p\mid\ell$.
\end{theorem}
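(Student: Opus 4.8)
The plan is to run the argument of Theorem~\ref{thm:smooth} for prime level almost verbatim, substituting the family of contractions $\Gamma(\nu_p^{e_p})$, $p\mid\ell$, for the single contraction $\Gamma(\nu)$, and invoking the composite-level ghost computation of Theorem~\ref{thm:ghosts_compositel} and its no-ghosts Corollary~\ref{cor:noghosts_compositel} in place of Proposition~\ref{pro:ghosts} and Corollary~\ref{cor:noghosts}. First I would record, from Fact~\ref{fact:singularities}, that the point representing $\level$ is smooth if and only if $\aut\level$ is generated by elements acting on $\defo\level$ as the identity or as a quasireflection; since the quasitrivial automorphisms act trivially, I may replace $\aut\level$ by $\ul\aut\level=\aut\level/\pmmu_\ell$ throughout. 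For $g\ge 4$, Remark~\ref{rem:faithful} (whose proof goes through verbatim, as Lemma~\ref{lem:noqrefl_ghosts} does) shows $\ul\aut\level$ acts faithfully on $\defo\level$, so no nontrivial element acts as the identity; and Proposition~\ref{pro:quasireflections}, which likewise extends verbatim, identifies the elements acting as quasireflections with the ETIs of $\sta C$. This reduces the theorem to the equivalence: $\ul\aut\level$ is generated by ETIs of $\sta C$ if and only if $\aut'(C)$ is generated by ETIs of $C$ and each $\Gamma(\nu_p^{e_p})$ is a bouquet.

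For the implication ($\Leftarrow$), I would use Corollary~\ref{cor:noghosts_compositel}: all $\Gamma(\nu_p^{e_p})$ being bouquets forces $\ul\aut_C\level=1$, so the exact sequence~\eqref{eq:ghostsandlifts} makes coarsening an isomorphism $\ul\aut\level\xrightarrow{\sim}\aut'(C)$. I would then lift each ETI of $C$ generating $\aut'(C)$ through this isomorphism and check that the lift is an ETI of $\sta C$: by Proposition~\ref{pro:Misclosed} the separating node of the genus-one tail carries trivial stabiliser, so the tail is either a scheme $(E,n)$ or the stack $(\sta E,n)$ of Example~\ref{exa:level_on_elltails}, each equipped with a canonical involution $\sta i$ which, as observed in Example~\ref{exa:stackytails}, fixes the isomorphism class of $\sta L\!\mid_{\sta E}$; thus Definition~\ref{defn:elltail} is met and $\ul\aut\level$ is generated by ETIs of $\sta C$. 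For ($\Rightarrow$), I would observe that a nontrivial composition of ETIs of $\sta C$ has nontrivial coarsening, whereas a nontrivial ghost restricts to the identity on every irreducible component (Remark~\ref{rem:fixingirredcompl}) and hence has trivial coarsening; therefore $\ul\aut_C\level=1$, Corollary~\ref{cor:noghosts_compositel} gives the bouquet condition, $\ul\aut\level\cong\aut'(C)$ via coarsening, and since the coarsening of an ETI of $\sta C$ is visibly an ETI of $C$, the group $\aut'(C)$ is generated by ETIs of $C$.

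The step I expect to be the main obstacle is the verification inside ($\Leftarrow$) that the canonical involution of the stacky tail $(\sta E,n)$ genuinely lifts an ETI of $C$ to an ETI of $\sta C$ in the sense of Definition~\ref{defn:elltail}, i.e.\ that it preserves the restricted line bundle up to isomorphism. This rests on the $\sta i$-action on $\Pic(\sta E)[\ell]$, which is pinned down by the explicit description of the torsion line bundles on $\sta E$ through $\sta L_{\ram}$ and $\sta L_{\etale}$ in Examples~\ref{exa:level_on_elltails} and~\ref{exa:stackytails}, where $\sta i^*$ is shown to act trivially on both generators. Everything else is a transcription of the prime-level proof, with Corollary~\ref{cor:noghosts_compositel} carrying the combinatorial content.
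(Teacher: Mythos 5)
Your plan correctly mirrors the paper's intended strategy: invoke Fact~\ref{fact:singularities}, the verbatim generalisations of Lemma~\ref{lem:noqrefl_ghosts} and Proposition~\ref{pro:quasireflections}, replace Corollary~\ref{cor:noghosts} by Corollary~\ref{cor:noghosts_compositel}, and then run the two directions as in Theorem~\ref{thm:smooth}. The $(\Rightarrow)$ direction is fine. However, your $(\Leftarrow)$ direction rests on the assertion that the canonical involution $\sta i$ of the stacky tail ``fixes the isomorphism class of $\sta L\!\mid_{\sta E}$'', for which you cite Example~\ref{exa:stackytails}. That example treats one specific $2$-torsion bundle; the claim is false in general. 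Proposition~\ref{pro:autelltail}(iv) exhibits exactly the failure: when $\sta E=[\wt E/\pmmu_2]$ carries an order-$4$ faithful line bundle $\ssL\!\mid_{\sta E}=(\alpha,\ssL_{\ram})$ with $\alpha$ primitive, the canonical involution $\sta i$ does \emph{not} preserve $\ssL\!\mid_{\sta E}$; the only nontrivial automorphism of $(\sta E,n)$ preserving it is $\sta i\circ\sta g$, which by Definition~\ref{defn:elltail} is not an ETI of $\ssC$ and, by Remark~\ref{rem:explicitautomstackytail_3} and the unnumbered Example following Corollary~\ref{cor:rsbt_for_us}, acts on $\defo\level$ as $-\mathbb I_2\oplus\id$, i.e.\ fixing a codimension-$2$ subspace — not a quasireflection.

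Concretely, take $\ell=4$, $\sta C=\sta E\cup X$ with $X$ a generic genus-$(g-1)$ curve ($g\ge 4$), a trivial-stabiliser separating node $n$, $\ssL\!\mid_X=\cO$, and $\ssL\!\mid_{\sta E}$ as in case (iv). The multiplicity is $M(e_n)=0$ and $M(e_{\mathrm{loop}})=2$, so each $\Gamma(\nu_p^{e_p})$ is a single vertex with one loop — a bouquet — and $\ul\aut_C\level=1$ by Corollary~\ref{cor:noghosts_compositel}. Moreover $i\in\aut'(C)$ (lifted by $\sta i\circ\sta g$), so $\aut'(C)=\langle i\rangle$ is generated by ETIs of $C$. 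Both conditions of the theorem's right-hand side hold, yet $\ul\aut\level=\langle\sta i\circ\sta g\rangle$ is generated by an element that is not a quasireflection, so by Fact~\ref{fact:singularities} the point is singular. Thus the bare lifting step in your $(\Leftarrow)$ argument breaks for $4\mid\ell$: a given ETI of $C$ generating $\aut'(C)$ need not lift to an ETI of $\ssC$ through the coarsening isomorphism. Note that this objection applies equally to the paper's own one-line justification (``the ETIs generating $\aut'(C)$ lift canonically to ETIs generating $\aut\level$'' works as stated only when $\ell$ is prime, where case (iv) of Proposition~\ref{pro:autelltail} cannot occur). A correct argument for composite $\ell$ must at least track whether each generating ETI on a stacky tail lands in cases (i)--(iii) rather than (iv) of Proposition~\ref{pro:autelltail}, or the statement of the theorem must be adjusted to exclude the case-(iv) tails.
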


\subsubsection{Automorphism group of level structures over stack-theoretic elliptic tails.}
We describe the action of the automorphism
group of the stack-theoretic elliptic
tail of Exa.~\ref{exa:level_on_elltails} on $\Pic$.
We are interested in level structures 
on a curve with an elliptic 
tail $\sta E$; it is natural to fix a divisor $l$ of 
$\ell$, which should be thought of as the 
the order of the restriction of the level-$\ell$ 
structure to $\sta E$.
We refer the reader to Example \ref{exa:stackytails} 
for the study of level structures on $\sta E$ when $\sta E$ 
is an irreducible 
twisted curve with a single node.

The $1$-pointed $1$-nodal
twisted genus-$1$ curve
$(\sta E,n)$ is given by the stack-theoretic quotient of
$\wt E=\PP^1/(0\sim \infty)$ by $\pmmu_r$ where $r$ 
divides $l$ and $\pmmu_r$ acts by 
multiplication as usual.
Consider $\sta p\colon \wt E\to \sta E$, 
the isotypical decomposition 
$\sta p_*\cO=\bigoplus _{ \chi\in \ZZ/r} \sta L_\chi$, 
and the $l$-torsion line bundle 
$\ssL_{\ram}:=\sta L_{\chi=1}$ on $\sta E$
with $\leveliii_{\ram}\colon \ssL_{\ram}^{\otimes l}\to \cO$ 
obtained by taking the $(l/r)$th tensor power of
the isomorphism $\ssL_{\ram}^{\otimes r}\cong \cO$.
We also consider the $l$-torsion line bundle  $\sta L_{\etale}$, pull-back via $\epsilon_{\sta E}\colon \sta E\to E$
of the sheaf of regular functions $f$ on the normalisation satisfying $f(\infty)=\xi_{l}f(0)$ for an $l$th primitive root of unity $\xi_l$.

We have
\begin{equation}\label{eq:picelltail}\Pic(\sta E)[l] \cong 
\pmmu_l\oplus \ZZ/r. 
\end{equation}
The second summand has the distinguished generator 
$\ssL_{\ram}:=\sta L_1$. The first 
summand is generated by $\sta L_{\etale}$, 
defined after choosing a primitive root of unity $\xi_l$. 

We have 
$$\aut(\sta E,n)=
\{\sta a\in \aut(\sta E)\mid \sta a(n)=n\}\cong\pmmu_2\oplus \pmmu_r,$$
where the first summand is generated by 
the distinguished involution $\sta i$, whereas 
the second summand is generated by $\sta g$, defined 
after choosing an $r$th root of unity $\xi_r$ 
by the local picture 
$\sta g\colon (x,y)\mapsto (\xi_r x,y)$ at $n$,
and the condition $\sta g|_{\sta E\setminus \{ n\}}=\id$.

Then $\sta i$ operates on $\Pic(\sta E)[l]$ as the 
passage to the inverse
$$\sta i\colon (\al \in \pmmu_l, k\in \ZZ/r)\mapsto 
(\al^{-1}, -k).$$
On the other hand any given 
root of unity $\zeta  \in\pmmu_r$ operates on 
$\Pic(\sta E)[l]$ as 
$$\zeta \colon (\al \in \pmmu_l, k\in \ZZ/r)\mapsto 
(\al k(\zeta), k),$$
where the product between $\al\in \pmmu_l$ and $k(\zeta)\in \GG_m$ is 
obviously taken within $\GG_m$.

More explicitly, in terms of the explicit bases mentioned above, 
we have the additive groups
$\Pic(\sta E)[l]\cong \langle \ssL_{\etale}, \ssL_{\ram}\rangle
=\ZZ/l\oplus \ZZ/r$
and $\aut(\sta E,n)\cong \langle \sta i, \sta g\rangle
=\ZZ/2\oplus \ZZ/r$
and the action of $(a_1,a_2)=\sta i^{a_1}\circ \sta g^{a_2}\in
\aut(\sta E,n)$ on the line bundle
$(k_1,k_2)= (\sta L_{\emph{\etale}})^{\otimes k_1} \otimes \sta L_{\ram}^{\otimes k_2}$ in $\Pic(\sta E)[l]$
yields
\begin{equation}\label{eq:actionontail}
(a_1,a_2)\cdot (k_1,k_2)= ((-1)^{a_1}k_1+(l/r)a_2k_2, (-1)^{a_1}k_2),                                                                  \end{equation}
where $a_2k_2$ is the product in $\ZZ/r$.

In view of the study of ghost automorphisms of level-$l$ curves
we consider a faithful order-$l$ line bundle $\ssL$ on $\sta E$;
in other words,
we consider an order-$l$ 
element $(\al,k)\in \pmmu_l\oplus \ZZ/r\cong \Pic(\sta E)[l]$
where $k$ is prime to $r$ (faithfulness).\begin{proposition}\label{pro:autelltail}
The complete list of
nontrivial automorphisms $(\sigma\in \pmmu_2,\zeta\in \pmmu_r)\in 
\aut(\sta E,n)$ fixing the isomorphism class of the 
order-$l$ line bundle $\ssL$ 
is as follows
\begin{enumerate}
\item $l=1$, $r=1$,  $\ssL=\cO$, and $(\sigma,\zeta)=(-1,1)$;
\item $l=2$, $r=1$, $\ssL\in \Pic[2]\setminus \{\cO\}$, 
and $(\sigma,\zeta)=(-1,1)$;
\item $l=2$, $r=2$, $\ssL= (1,\ssL_{\ram})$ or 
$(-1,\ssL_{\ram})\in\Pic[2]=\pmmu_2\oplus \ZZ/2$, and $(\sigma,\zeta)=(-1,1)$;
\item $l=4$, $r=2$, $\ssL= (\al,\ssL_{\ram})\in\Pic[4]=\pmmu_4\oplus \ZZ/2$ ($\al$ primitive), and $(\sigma,\zeta)=(-1,-1)$.
\end{enumerate}
\end{proposition}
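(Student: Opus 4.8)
The plan is to enumerate all faithful order-$l$ line bundles $\ssL$ on $\sta E$ together with their nontrivial stabilisers inside $\aut(\sta E,n)\cong \pmmu_2\oplus\pmmu_r$, using the explicit action formula \eqref{eq:actionontail}. Write $\ssL=(k_1,k_2)\in\ZZ/l\oplus\ZZ/r\cong \Pic(\sta E)[l]$; faithfulness means $\gcd(k_2,r)=1$, and the order of $\ssL$ being exactly $l$ means $\lcm(\ord(k_1),\ord(k_2))=l$, where $\ord(k_2)=r$ by faithfulness, so we simply need $\ord(k_1)\mid l$ and $\ord(k_1)\ge l/r$ — in practice $r\mid l$ and $k_1$ can range over $\ZZ/l$. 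An automorphism $(a_1,a_2)$ (with $a_1\in\ZZ/2$, $a_2\in\ZZ/r$) fixes $\ssL$ iff $((-1)^{a_1}k_1+a_2k_2,(-1)^{a_1}k_2)=(k_1,k_2)$ in $\ZZ/l\oplus\ZZ/r$.

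First I would dispose of the second coordinate: the equation $(-1)^{a_1}k_2=k_2$ in $\ZZ/r$ forces, since $k_2$ is a unit, either $a_1=0$, or $a_1=1$ and $2k_2=0$ in $\ZZ/r$, i.e. $r\mid 2$, i.e. $r\in\{1,2\}$. This already shows $r\le 2$ for any nontrivial automorphism, and it also shows that whenever a nontrivial automorphism exists we may take $a_1=1$ (if $a_1=0$ and $a_2\ne 0$, the first-coordinate equation reads $a_2k_2=0$ in $\ZZ/l$; since $k_2$ generates $\ZZ/r=(l/r)\ZZ/l$ and $a_2\in\ZZ/r$, this forces $a_2k_2=0$ in $\ZZ/r$ hence $a_2=0$ — so the ``pure ghost'' part $\sta g^{a_2}$ alone never stabilises a faithful bundle). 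So every nontrivial stabiliser element has the form $\sta i\circ\sta g^{a_2}$, and the first-coordinate equation becomes $-k_1+a_2k_2=k_1$ in $\ZZ/l$, i.e. $2k_1=a_2k_2$ in $\ZZ/l$, together with the second-coordinate constraint $2k_2=0$ in $\ZZ/r$ (automatic once $r\mid 2$).

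Then I would split into the cases $r=1$ and $r=2$. For $r=1$: $k_2=0$, faithfulness of $\ssL|_n$ is vacuous, and the surviving equation is $2k_1=0$ in $\ZZ/l$, so $l\mid 2$. This yields $l=1,k_1=0$ ($\ssL=\cO$, stabiliser generated by $\sta i$) — case (i); and $l=2$, $k_1=1$ ($\ssL\in\Pic[2]\setminus\{\cO\}$, stabiliser $\sta i$) — case (ii). For $r=2$: $k_2$ is the nonzero element of $\ZZ/2$, i.e. $\ssL_{\ram}$, and the equation is $2k_1=a_2\cdot(l/2)$ in $\ZZ/l$ with $a_2\in\{0,1\}$. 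If $a_2=0$: $2k_1=0$ in $\ZZ/l$, so $l\mid 2$; combined with $r=2\mid l$ this gives $l=2$, and $k_1\in\{0,1\}$, i.e. $\ssL=(1,\ssL_{\ram})$ or $(-1,\ssL_{\ram})$ — wait, with $l=2$ these are $(0,\ssL_{\ram})$ and $(1,\ssL_{\ram})$; matching the paper's notation ``$(1,\ssL_{\ram})$ or $(-1,\ssL_{\ram})$'' in $\pmmu_2\oplus\ZZ/2$ — and the stabiliser is $\sta i=(\sigma,\zeta)=(-1,1)$ — case (iii). If $a_2=1$: $2k_1=l/2$ in $\ZZ/l$, which has a solution iff $l/2$ is even and then forces $k_1$ to have order exactly $l$ (primitive); combined with $r=2\mid l$ and the order-$l$ condition, the smallest possibility is $l=4$, $k_1=1$ (primitive $\al\in\pmmu_4$), $\ssL=(\al,\ssL_{\ram})$, with stabiliser $\sta i\circ\sta g=(\sigma,\zeta)=(-1,-1)$ — case (iv); for $l=8,16,\dots$ one checks $k_1$ would have to be primitive but then $\ssL$ is not killed by any nontrivial automorphism unless $l=4$, because $2k_1=l/2$ with $k_1$ primitive forces $l/2=2$, i.e. $l=4$ — so no larger $l$ occurs.

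The main obstacle I anticipate is purely bookkeeping: reconciling the two descriptions of $\Pic(\sta E)[l]$ — the ``intrinsic'' one $\pmmu_l\oplus\ZZ/r$ from \eqref{eq:picelltail} versus the additive one $\ZZ/l\oplus\ZZ/r$ with bases $\ssL_{\etale},\ssL_{\ram}$ — and keeping the identification of the automorphism $(\sigma,\zeta)\in\pmmu_2\oplus\pmmu_r$ with the pair $(a_1,a_2)\in\ZZ/2\oplus\ZZ/r$ consistent with \eqref{eq:actionontail}, so that the final four cases come out in exactly the form stated (in particular getting the sign $\zeta=-1$ rather than $\zeta=1$ in case (iv) right, and confirming that in cases (i)--(iii) the nontrivial automorphism is $\sta i$ with $\zeta=1$). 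Once the dictionary is fixed, each case is a one-line congruence computation, and the finiteness of the list (no $l>4$) follows immediately from the observation that $a_2\in\{0,1\}$ forces $2k_1\in\{0,l/2\}$, pinning $l$ to a divisor of $4$.
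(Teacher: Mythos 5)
Your proof is correct and follows essentially the same route as the paper's: you first dispose of the second coordinate of the action to force $\sigma=-1$ and $r\le 2$, then enumerate via the first-coordinate congruence, just writing the computation in the additive coordinates of \eqref{eq:actionontail} where the paper uses the multiplicative presentation $\Pic(\sta E)[l]\cong\pmmu_l\oplus\ZZ/r$ and solves $\al^{-1}k(\zeta)=\al$, $-k=k\bmod r$ directly. The only wobble is in the $r=2$, $a_2=1$ subcase, where you should make explicit that it is the order-$l$ requirement $\lcm(\ord k_1,2)=l$ together with $4\mid l$ (needed for $2k_1=l/2$ to be solvable) that forces $k_1$ primitive, before the gcd comparison pins $l=4$ — but the conclusion is the same as the paper's cleaner observation that $\al^2=\zeta$ with $\zeta=-1$ makes $\al$ a primitive fourth root of unity.
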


\begin{proof}
There are no nontrivial solution $(\sigma, \zeta)$ of the form 
$(1,\zeta)$, because this yields $\al k(\zeta)=\al$, which implies $\zeta=1$ ($\ker(k)=1$). Then,
we look for  solutions $(\sigma, \zeta)$ of the form 
$(-1,\zeta)$; hence we solve 
the equations $\al^{-1} k(\zeta)=\al$ 
and $-k=k\mod r$ (with $k$ prime to $r$). 
Then $k=0$ (and $r=1$) or $k=r/2$ (and $r=2$). 
Cases (i) and (ii) arise from $k=0$, 
which yields $\zeta=1$ and $\al=1$ (case (i))
or $\al=-1$ (case (ii)).
Cases (iii) and (iv) arise from $k=1$, 
which yields $\zeta=1$ and $\al^2=1$ (case (iii))
or $\zeta=-1$ and $\al^2=-1$ (case (iv)).
\end{proof}

\begin{remark}\label{rem:explicitautomstackytail_1_2}
Notice that in the cases 
(i),(ii), (iii), the automorphism is the
 canonical involution $\sta i$. 
 This may be thought of 
 as the restriction on an elliptic tail 
$(\sta E,n)$ 
of the automorphism of a
  level-$\ell$ curve $\level$; then, the ETI 
fixing $\ol{\sta C\setminus \sta E}$ and yielding 
$\sta i$ on $\sta E$
operates on $\defo\level$ as 
the quasireflection $(-\mathbb I_1) \oplus \id$.

Again, if we choose explicit bases 
$\Pic(\sta E)[l]\cong \langle \ssL_{\etale}, \ssL_{\ram}\rangle
=\ZZ/l\oplus \ZZ/r$
and $\aut(\sta E,n)\cong \langle \sta i, \sta g\rangle
=\ZZ/2\oplus \ZZ/r$ 
we can explicitly realise the
fixed line bundle: $\cO$ in case (i), 
$\ssL_{\etale}$ in case (ii), and 
 $\ssL_{\ram}$ and $\ssL_{\ram}\otimes \ssL_{\etale}$
 in case (iii).
\end{remark}
\begin{remark}\label{rem:explicitautomstackytail_3}
In case (iv), the automorphism 
is the involution obtained as the composition of $\sta i$
with the order-$2$ 
ghost $\sta g$ operating locally 
at the node as $(x,y)\mapsto (-x,-y)$.  
Again
$(\sta E,n)$ with this automorphism and its fixed 
$4$ torsion bundle, may be thought of as 
the elliptic tail of a  level-$\ell$ 
curve $\level$. The involution 
fixing $\ol{\sta C\setminus \sta E}$ and yielding 
$\sta i\circ \sta g$ on $\sta E$ does 
not act as a quasireflection (see
Prop.~\ref{pro:quasireflections}). Indeed, the action 
on $\defo\level/\defo(\leveli,\levelii,\levelii,\Sing(\leveli))$ 
is nontrivial only on the parameter $\tau_1$
smoothing $n$: 
$\tau_1 \mapsto -\tau_1$.
On the other hand,  on 
$\defo(\leveli,\levelii,\levelii,\Sing(\leveli))$ 
the action is trivial except on the parameter $\tau_2$   
deforming only the tail: we have 
$\tau_2\mapsto -\tau_2$. 
Therefore the involution fixes a codimension-$2$ subspace 
of $\defo\level$ and operates as $-\mathbb I_2\oplus \id$.
Finally, when 
we choose the above explicit bases of $\Pic$ and $\aut$, 
we may realise the
level-$4$ structure on $(\sta E,n)$ as $\sta L_{(\sta E,n)}:=\ssL_{\ram}\otimes \ssL_{\etale}$. Indeed, we have 
$$(\sta i\circ \sta g)^*(\sta L_{(\sta E,n)})=\sta i^*(\sta g^*\ssL_{\ram}\otimes \sta g^*\ssL_{\etale})\stackrel{\small{\eqref{eq:actionontail}}}{=}
\sta i^*(\ssL_{\ram}\otimes \ssL_{\etale}^{\otimes 2}  \otimes \ssL_{\etale})=
\sta i^*(\ssL_{\ram}^{\vee}\otimes \ssL_{\etale}^{\vee})=
\sta L_{(\sta E,n)}.$$
\end{remark}

\subsection{Noncanonical singularities}
The problem of describing the locus of 
noncanonical singularities
within the moduli space of 
level-$\ell$ curves  
is treated locally: we systematically study 
the action of $\aut\level$ on $\defo\level$. 
By the \rsbt\ criterion, the age invariant introduced below
detects in terms of rational numbers 
the cases where noncanonical singularities occur.

Throughout the rest of the paper we use the notation $\{x\}$, which 
stands for the fractional part of a real number $x$; in other words, we set
$\{x\}:=x-\lfloor x\rfloor$.

Although we do not use this point of view in this paper,
we mention in passing that Abramovich, Graber, and Vistoli
have introduced in \cite{AGV}, a global age grading function
defined on the cyclotomic inertia stack
$$\sta{AGE} \colon I_{\pmmu}(\overline{\sta R}_{g,\ell})\longrightarrow \QQ_{\ge 0}.$$
One could state our description of the noncanonical singularities
locus as a description 
of the locus ${\sta{AGE}}^{-1}(]0,1[)$ within 
the cyclotomic inertia stack. We are indebted to the authors 
of \cite{AGV} for this point of view; nevertheless, the 
following introduction of the age grading is elementary and can be read without 
referring to \cite{AGV}.

\subsubsection{The age of representations of $\pmmu_r$.}
We consider the group $\pmmu_r$ for any 
positive integer $r$ and we define an additive 
age grading over the representation ring $R\pmmu_r$.
Since  $\mathrm{Hom}(\pmmu_r,\GG_m)$ is 
canonically identified with $\ZZ/r$,
we can define the age grading of the character 
$k\in \ZZ/r$ as $k/r\in \QQ$.
Since the characters  
in $\mathrm{Hom}(\pmmu_r,\GG_m)$
form a basis for the representation ring $R\pmmu_r$, 
this yields an additive homomorphism
$\age\colon R\pmmu_r\to \QQ.$
\subsubsection{Cyclotomic injections and group elements.}
Let $G$ be a finite group. 
When working over the complex numbers there is 
a canonical 
identification between the set of 
group elements and the set of cyclotomic 
injections 
\begin{equation}\label{eq:identif_injection-elem}
\{g\mid g\in G\}\overset{1:1}\longleftrightarrow 
\bigsqcup\nolimits_{r\ge 1} \{\gamma\mid \gamma \colon \pmmu_r \hookrightarrow G\}.\end{equation}
The identification is the obvious one: to an 
element $g\in G$ of order $r$ 
we attach the homomorphism 
$\gamma\colon \pmmu_r\hookrightarrow G$ 
mapping $\exp(\frac{2\pi i}r)$ to $g$; conversely, 
we set $g=\gamma(\exp(\frac{2\pi i}r))$. 

Over any base field this identification depends 
on the choice of a primitive 
root of unity $\xi_r\in \pmmu_r$ for any positive integer $r$. 
Below, we define---without the need of 
any such choice---the age grading of cyclotomic injections
within a group $G$ operating on $V=\mathbb A^m$.

\subsubsection{The age grading for a $G$-representation}
Consider a $G$-representation: $\rho\colon G\to \GL(V)$
where $V= \mathbb A^m$.
Any injective homomorphisms 
$\gamma\colon \pmmu_r\hookrightarrow G$
yields, by composing with $\rho$, 
a  $\pmmu_r$-representation. We get an 
invariant of the $G$-representation
\begin{align}\label{eq:ageinvariantofGrep}
\age_V\colon \bigsqcup\nolimits_{r\ge 1} \{\gamma\mid \gamma \colon \pmmu_r \hookrightarrow G\}& \longrightarrow \QQ\\
\gamma&\longmapsto \age(\rho\circ\gamma).\nonumber 
\end{align}
Explicitly, 
$\age_V(\gamma)$ is defined as follows: for any 
primitive root of unity $\zeta$ in $\pmmu_r$ the matrix 
corresponding to the action of 
$\gamma(\zeta)$ on $V$  is conjugate  to 
$\diag((\zeta)^{a_1} , \ldots ,(\zeta)^{a_m})$ and 
we have  $$\age_V(\gamma)=
\frac{a_1}
r + \ldots + \frac{a_m}
r\in \QQ.$$
The coefficients $a_1,\dots, a_m$ are uniquely determined 
by imposing $0\le a_i<r$ and do 
not depend on the choice of the primitive root of unity $\xi_r$.

Over the complex numbers, 
the notion of group element and that 
of cyclotomic injection are interchangeable and 
$\age_V$ can be defined directly on $G$.
Moreover, the explicit definition above can 
be given by fixing $\zeta:= \exp(\frac{2\pi i}r )$. 
\subsubsection{The \rsbt{} criterion.} 
Assume that the point at the origin of
$V$ modulo $G\in \GL(V)$ is singular.
Such a singularity is
\emph{canonical} if and only if any pluricanonical form on the smooth locus
extends to any desingularisation of ${V/G}$. 
In other words, for all $q\in \ZZ$
sufficiently high and divisible, we have
$$\Gamma((V/G)^{\reg}, \omega^{\otimes q})=\Gamma(\widehat{V/G}, \omega^{\otimes q})
\qquad \text{for any desingularisation $\widehat{V/G}\to {V/G}$.} $$

\begin{theorem}[\rsbt{} criterion \cite{re1980, ta1982, re2002}]
\label{thm:rsbt}
Let us assume that the finite 
group $G$ 
operates on $V$ without quasireflections.
The scheme-theoretic quotient $V/G$ has a
noncanonical singularity at the origin
if and only if the image of 
$\age_V$ (see \eqref{eq:ageinvariantofGrep}) 
intersects $]0,1[$.
\end{theorem}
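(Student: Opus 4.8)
The plan is to recover this classical criterion (due in this form to \cite{re1980,ta1982,re2002}) from the behaviour of discrepancies under quotient maps; I will indicate how the pieces fit, not redo the computation. Write $X=V/G$ and let $\pi\colon V\to X$ be the quotient. The first observation to record is the translation of the statement: by definition, every pluricanonical form on $X^{\reg}$ extends to every desingularisation of $X$ (for every $q\ge 0$) if and only if $X$ has \emph{canonical} singularities, i.e.\ if and only if $a(E,X)\ge 0$ for every prime divisor $E$ over $X$. Moreover, the hypothesis that $G$ acts without quasireflections is exactly what makes $\pi$ \emph{étale in codimension one}: thus $X$ is normal, $K_V=\pi^{[*]}K_X$ (with no ramification divisor), and the Riemann--Hurwitz-type formula $a(\tilde v,V)+1=e\cdot\bigl(a(v,X)+1\bigr)$ holds for a divisorial valuation $\tilde v$ of $V$ restricting to $v$ on $X$ with ramification index $e$. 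So the task is to show: $X$ is not canonical $\iff$ the image of $\age_V$ meets $\,]0,1[$.

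Next I would settle the cyclic case $G=\langle g\rangle$, which is the concrete core. Diagonalising, $g=\diag(\xi_r^{a_1},\dots,\xi_r^{a_m})$ with $r=\ord(g)$ and $0\le a_i<r$, and $X$ is the affine toric variety of the cone $\sigma=(\mathbb{R}_{\ge 0})^m$ in the overlattice $N=\ZZ^m+\ZZ\cdot\tfrac1r(a_1,\dots,a_m)$. A regular subdivision of $\sigma$ is a toric resolution, and the exceptional divisor attached to a primitive lattice point $w$ in the interior of $\sigma$ has discrepancy $\langle m_\sigma,w\rangle-1$, where $m_\sigma$ is the linear functional taking value $1$ on each primitive ray generator of $\sigma$. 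For $w=\tfrac1r(a_1,\dots,a_m)$ this discrepancy is $\age_V(g)-1$, and the lattice points arising from the nontrivial powers $g^{j}$ likewise give $\age_V(g^{j})-1$; a short argument on the fan shows these realise the minimal discrepancy over all exceptional toric divisors. Hence $X$ is canonical iff $\age_V(h)\ge 1$ for all $h\in\langle g\rangle\setminus\{1\}$, and not canonical iff some $\age_V(h)\in\,]0,1[$ — note $\age_V(h)\neq 0$ for $h\neq 1$, since a non-quasireflection has at least two nontrivial eigenvalues, so $\age_V(h)\ge 2/\ord(h)>0$. This is the asserted equivalence when $G$ is cyclic.

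Finally I would reduce the general case to the cyclic one; this is the step I expect to be the main obstacle, and it is where the no-quasireflection hypothesis is used essentially. For one direction, if $\age_V(g)\in\,]0,1[$ for some $g\in G$, then $V/\langle g\rangle$ is not canonical by the previous paragraph, and since the intermediate morphism $V/\langle g\rangle\to V/G$ is again étale in codimension one, the discrepancy formula $a(E_A,A)+1=e\cdot(a(E_B,B)+1)$ (with $A=V/\langle g\rangle$, $B=X$, $e\ge 1$) turns a divisor with negative discrepancy over $V/\langle g\rangle$ into one over $X$; so $X$ is not canonical. Conversely, if $X$ is not canonical, pick a divisorial valuation $v$ of $X$ with $-1<a(v,X)<0$, lift it to a divisorial valuation $\tilde v$ of $V$, and let $I\le G$ be its inertia group; since $I$ fixes $\tilde v$ and acts trivially on its residue field, it acts faithfully on the one-dimensional conormal of the associated divisor on a suitable model, hence $I$ is cyclic. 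Because $\pi$ is étale in codimension one, $a(\tilde v,V)+1=|I|\cdot(a(v,X)+1)$; applying the same identity to the intermediate map $V\to V/I$ along $\tilde v$ (whose inertia there is again $I$) gives $a(\tilde v,V)+1=|I|\cdot(a(v|_{V/I},V/I)+1)$, so $a(v|_{V/I},V/I)=a(v,X)<0$ and $V/I$ is not canonical. By the cyclic case there is $h\in I\subseteq G$ with $\age_V(h)\in\,]0,1[$. Under the identification \eqref{eq:identif_injection-elem} of group elements with cyclotomic injections, which carries $\age_V$ to $h\mapsto\age_V(h)$, this says exactly that the image of $\age_V$ meets $\,]0,1[$, completing the proof. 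The genuinely delicate points are the cyclicity of the inertia group and the bookkeeping of ramification indices in the two displayed discrepancy identities; the toric computation in the second paragraph, though the source of the formula $\age_V(g)-1$, is routine fan combinatorics.
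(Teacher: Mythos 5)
The paper does not give a proof of this statement: it is quoted as a black box, with the citations \cite{re1980,ta1982,re2002} standing in for the argument, and it is then used directly via Corollary \ref{cor:rsbt_for_us}. So there is no proof by the authors to compare against; I can only evaluate your sketch on its own terms.

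Your argument is essentially the standard proof found in those references (toric computation of discrepancies in the cyclic case, then reduction to the cyclic case via inertia groups, with the no-quasireflection hypothesis entering exactly where you say, through \'etaleness in codimension one). It is substantively correct. Three points are asserted rather than argued and would each need a line if written in full. First, in the cyclic case, what one actually uses is that box points detect whether the minimal discrepancy is negative, not that they achieve it; the reason is that any interior lattice point of $\sigma$ is a box point plus a point of $\ZZ^m\cap\sigma$, so $m_\sigma$ cannot decrease, and hence a negative-discrepancy toric divisor exists iff some box point has $m_\sigma<1$. Second, the claim that the intermediate morphism $V/\langle g\rangle\to V/G$ is again \'etale in codimension one is not immediate from the hypothesis on $G$ alone: it follows from multiplicativity of ramification indices along $V\to V/\langle g\rangle\to V/G$ together with the \'etaleness in codimension one of both maps from $V$. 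Third, when you apply the cyclic case to the inertia group $I$, you should note that $I$ inherits the no-quasireflection hypothesis from $G$ because it is a subgroup (and also that $\tilde v$ is centred at $0\in V$, so that $I$ really sits inside the $G$ acting linearly on $V$ and $\age_V$ makes sense for $h\in I$). None of these is a genuine gap, but as currently phrased they are the pressure points a referee would poke.
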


\begin{remark}
The above definition does not depend on any
choice of  primitive roots of unity. If we 
fix a root of unity $\xi_r$ for every order $r\ge 1$ 
the age grading $\age_V$ can be defined directly on $G$ via 
\eqref{eq:identif_injection-elem}: we get 
$$G\xrightarrow{\ 1:1 \ }
\bigsqcup\nolimits_{r\ge 1} \{\gamma\mid \gamma \colon \pmmu_r \hookrightarrow G\}\xrightarrow {\ \ \ } \QQ.$$
It is important to stress that 
the image of the above map only depends on the second morphism.
More explicitly,
for a fixed element $g\in G$ of order $r$,
we have the following relation between the gradings 
$\age'$ and $\age''$ attached to two 
choices $\zeta'$ and $\zeta''$ 
of primitive $r$th roots of unity in $\pmmu_r$.
If $\zeta'=(\zeta'')^a$
for a suitable 
$a$ prime to $r$, then $\age''(g)=\age'(g^a)$.

Therefore, we  
fix, once and for all, a system of roots of unity in order to 
simplify the combinatorial analysis. In particular, 
this will allow us to specify ghosts simply 
by writing $\ZZ/\ell$-valued
symmetric functions 
(as we already did in Fig.~\ref{fig:generators}). 
Furthermore, this will allow us to 
define the age of a ghost acting on the deformation space. 

\end{remark}

\begin{assumption}[choice of $r$th
primitive roots of unity for all $r$]\label{assu:roots}
We now fix, for any positive integer $r$ 
a primitive $r$th root of unity $\xi_r\in \pmmu_r$. 
This is the same as fixing isomorphisms 
$\ZZ/r \rightarrow \pmmu_r, $
$ k\mapsto (\xi_r)^k$, all $r\in \ZZ_{\ge 1}$. 
In particular, we work with a fixed 
identification \eqref{eq:identif_injection-elem} and, to a given representation $G\to \GL(V)$
we attach $\age_V\colon G\to \QQ$, 
the non-negative grading directly defined on $G$.
Note that, under any 
chosen identification $\ZZ/r\cong \pmmu_r$,
the pairing $\ZZ/r\times \pmmu_r=\mathrm{Hom}(\pmmu_r,\GG_m)\to
\pmmu_r$ matches the product of the ring $\ZZ/r$.

In this way for 
$M\in \ZZ/\ell$ and
$\sta a\in \pmmu_r= \pmmu_{\ell}^M$ 
we can express $\sta a \odot M$ as a product. Via
$\pmmu_r \cong \ZZ/r\subseteq \gcd(M,\ell)\ZZ/\ell$, we write 
$\sta a$ as a multiple of $\gcd(M,\ell)$
modulo $\ell$; then we have 
\begin{equation}\label{eq:odotprod}\sta a\odot M=\frac{aM}{\gcd(M,\ell)}\in \pmmu_r \cong  \gcd(M,\ell)\ZZ/\ell.\end{equation}
When $\ell$ is prime the product $\odot$ is simply the product 
within the ring $\ZZ/\ell$.
\end{assumption}

\begin{definition}[junior and senior group elements]\label{defn:junior}
An element $g\in G$ operating nontrivially on $V$ is 
\emph{senior on $V$} if $\age_V(g)\ge 1$,
and is \emph{junior on $V$} if $0<\age_V(g)<1$ 
(Ito and Reid's terminology, \cite{IR}).
\end{definition}

Now, Theorem \ref{thm:rsbt} may be regarded as saying:
$V/G$ has a noncanonical singularity at the origin
if and only if there exists an element 
$g\in G$ which is junior on $V$.

\subsubsection{The computation of the age of an automorphism on $\defo\level/\qr$.} \label{subsubsect:modqr}
We mod out
$\defo\level$ by the group $\qr$ of automorphisms spanned by quasireflections; \emph{i.e.},
by Proposition \ref{pro:quasireflections} this amounts to
modding out the ETIs
restricting to the identity on the entire curve except for an elliptic tail component $\sta E$
where the canonical involution $\sta i$ fixes $\sta L\!\mid_{\sta E}$.
These involutions operate simply by changing the sign of
the parameter $\tau_e$ smoothing the node where $\sta E$ meets the rest of the curve.
We refer to $\sta E$ as a quasireflection elliptic tail component
or, simply, \emph{quasireflection tail} (QR tail).
We refer to the node joining the quasireflection tail to the rest of the curve as
a \emph{quasireflection node} (QR nodes) and
we identify in this way a partition
$\Sing(\ssC)\cong\Sing_{\mathrm{QR}}(\ssC)\sqcup \Sing_{\mathrm{nonQR}}(\ssC)$
and a partition
$E=E_{\mathrm{QR}}\sqcup E_{\mathrm{nonQR}}$.
Equations \eqref{eq:defolevel} and \eqref{eq:defomodtopolpres} yield
$$\defo\level/\qr \cong
\Bigl(\bigoplus_{e\in E}\mathbb A^1_{\ol \tau_e}\Bigr)\oplus
\Bigl(\bigoplus_{v\in V}H^1(C_v',T(-D_v))\Bigr),  \text{ with }\ol \tau_e =\begin{cases}
\tau_e^2 & \text{for $e\in E_\lqr$}; \\
\tau_e   & \text{for $e\in E\setminus E_\lqr$}.
                                                        \end{cases}
$$
The action of $\aut\level$ on $\level$ descends to an action
without quasireflections on the above space.
\begin{corollary}\label{cor:rsbt_for_us}
The point at the origin of $\defo\level/\aut\level$
is a  noncanonical singularity if and only if there exists
an automorphism  which is junior on
$\defo/\qr$. 
\end{corollary}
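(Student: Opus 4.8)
The plan is to strip off the quasireflections so that the \rsbt{} criterion (Theorem~\ref{thm:rsbt}) becomes applicable. Since the quasitrivial subgroup $\pmmu_\ell\subseteq\aut\level$ acts trivially on $\defo\level$, one has $\defo\level/\aut\level=\defo\level/G$ with $G:=\ul\aut\level$, and by Remark~\ref{rem:faithful} this $G$ acts faithfully on $V:=\defo\level$. Let $W:=\qr\le G$ be the subgroup generated by the elements of $G$ acting as quasireflections on $V$. Being generated by quasireflections, $W$ is a normal subgroup (a conjugate of a quasireflection is a quasireflection), so
$$V/G=(V/W)\big/(G/W).$$

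First I would establish the two facts that make $V/W$ a legitimate input for Theorem~\ref{thm:rsbt}: that $V/W$ is again a smooth affine space carrying a linear $G/W$-action, and that $G/W$ acts on $V/W$ \emph{without quasireflections}. By Proposition~\ref{pro:quasireflections} the elements of $G$ acting as quasireflections are exactly the ETIs of $\ssC$, so $W$ is precisely the subgroup generated by \emph{all} quasireflections of $G$; concretely, as recalled in \S\ref{subsubsect:modqr}, each of its generators is a commuting involution changing the sign of a single coordinate $\tau_e$ ($e\in E_\lqr$) and fixing all the remaining coordinates of $\defo\level$. Hence $V/W$ is the affine space with coordinates $\ol\tau_e=\tau_e^2$ for $e\in E_\lqr$, $\tau_e$ for $e\in E\setminus E_\lqr$, and the $H^1(C_v',T(-D_v))$, and $G/W$ acts on it linearly (every element of $G$ permutes the quasireflection tails, hence the $\tau_e$ with $e\in E_\lqr$). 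That the residual group $G/W$ acquires no new quasireflection is the standard Chevalley--Shephard--Todd phenomenon once the full reflection subgroup has been quotiented out (cf.\ \cite{Prill} and Fact~\ref{fact:singularities}): the passage from $(V,G)$ to $(V/W,G/W)$ leaves a linear action that is free in codimension one. This identification of $W$ with the entire reflection subgroup of $\ul\aut\level$, together with the verification that no quasireflection is created in the quotient, is the only non-formal point of the argument; I expect it to be the main obstacle, everything else being bookkeeping around Theorem~\ref{thm:rsbt}.

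Finally I would apply Theorem~\ref{thm:rsbt} to the action of $G/W$ on the smooth affine space $V/W=\defo\level/\qr$: the origin of $(V/W)/(G/W)=\defo\level/\aut\level$ is a noncanonical point if and only if the image of $\age_{\defo\level/\qr}$ meets the open interval $]0,1[$, i.e.\ if and only if there is a nontrivial $\bar g\in G/W$ that is junior on $\defo\level/\qr$ in the sense of Definition~\ref{defn:junior}. It then remains only to rephrase this: an automorphism $\sta s\in\aut\level$ acts on $\defo\level/\qr$ through its image $\bar{\sta s}\in G/W$, so $\sta s$ is junior on $\defo\level/\qr$ exactly when $\bar{\sta s}$ is, while every element of $W$ acts trivially on $\defo\level/\qr$ and is therefore never junior. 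Hence the existence of a junior $\bar g\in G/W$ is equivalent to the existence of an automorphism of $\level$ which is junior on $\defo\level/\qr$, which is the assertion. (If the origin of $\defo\level/\aut\level$ happens to be a smooth point there is nothing to prove: a smooth point is canonical, and, $G/W$ acting linearly without quasireflections, smoothness of the quotient forces $G/W=1$, so no junior element exists either.)
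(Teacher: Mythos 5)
Your proposal is correct and reproduces the argument the paper implicitly relies on: the paper's \S\ref{subsubsect:modqr} sets up the quotient $\defo\level/\qr$, asserts that the residual action has no quasireflections, and then states the corollary as an immediate consequence of Theorem~\ref{thm:rsbt}. You have simply spelled out that logic — including the standard fact (Prill/Chevalley--Shephard--Todd) that quotienting by the full reflection subgroup leaves a linear action free of quasireflections — so the route is the same as the paper's.
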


\begin{example}\label{exa:trivialaction}
The stack-theoretic ETI
of Rem.~\ref{rem:explicitautomstackytail_1_2} acts trivially on $\defo/\qr$.
\end{example}
\begin{example}
The automorphism $\sta a$ extending $\sta i\circ \sta g$ 
 in Rem.~\ref{rem:explicitautomstackytail_3}
 operates on $\mathbb A^1_{\tau_1}\oplus 
 \mathbb A^1_{\tau_2}\oplus \mathbb A^{3g-1}$ 
 as $(-\mathbb I_2)\oplus \id$.
 Furthermore, by Proposition \ref{pro:autelltail}, $\sta i$ is not an automorphism of $\level$.
We have $\defo/\qr=\mathbb A^1_{\tau_1}\oplus \mathbb A^1_{\tau_2}\oplus \mathbb A^{3g-1}$ 
and $\sta a$ is senior: $\age= 1/2+1/2=1$.
\end{example}

\subsubsection{The computation of the age of a ghost.}\label{subsubsect:ageofghosts}
Using Proposition \ref{pro:ghosts} and Theorem \ref{thm:ghosts_compositel}
we can easily compute the age of a ghost automorphism $\sta a\in 	\ul{\aut}_C\level$ attached to $b\in G_{\pmb \nu}(\Gamma;\pmmu_\ell)$. 
Assumption \ref{assu:roots} allows us to regard $b$ as a 
$\ZZ/\ell$-valued $1$-chain $b\in G_{\pmb \nu}(\Gamma;\ZZ/\ell)$.
We point out that the explicit expressions 
${[M(e)^{-1}]_{\ell} b(e)}$ in 
\eqref{eq:M-1explicitlprime} and 
${[m(e)^{-1}]_{r(e)} b(e)}$ in \eqref{eq:M-1explicit} 
may be interpreted as multiplications in the ring 
$\ZZ/\ell$.

When $\ell$ is prime we have
\begin{equation}\label{eq:ageprime}
\age(\sta a)=\sum_{e\in E} \left \{ \frac{{\sta a}(e)}\ell
\right\} =\sum_{e\in E} \left\{ \frac{[M(e)^{-1}]_\ell b(e)}\ell\right\} \qquad \text{($\ell$ prime)},                                                                                                                                                                   \end{equation}
where $\{\ \}$ denotes the fractional part and 
the terms at the numerators are integer representatives of ${\sta a}(e), b(e)$ and $[M(e)^{-1}]_\ell$ in $\ZZ/\ell$
(each summand in the above expression is clearly independent of the
choice of the representatives modulo $\ell$).
For composite $\ell$, Thm.~\ref{thm:ghosts_compositel}, \eqref{eq:multinr(e)}
yields
\begin{equation}\label{eq:agel}
\age(\sta a )=\sum_{e\in E} \left \{ \frac{{\sta a}(e)}\ell \right\} = \sum_{e\in E} \left\{ \frac{[m(e)^{-1}]_{r(e)} b(e)}{r(e)} \right\}=
\sum_{e\in E} \left\{ \frac{[m(e)^{-1}]_{r(e)} \wt b(e)}{\ell} \right\},                                                                               \end{equation}
where $\wt b(e)\in \ZZ/\ell$ is the image of $b(e)\in \ZZ/r(e)$ via the identification
$\wt b(e)=(\ell/r(e))b(e)=\gcd(\ell,M(e))b(e)$.
Again the above definition does not depend on the choices of the integer representatives
of ${\sta a}(e)\in \ZZ/\ell$ and of $[m(e)^{-1}]_{r(e)}, b(e)\in \ZZ/r(e)$.

In Example \ref{exa:ghostgroup},  we presented
 three ghost automorphisms, the corresponding
symmetric functions $e\mapsto \sta a(e)$ on the set
of oriented edges are given in Fig.~\ref{fig:generators}. Equation \ref{eq:agel} allows
us to compute their age.
According to \eqref{eq:agel},
the order-$2$ automorphism has age $3/2$,
the order-$4$ automorphisms has age $5/4$, whereas the order-$8$ automorphism
has age $1$. Hence, in all these three cases the ghosts are senior.
However, ghost automorphisms operating as junior ghosts actually occur,
we provide some examples,
which will play a role in the proof of Thm.~\ref{thm:no_junior_ghosts}.
\begin{example}\label{exa:aut5}
Let $\ell=5$. Consider a level curve whose dual graph has multiplicity $M$, pictured in the first diagram of Fig.~\ref{fig:aut5}; write
${\pmb \nu}$ for the characteristic function of the support of $M$. Here we have $\pmb\nu=\bf0$.
In the second and third diagram we specify the symmetric function
$\sta a\in S_{\pmb \nu}(\Gamma;\ZZ/5)$ and the corresponding  $1$-cochain $b\in G_{\pmb \nu}(\Gamma;\ZZ/5)$.
Using \eqref{eq:ageprime} we get
$\age(\sta a)= 1/5+1/5+1/5+1/5=4/5.$
 \begin{figure}[h]
\xymatrix@=2.1pc{
&&*{\bullet} \ar@{-}[r] _{\ \ \ \ \underset{\leftarrow} {2}} \ar@{-} @(dr,dl)[rr]_{\underset{\rightarrow}1}
\ar@{-} @(ur,ul)[rr]^{\underset{\rightarrow}{1}}  &   *{\bullet}
\ar@{-}[r]_{\underset{\leftarrow}{2}\ \ \ \ }  &      *{\bullet}                 &
&*{\bullet} \ar@{-}[r] _{{1}} \ar@{-} @(dr,dl)[rr]_{1}
\ar@{-} @(ur,ul)[rr]^{{1}}  &   *{\bullet}
\ar@{-}[r]_{{1}}  &      *{\bullet}                 &
&*{\bullet} \ar@{-}[r] _{\ \ \ \ \underset{\leftarrow} {2}} \ar@{-} @(dr,dl)[rr]_{\underset{\rightarrow}1}
\ar@{-} @(ur,ul)[rr]^{\underset{\rightarrow}{1}}  &   *{\bullet}
\ar@{-}[r]_{\underset{\leftarrow}{2}\ \ \ \ }  &      *{\bullet}                 & \\
}
 \caption{The multiplicity cochain $M$, 
 the symmetric function $e\mapsto {\sta a}(e)$ and the cochain
$e\mapsto b(e)$.} \label{fig:aut5}
\end{figure}
\end{example}

\begin{example}\label{exa:aut5bis}
We consider again a level-$5$ curve, but this time we only need three nodes 
and two components.
The dual graph has the multiplicity $M$
pictured in the first diagram of Fig.~\ref{fig:aut5bis}. Again, 
we have $\pmb\nu=\bf0$ and 
in the second and third diagram we specify the symmetric function
$\sta a\in S_{\pmb \nu}(\Gamma;\ZZ/5)$ and the corresponding  $1$-cochain $b\in G_{\pmb \nu}(\Gamma;\ZZ/5)$.
Using \eqref{eq:ageprime} we get
$\age(\sta a)= 2/5+1/5+1/5=4/5.$
 \begin{figure}[h]
\xymatrix@=2.1pc{
&&
*{\bullet} \ar@{-}[rr] _{ \underset{\leftarrow} {3}} 
\ar@{-} @(dr,dl)[rr]_{\underset{\rightarrow}2}
\ar@{-} @(ur,ul)[rr]^{\underset{\rightarrow}{1}}  & &  
*{\bullet}
       &
&
*{\bullet} \ar@{-}[rr] _{ {1}} 
\ar@{-} @(dr,dl)[rr]_{1}
\ar@{-} @(ur,ul)[rr]^{{2}}  & &  
*{\bullet}
&&
*{\bullet} \ar@{-}[rr] _{ \underset{\leftarrow} {3}} 
\ar@{-} @(dr,dl)[rr]_{\underset{\rightarrow}2}
\ar@{-} @(ur,ul)[rr]^{\underset{\rightarrow}{2}}  & &  
*{\bullet}&&\\
}
 \caption{The multiplicity cochain $M$, 
 the symmetric function $e\mapsto {\sta a}(e)$ and the cochain
$e\mapsto b(e)$.} \label{fig:aut5bis}
\end{figure}
\end{example}

\begin{example}\label{exa:aut8}
Let $\ell=8$. We adopt the notation $M$ and ${\pmb \nu}$ as above. This time $\pmb\nu$ is the vector-valued function
attached to $M$. Again, the second and third diagrams specify the
 symmetric function $\sta a\in S_\nu(\Gamma;\ZZ/8)$ and
the corresponding $1$-cochain $b\in G_{\pmb \nu}(\Gamma;\ZZ/8)$.
 More precisely, we have written next to each edge
the values of $\wt{\sta a}$ and $\wt b$ in $\ZZ/8$; \emph{e.g.}, ``$2$''
appearing in the second diagram
represents the order-$4$ element $2 \mod 8$ in $\ZZ/8$.
Using \eqref{eq:agel} we get
$\age(\sta a)= 1/8+1/8+1/8+1/8+2/8=3/4.$
 \begin{figure}[h]
\xymatrix@=1.9pc{
   &                    &                                                          &   *{\bullet} \ar@{-}[d]_{{5\uparrow}} \ar@{-}[dr]^{\!\!\!\!\!\!\underset{\footnotesize{\searrow}}{1}\ \ \ \ \ }
\ar@{-}@(ur,ur)[dr]^{\!\!\!\!\!\!\underset{\footnotesize{\searrow}}{1}\ \ \ \ \ }                           &    &
                   &   &   *{\bullet} \ar@{-}[d]_{{1}} \ar@{-}[dr]^{1}
\ar@{-}@(ur,ur)[dr]^{1}                           &    &
                   &   &   *{\bullet} \ar@{-}[d]_{{5\uparrow}} \ar@{-}[dr]^{\!\!\!\!\!\!\underset{\footnotesize{\searrow}}{1}\ \ \ \ \ }
\ar@{-}@(ur,ur)[dr]^{\!\!\!\!\!\!\underset{\footnotesize{\searrow}}{1}\ \ \ \ \ }                           &
                   \\
& &    &   *{\bullet}  \ar@{-}@(l,l)[u]^{\ {3\downarrow}}
\ar@{-}[r]_{\underset{\leftarrow}{2}}                 & *{\bullet} &
 &    &   *{\bullet}  \ar@{-}@(l,l)[u]^{{1}}
\ar@{-}[r]_{{2}}                 & *{\bullet}  &
 &    &   *{\bullet}  \ar@{-}@(l,l)[u]^{\ {3\downarrow}}
\ar@{-}[r]_{\underset{\leftarrow}{2}}                 & *{\bullet} \\
}
%
 \caption{The multiplicity cochain $M$, 
 the $\ZZ/8$-valued symmetric function 
 $e\mapsto \wt {\sta a}(e)={\sta a}(e)\gcd(8,M(e))$ and 
 the  cochain
$e\mapsto \wt b(e)=b(e)\gcd(8,M(e))$.} \label{fig:aut8}
\end{figure}
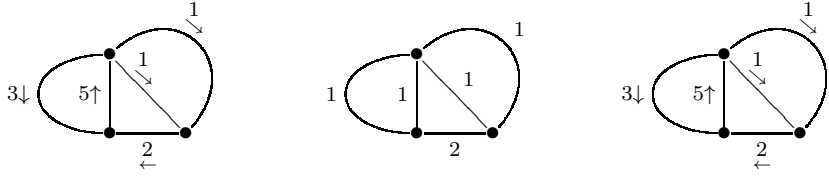
\end{example}

\begin{example}\label{exa:aut12}
Let $\ell=12$. In view of the proof of Theorem \ref{thm:no_junior_ghosts} we slightly generalise Example \ref{exa:aut8}.
We refer to Figure \ref{fig:aut12}, where we adopt the established conventions.
Using \eqref{eq:agel} we get
$\age(\sta a)= 1/12+1/12+1/12+1/12+2/12+2/12=2/3.$ 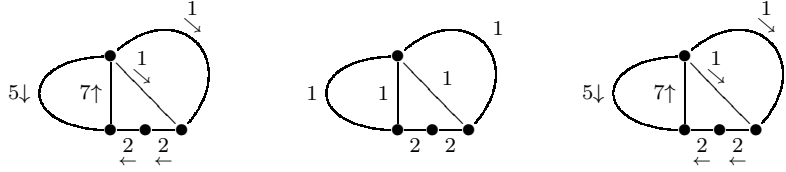
\begin{figure}[h]
\xymatrix@=.6pc{
   &&                    &&                                                          &&
*{\bullet} \ar@{-}[dd]_{{7\uparrow}} \ar@{-}[ddrr]^{\!\!\!\!\!\!\underset{\footnotesize{\searrow}}{1}\ \ \ \ \ }
\ar@{-}@(ur,ur)[ddrr]^{\!\!\!\!\!\!\underset{\footnotesize{\searrow}}{1}\ \ \ \ \ }                           &&    &&
                   &&   &&   *{\bullet} \ar@{-}[dd]_{{1}} \ar@{-}[ddrr]^{1}
\ar@{-}@(ur,ur)[ddrr]^{1}                           &&    &&
                   &&   &&   *{\bullet} \ar@{-}[dd]_{{7\uparrow}} \ar@{-}[ddrr]^{\!\!\!\!\!\!\underset{\footnotesize{\searrow}}{1}\ \ \ \ \ }
\ar@{-}@(ur,ur)[ddrr]^{\!\!\!\!\!\!\underset{\footnotesize{\searrow}}{1}\ \ \ \ \ }                           &&
                   \\
\\
&& &&    &&   *{\bullet}  \ar@{-}@(l,l)[uu]^{\ {5\downarrow}}
\ar@{-}[r]_{\underset{\leftarrow}{2}}                 & *{\bullet} \ar@{-}[r]_{\underset{\leftarrow}{2}}      & *{\bullet} &&
&&&&   *{\bullet}  \ar@{-}@(l,l)[uu]^{{1}}
\ar@{-}[r]_{{2}}                 & *{\bullet} \ar@{-}[r]_{{2}}      & *{\bullet} &&
&&&&   *{\bullet}  \ar@{-}@(l,l)[uu]^{\ {5\downarrow}}
\ar@{-}[r]_{\underset{\leftarrow}{2}}                 & *{\bullet} \ar@{-}[r]_{\underset{\leftarrow}{2}}      & *{\bullet} &&
%
}
%
 \caption{The multiplicity cochain $M$, the $\ZZ/12$-valued symmetric function $e\mapsto \wt {\sta a}(e)={\sta a}(e) \gcd(12,M(e))$ and the  cochain
$e\mapsto \wt b(e)=b(e)\gcd(12,M(e))$.} \label{fig:aut12}
\end{figure}
\end{example}

\subsubsection{The locus of noncanonical singularities of $\ol {\mathcal R}_{g,\ell}$.}
We may apply the above criterion as follows.
Within Deligne and Mumford's moduli stack $\ol {\sta M}_g$ of stable curves,
consider the locus $$\ol {\sta M}_{g}^{\circ}=\{ C\mid \aut(C)=0\}_{/\cong}$$
of stable curves with trivial automorphism group.
This is a stack that can be represented by a smooth scheme.
We study the overlying stack
$$\ol {\sta R}_{g,\ell}^{\circ}=\{\level\mid \aut(C)=0\}_{/\cong}$$
of  level curves $\level$ such that
the coarsening $C$ of $\sta C$ has trivial automorphism group $\aut(C)$.
The scheme $\ol {\mathcal R}_{g,\ell}^{\circ}$ coarsely representing $\ol {\sta R}_{g,\ell}^{\circ}$
may well have singularities;  this happens as soon as
$\ul\aut\level=\ul\aut_C\level$ is nontrivial
(by Lemma \ref{lem:noqrefl_ghosts},
nontrivial ghosts cannot operate as the identity or as quasireflections; hence
singular points in $\ol {\mathcal R}_{g,\ell}^{\circ}$ are characterised by
the presence of nontrivial ghosts).
Furthermore, since the action of $\aut\level$ on $\defo\level$ satisfies the hypotheses of
the \rsbt{} criterion (Theorem \ref{thm:rsbt}),
noncanonical singular points are characterised 
by the
presence of {junior nontrivial ghosts} in the sense of Definition \ref{defn:junior}.
Examples \ref{exa:aut5}-\ref{exa:aut12} already allow a few 
remarks on the codimension of the locus of noncanonical singularities.
By Example \ref{exa:aut5bis}, within
$\ol {\mathcal R}^{\circ}_{g,5h}$,
the locus of noncanonical singularities has codimension $3$. 
Furthermore, the theorem that follows may be regarded as saying: 
for level $2,3,4,$ and $6$, all singularities of
the scheme
$\ol {\mathcal R}_{g,\ell}^{\circ}$ are canonical.
We spell out the statements in the following remark and theorem in 
terms of the entire space $\ol{\mathcal R}_{g,\ell}$. 

\begin{remark}\label{rem:codim}
In Examples \ref{exa:aut5}-\ref{exa:aut12} the edges are all nonseparating and 
are more than $2$. 
These are general features: 
the edges are nonseparating because $r(e)$ vanishes 
on separating edges. 
In other words $J_\ell$ lies over the divisor $\delta_0^{\stable}$ 
of curves having at least 
one nonseparating node. 
Furthermore, a graph $\Gamma$ with 
only two 
separating edges can only 
be a graph whose nonseparating edges are two loops, 
or a graph with a single circuit of length-$2$.
In any case, two circuits never overlap in $\Gamma$.
As a consequence ghosts are always senior if the graph has only 
two nonseparating edges.
We conclude that the codimension of $J_\ell$ is higher than $2$. 
This means that within the locus of non canonical singularities there is only one 
irreducible 
component which has codimension $2$ in $\ol{\mathcal R}_{g,\ell}$: the locus $T_\ell$.
%
\end{remark}

\begin{theorem}[No-Junior-Ghosts Theorem]\label{thm:no_junior_ghosts}
For  $g\ge 4$ and $\ell\ge 1$ consider the stack of level-$\ell$ genus-$g$ curves $\ol {\sta R}_{g,\ell}$.
In $\ol {\sta R}_{g,\ell}$, every nontrivial ghost automorphism
  is senior if and only if $\ell\le 6$ and $\ell\neq 5$.  
\end{theorem}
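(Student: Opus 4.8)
The plan is to reduce the statement to a finite combinatorial optimization over the building blocks identified by Lemma~\ref{tsohg}, and then to exhibit explicit junior ghosts for $\ell=5$ and every $\ell\ge 7$. The age of a ghost $\sta a\leftrightarrow b\in G_{\pmb \nu}(\Gamma;\pmmu_\ell)$ is given by the formula \eqref{eq:agel}: it is a sum $\sum_{e\in E_{\mathrm{nonQR}}}\langle [m(e)^{-1}]_{r(e)} b(e)/r(e)\rangle$, where only the edges $e$ with $M(e)\neq 0$ contribute (on QR nodes and on contracted edges the contribution vanishes) and each summand lies in $[1/r(e),\,1-1/r(e)]\subseteq[1/\ell,\,1-1/\ell]$. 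Since $b\in \im\delta$, for every circuit $K$ we have $b(K)=0$ in $\ZZ/\ell$ by \eqref{eq:imandcirc}; this is the single constraint tying the local contributions together. The first step is therefore to record the precise local model: by Theorem~\ref{thm:ghosts_compositel} a nontrivial ghost is supported on a nontrivial cut of $\Gamma(\pmb\nu)$, hence on a set of edges containing at least one circuit of $\Gamma$, and on that circuit the "twisted multiplicities" $c(e):=[m(e)^{-1}]_{r(e)}b(e)$ must satisfy $\sum_{e} c(e)\cdot(\ell/r(e))\equiv 0\pmod\ell$ while each nonzero $c(e)$ forces a contribution $\ge 1/r(e)\ge 1/\ell$ to the age.

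The second step is the \emph{easy direction}: if every nontrivial ghost were senior one needs to rule out $\ell=5$ and $\ell\ge 7$. Here I would simply point to the explicit constructions already in the paper: Example~\ref{exa:aut5} gives a ghost of age $4/5<1$ for $\ell=5$, Example~\ref{exa:aut8} a ghost of age $3/4<1$ for $\ell=8$, and Example~\ref{exa:aut12} a ghost of age $2/3<1$ for $\ell=12$. These three configurations (a "theta-like" blowup of an edge into two or three parallel edges, with multiplicities chosen so that the ghost is constant $\pm1$ on the relevant circuit after twisting) generalize: for any prime $p\ge 5$ one builds on a two-vertex graph with $p-1$ parallel edges all of multiplicity $1$ a constant ghost $\sta a\equiv 1$, whose age is $(p-1)/p<1$; for $\ell$ divisible by such a $p$, or more generally for any composite $\ell\ge 7$, one embeds the corresponding $p$-primary or prime-power configuration as a subgraph (extending trivially, i.e. $M=0$, elsewhere, so the extra edges are contracted in $\Gamma(\nu_p^{e_p})$ and contribute nothing) and checks that the age stays $<1$. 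The bookkeeping is a short case-split on the prime factorization $\ell=\prod p^{e_p}$, handling $\ell=p$ prime $\ge 5$, $\ell=p^e$ a prime power $\ge 7$, and $\ell$ with at least two prime factors and $\ell\ge 7$ (e.g. $\ell=6$ is exactly the borderline excluded by "$5\neq\ell\le 6$", and one must verify $\ell=10,12,14,\dots$ all admit junior ghosts, which the $p=5$ and the $\ell=8,12$ templates cover after tensoring/restricting).

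The third step, and the main obstacle, is the \emph{hard direction}: for $\ell\in\{1,2,3,4,6\}$ every nontrivial ghost is senior. For $\ell$ prime ($\ell=2,3$) the ghost group is a free $\pmmu_\ell$-module on cuts, each summand $\langle c(e)/\ell\rangle$ is either $0$ or in $\{1/\ell,\dots,(\ell-1)/\ell\}$, and the circuit constraint $\sum c(e)\equiv 0\pmod\ell$ forces, on any circuit through the support, at least $\lceil \ell/(\ell-1)\rceil$... — more carefully, a nonzero cut $b$ must be nonzero on at least two distinct non-loop edges of some circuit (Lemma~\ref{lem:noqrefl_ghosts} already proves this), and when $\ell\le 3$ a sum of two nonzero residues in $\ZZ/\ell$ that vanishes is $\langle 1/\ell\rangle+\langle (\ell-1)/\ell\rangle=1$ on the nose, forcing $\age\ge 1$; one then argues that any \emph{additional} nonzero edge can only increase the age, so seniority is automatic. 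For $\ell=4$ and $\ell=6$ the ghost group is no longer free and the argument is genuinely finite-combinatorial: by Lemma~\ref{tsohg} the group decomposes as $\bigoplus_p\bigoplus_k(\pmmu_{p^k})^{\oplus\alpha_p^k}$, and the hard part will be to show that for these small $\ell$ no choice of twisted multiplicities $c(e)$ along a circuit — compatible with the faithfulness constraint $\gcd(m(e),r(e))=1$ and the closedness constraint — can make $\sum\langle c(e)/r(e)\rangle$ land strictly inside $(0,1)$. I expect to organize this as: (i) reduce to a single circuit $K$ carrying the support of a would-be junior ghost; (ii) on $K$, the pairs $(r(e),m(e))$ and the values $b(e)$ range over a small explicit finite set; (iii) enumerate, using that the minimal positive contribution of an edge with index $r(e)$ is $1/r(e)$ and that $r(e)\mid\ell$, the finitely many ways the constraint $\sum b(e)(\ell/r(e))\equiv 0$ can be met with all contributions summing to $<1$, and verify the list is empty for $\ell\in\{4,6\}$ while nonempty for $\ell=5$ and $\ell\ge 7$. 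The delicate point is that for $\ell=6$ one has edges of index $2$ and $3$ simultaneously available (as in Remark~\ref{rem:conditions_comments}), so contributions $1/2$ and $1/3$ can coexist; one must check that $1/2+1/3=5/6<1$ cannot actually be realized by a \emph{closed} ghost on any circuit (the $\ZZ/6$-closedness $3+2\not\equiv 0$, and correcting it costs at least another $1/6$, pushing the total to $\ge 1$), whereas at $\ell=12$ the analogous obstruction disappears — which is exactly why $12$ is excluded. I would present this final enumeration as a compact table of the circuit length $n$, the multiset $\{r(e)\}$, and the resulting minimal age, concluding that $\ge 1$ precisely when $\ell\in\{1,2,3,4,6\}$.
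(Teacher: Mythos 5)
Your proposal has the right skeleton — point to the paper's own examples for $\ell = 5, 8, 12$, subdivide to generate junior ghosts for larger $\ell$, and then do a combinatorial case analysis for $\ell \le 6$, $\ell \ne 5$ — and this is indeed the strategy of the paper's proof. However there are two concrete gaps, one in each direction.

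In the \textbf{``only if'' direction}, your general template for primes $p \ge 5$ does not produce a valid level curve. On a two-vertex graph with $p-1$ parallel edges, each with multiplicity $1$ (oriented consistently), the $0$-cochain $\partial M$ takes the value $\pm(p-1) \not\equiv 0 \pmod p$ at each vertex, so $M \notin \ker\partial$ and hence $M$ is not a multiplicity cochain of any stable level curve (Proposition~\ref{pro:Misclosed}). The correct junior configurations need at least three vertices, and the multiplicities cannot all be $1$; the $\ell = 5$ example in Figure~\ref{fig:aut5} uses a $3$-vertex, $4$-edge graph with multiplicities $\{1,1,2,2\}$, which the paper then extends to all odd $\ell \ge 5$ by subdividing an edge (and to all multiples by scaling $M$ and $\sta a$). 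Your template would need this repair.

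In the \textbf{``if'' direction}, you conflate the $1$-cochain $b = \sta a M$ (which must vanish on circuits) with the twisted function $\sta a = M^{-1}b$ (whose values compute the age). Your statement that ``a sum of two nonzero residues in $\ZZ/\ell$ that vanishes is $\langle 1/\ell\rangle + \langle(\ell-1)/\ell\rangle = 1$'' applies to $b$ along a circuit, but the age contribution is $\sum_e \langle \sta a(e)/\ell\rangle$, not $\sum_e \langle b(e)/\ell\rangle$. For $\ell = 2$ the two coincide ($M \equiv 1$ on active edges), but already for $\ell = 3$ an oriented edge with $(M(e),\sta a(e)) = (2,1)$ contributes $1/3$ to the age while contributing $2/3$ to the circuit constraint — this is exactly what the paper calls an \emph{age-delay} edge (condition~\eqref{eq:agedelay} in the proof). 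The entire hard direction is organized around this phenomenon, which your proposal does not identify. To rule out junior ghosts for $\ell = 3, 4, 6$ one cannot just show ``$\ge 1$ on the nose''; one must show that the configurations with age-delay edges that would give age $< 1$ are incompatible with the constraint $M \in \ker\partial$ (condition (ii) in the proof), which is where the real work is. Your plan for $\ell = 4, 6$ — ``reduce to a single circuit $K$ carrying the support'' and then enumerate — is not carried out and is also not quite the right reduction: a ghost is supported on a cut of $\Gamma(\pmb\nu)$, which in general is not a single circuit, and the paper instead analyses each \emph{active circuit} and uses the auxiliary graph $\Omega$ of active edges (for $\ell = 4$ and $6$) to turn the closedness of $M$ into a counting obstruction. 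Your $\ell = 6$ heuristic ($1/2 + 1/3 = 5/6$, correcting costs $1/6$) is a reasonable intuition but is not a proof; the paper's five-case list for $\ell = 6$ is what actually discharges it.
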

\begin{proof}
Proving
the ``only if''-part of the statement for a given level $\ell$ and genus $g$
amounts to exhibiting a dual graph $\Gamma$ attached to an object of
$\ol {\sta R}_{g,\ell}$ with a multiplicity $M\in \ker\partial$ and
a symmetric function $\sta a\colon E\to \ZZ/\ell$ defining a junior ghost.
Notice that if there exists such a triple $(\Gamma , M, \sta a)$
for $\ol {\sta R}_{g,\ell}$, then we can exhibit a triple
$(\Gamma, (\ell'/\ell)\times M, (\ell'/\ell)\times \sta a)$ for $\ol {\sta R}_{g,\ell'}$
for any multiple $\ell'$ of $\ell$
(here, Proposition \ref{pro:Misclosed} has been implicitly used).
Examples \ref{exa:aut5}, \ref{exa:aut8}, \ref{exa:aut12} actually occur for $g\ge 4$ and
exhibit junior ghosts
for positive levels $\ell\in5 \ZZ\cup 8\ZZ \cup 12\ZZ$.
By halving a single straight edge in Figure \ref{fig:aut5}
($\xymatrix@=0.7pc{ *{\bullet} \ar@{-}[r] & *{\bullet}\ar@{-}[l] } \to \xymatrix@=0.7pc{ *{\bullet} \ar@{-}[r] &*{\bullet} \ar@{-}[r]\ar@{-}[l]  &*{\bullet}\ar@{-}[l]  }$),
we can immediately generalise
Example \ref{exa:aut5} from $\ell=5$ to $\ell=7$; iterating this procedure,
for all odd levels $\ell \ge 5$ and for their multiples, we exhibit   junior ghosts.
The ``only if'' part is proven: in order for junior ghost not
to occur, $\ell$ should be a positive integer
of the form $2^a 3^b$ with $a\in \mathbb N$ and $b=0,1$ (\emph{i.e.} not a multiple of an odd integer $\ge 5$),
with $a<3$ (\emph{i.e} not a multiple of $8$)
and with $a<2$ if $b=1$ (\emph{i.e} not a multiple of $12$).

The ``if''-part of the statement claims that there is no junior
ghost $\sta a$ in $G_{\pmb \nu}(M;\ZZ/\ell)$ for any stable graph $\Gamma$ with $\pmb \nu=\pmb \nu_M$ attached
$M\in \ker \partial$. 
Throughout the entire proof, we will use the following 
necessary conditions numbered (i), (ii) and (iii) for the existence of 
a junior ghost $\sta a $: 
\begin{enumerate}
\item $\age(\sta a)<1$ (\emph{i.e.} $\sta a$ is junior); \label{item:ajunior}
\item $M=\sum_{i\in I} K_i$, where $I$ is a finite set of circuits 
(\emph{i.e.} $M\in \ker\partial$);\label{item:MinKer}
\item $\sta{a}\odot M(K)\equiv 0$ for any circuit $K$ (\emph{i.e.} $\sta{a}\odot M\in \im \delta$).\label{item:aMinIm}
                          \end{enumerate}
In order to conclude that such conditions are incompatible 
we provide tables showing
all possible values of $M$ and $\sta a\in \ZZ/\ell$
and the corresponding value of $\sta a \odot M$ for  $\ell=2,3,4,6$.
In the first line we list all values $M=0,1,\dots,\ell-1$.
In the first column, we list the possible 
values $i=0,1,\dots, \ell-1$ 
that $\sta a$ may take at an edge $e$
of multiplicity $M$. We have $\sta a(e)=i$ 
only if $i$ satisfies the compatibility condition 
$\gcd(M,\ell)\mid i$.
Then, we fill the $i$th slot
of the $j$th column  
in the table with the corresponding value of 
$\sta a\odot M$ 
if and only if $\sta a=i$ is compatible with $M=j$.
\begin{figure}[h]
\begin{eqnarray*}
\begin{tabular}{c||c|c|}
   $\ell=2$ 
   & $0$ &$ 1$    \\
  \hline
$ 0$ &
0 & 0 \\
1&& 1
\end{tabular}
\quad \begin{tabular}{c||c|c|c|}
   $\ell=3$& $0$ &$ 1$ & 2    \\
  \hline
   $0$ &0
 & 0 & 0\\
1&& 1 & \framebox[1.3\width]{2}\\
2&& 2 & 1
\end{tabular}
\quad 
\begin{tabular}{c||c|c|c|c|}
   $\ell=4$& $0$ &$ 1$ & 2& 3   \\
  \hline
0 &
0 & 0& 0& 0 \\
1&& 1& &\framebox[1.3\width]{3}\\
2&& 2& 2 & 2\\
3&& 3& & 1
\end{tabular} \quad
\begin{tabular}{c||c|c|c|c|c|c|}
   $\ell=6$& $0$ &$ 1$ &2&3&4&5   \\
  \hline
 0&
0 & 0 & 0 & 0 & 0& 0\\
1&& 1 &       &   & & 
\framebox[1.3\width]{5}\\
2&& 2 & 2 & & \framebox[1.3\width]{4} & \framebox[1.3\width]{4}\\
3&& 3 &   &   3 & & 3\\ 
4&& 4 & 4 & & 2 & 2\\
5&& 5 &    &   && 1
\end{tabular}
\end{eqnarray*}
\caption{Multiplication tables for $\odot$ and $\ell=2,3,4$ and $6$.}
\end{figure}
We draw a box around the configurations 
where $\sta a=i$ is strictly less than 
 $\sta a\let \sta a\odot M\in\{0,\dots, \ell-1\}$. 
Indeed, the presence of 
oriented edges $e$ with the corresponding values 
$\sta a(e), M(e)$ is a necessary condition for $\sta a$ to be junior. 
If no such oriented edges occur, as it happens for $\ell=2$, then, 
for a nontrivial element $\sta a$,
condition (iii) is incompatible with condition (i). Indeed, 
we have 
$$\age(\sta a)= \sum_E \frac{\sta a (e)}\ell \ge 
\sum_K \frac {(\sta a \odot M)(e)}{\ell}\ge 1 \ \ (\text{by (iii)}),$$
where $K$ is a circuit passing through an edge where $\sta a(e)$ 
is nontrivial. This settles the case $\ell=2$ 
and motivates the following definitions. 
\begin{definition}\label{defn:agedelay}
An edge $e$ 
is \emph{active} with respect to an automorphism $\sta a$ if $\sta a(e)$
is nontrivial.
For an oriented edge $e$, 
we refer to the values $(M(e), \sta a(e))$
as the \emph{type} of $e$.
An oriented edge $e$ 
is an \emph{age-delay} edge if 
after reduction modulo $\ell$ within $\{0,\dots, \ell-1\}$ we have
\begin{equation}\label{eq:agedelay}
a(e) < {\sta{a}\odot M(e)}.\end{equation} 
An age-delay edge is automatically 
active, otherwise both sides of the inequality 
vanish.
We say that a circuit $K$ is active if it passes through an active 
edge and we say that it is age-delay if it passes through an age-delay
edge. An age-delay circuit is automatically active.
\end{definition}
With this terminology, the previous argument may be rephrased.
\begin{lemma}
Let $\sta a$ be a junior automorphism. An active circuit is 
necessarily age-delay.\qed \end{lemma}
We can also prove that the type of the 
active edges of an active circuit 
cannot be constant.
\begin{lemma}\label{lem:typenotconstant}
Let $\sta a$ be a junior automorphism. 
Consider an active circuit $K=\sum_{i=0}^{n-1} e_i$ 
where the head of $e_i$ is the tail of $e_{i+1}$ for all $i\in \ZZ/n$. 
Then the active edges $e_i$ of $K$ cannot be all of 
the same type.
\end{lemma}
\begin{proof}
By way of contradiction, assume 
there is an active circuit $K$ whose active edges are all of type 
$(M(e),\sta a(e))=(J,I)$ for some values $J,I\in \{1,\dots, \ell-1\}$ 
with $\gcd(J,\ell)\mid I$. 
Then, condition (iii) may be expressed via \eqref{eq:odotprod} as
$\sum_K IJ/\gcd(J,\ell)^2\in \ell/\gcd(J,\ell)\ZZ$. In particular, 
$\ell/\gcd(J,\ell)$ divides $k^\# IJ/\gcd(J,\ell)^2$ where 
$k^\#$ is the number of active edges in $K$. We conclude 
that $\ell/\gcd(J,\ell)$, which is prime to 
$J/\gcd(J,\ell)$, divides $k^\# I/\gcd(J,\ell)$
and $\ell$ divides $k^\# I>0$.
This contradicts $\age(\sta a)<1$, because 
$\age(\sta a)\ge k^\#I/\ell>1$.
\end{proof} 

In the case $\ell=3$ (resp. $\ell=4$) 
the only 
age-delay edges are of type $(\ell-1,1)$.
A nontrivial junior automorphism should contain 
an age-delay circuit $K=\sum_{i=0}^{n-1}e_i$ 
with $(M(e_0),\sta a(e_0))=
(\ell-1,1)$. For $i\neq 0$ 
the total value of $\sta a$ should be strictly 
less then $2$ (resp. $3$) by condition (i).
Furthermore the total value of $\sta a\odot M$
reduced within $\{0,\dots,\ell-1\}$ modulo $\ell$   
should be $1$ (resp. $1$) by condition (iii). 
Then, only one of the edges $e_i$ for $i\neq 0$ 
is active, and its type is $(1,1)$ (resp. 
is (1,1)). 
For a junior automorphism, 
any active circuit contains exactly two active edges of 
type $(1,1)$ and $(\ell-1,1)$. Then, the automorphism cannot be junior;
the claim follows from this slightly more general statement
(which we make in view of $\ell=6$).
\begin{lemma}\label{lem:inout}
Let $\sta a$ be an automorphism for which all 
active circuits 
have only an even number $2k$ 
of active edges equally divided into $k$ edges of type
$(1,1)$ and $k$ edges of type $(\ell-1,1)$. 
Then $\sta a$ is either trivial or senior.
\end{lemma}
\begin{proof}
We consider the set of active edges, which by the 
hypothesis, can only be of multiplicity $M=1$ or $\ell-1$ 
depending on their orientation. We pick an orientation 
for all edges in the edge set $E$ 
is such a way that $M=1$ on all active edges.
Notice that a circuit, which is by definition a sequence of  
oriented edges $e_0,\dots, e_{n-1}\in \mathbb E$,
can now be regarded, with respect to the chosen orientation, as a characteristic function $\chi_K\colon E\to \ZZ/\ell$, 
which equals $1$
(resp. $-1\in\ZZ/\ell$) 
on $e$ if $e=e_i$ (resp $e=\ol e_i$ for some $i$)
and vanishes elsewhere. 
Condition (ii) may be regarded as saying that 
the multiplicity is a $\ZZ/\ell$-valued 
sum of these characteristic functions of 
circuits. If we add up the values of the 
multiplicities $M$ of the active circuits we obtain $0\in \ZZ/\ell$
because each function $\chi_K$ 
restricted on the active circuits has total value
$k-k=0$ by the hypothesis of the lemma. 
Since $M=1$ on all active edges, the number of active edges is a 
multiple of $\ell$. Then $\sta a$ is senior or trivial, since it
equals $1$ on all active edges.
\end{proof}
For $\ell=6$ the value of 
$\sta a$ on an age-delay edge is $1$ or $2$. 
By excluding the second case the claim will be deduced below
as for $\ell=3$ and $4$.
\begin{lemma}
Let $\ell=6$ and $\sta a$ be junior. Then $\sta a(e)\neq 2$
on all edges.
\end{lemma}
\begin{proof}
By way of contradiction let $e$ be an oriented 
edge with $\sta a(e)=2$ and consider a circuit 
$K=\sum_{i=0}^{n-1}e_i$ through it with $e=e_0$. 
The value of $M$ can be $1,2,4,5$ because $\gcd(M(e_0),\ell)$ should 
divide $2$. By conveniently choosing the orientation of $e_0$ 
and of the circuit $K=\sum_{i=0}^{n-1}e_i$ we assume
$M(e_0)=4$ or $5$, which implies 
$(\sta a\odot M)(e_{0})=4$ ($e_0$ is age-delay). 

The age contribution of $e_{0}$ is 
$a(e_0)/\ell=1/3$.
Furthermore, the function $\sta a\odot M$ should 
add up to $2\in \ZZ/6$ on the remaining
active edges of $K$ (condition (iii)). 
Condition (i), $\age(\sta a)<1$, imposes edges 
with $\sta a<4$.
We have two possibilities for 
the set of 
active edges of $K$: \begin{enumerate}
\item[(a)] it is formed by $e_0$ and two active edges
$e'',e'''$ of type $(1,1)$ where $\sta a\odot M$ equals $1$; 
\item[(b)] it is of the form $\{e_0, e''\}$ with $e''$ 
of type $(1,2)$ or $(2,2)$, and  
$\sta a\odot M(e'')=2$. \end{enumerate}
In any of these cases $K$
contributes $2/3$ to $\age(\sta a)$ and there is exactly another
 active edge $e'$ of $\Gamma$ outside
$K$ (by Lemma 
\ref{lem:typenotconstant} and $\age(\sta a)<1$):
we have $\sta a(e')=1$ and $M(e')$ odd.
We argue by parity, that is we compose   
the functions $M$ and $\sta a\odot M$  
with $P\colon \ZZ/6\to \ZZ/2$. 
Note that $P(M)$ does not depend on 
the choice of the orientation. 
The value of $P(\sta a \odot M)$ on the only edge of $H$ that 
lie off $K$ is odd; therefore, 
$P(\sta a \odot M)$ should add up to $1\in \ZZ/2$ also on the set 
of edges shared by $K$ and $H$. 
Then we exclude case (b), where $P(\sta a \odot M)$ 
is zero identically.

The 
set of active edges of $\Gamma$ is 
formed by $e_0,e',e'', e'''$.
The function $P(\sta a \odot M)$ is even on $e_{0}$ and odd on 
$e', e''$ and $e'''$; hence, any active circuit should go through 
$\{e',e'', e''\}$ an even number of times; by (ii), 
this implies $P(M(e'))+P(M(e''))+P(M(e'''))
=0$.
This is impossible, because 
$P(M)$ equals $1$ identically on $\{e', e'',e'''\}$.
\end{proof}

For $\ell=6$, any age-delay edge is of type $(5,1)$; therefore, in order to 
be compatible with (i) and (iii) and the above lemma, an 
age-delay circuit has either exactly two
 active edges of type (1,1) and (5,1)
or four active edges equally divided into two edges of type (1,1) and 
two edges of type (5,1). Lemma \ref{lem:inout} implies the claim.
\end{proof}

\begin{definition}\label{defn:Jcurve}
A level-$\ell$ curve $\level$ is a J-curve if $\aut\level$ contains a junior ghost.
\end{definition}
The points representing J-curves are noncanonical singularities by definition.
Noncanonical singularities may occur even if the
level curves has no junior ghost automorphisms and regardless of the level $\ell$.
Indeed this is the case of level curves of type T (or simply  {T-curves}) which we now illustrate.
T-curves represent a codimension-$1$ locus within
the divisor $\Delta_{g-1}$; \emph{i.e.} a codimension-$2$ locus in $\overline {\sta R}_{g,\ell}$.

\begin{definition}\label{defn:Tcurve}
A  level-$\ell$ curve $\level$ is a T-curve if \begin{itemize}
\item $\sta C$ contains an elliptic tail
(that is $\subset E\subset \ssC$ with
${\ssC\cap \ol {\ssC\setminus E}}=\{n\}$) (Tail-condition);
\item $E$ admits an order-$3$ automorphism (that is $\aut(E,n)\cong \pmmu_6$) (Three-condition);
\item $\ssL$ is trivial on the elliptic tail; \emph{i.e.}
$\level \in \Delta_{g-1}$ (Triviality-condition). \end{itemize}
\end{definition}

\begin{theorem}\label{thm:noncanonical}
The point representing $\level$ in $\ol {\sta R}_{g,\ell}$ is
a noncanonical singularity if and only if  $\level$ is a T-curve or a J-curve.
\end{theorem}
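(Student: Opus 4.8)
The plan is to work locally and apply the \rsbt{} criterion in the packaged form of Corollary~\ref{cor:rsbt_for_us}: the point of $\rr_{g,\ell}$ representing $\level$ is a noncanonical singularity if and only if $\aut\level$ contains an automorphism that is junior on the regular space $\defo\level/\qr$; so it suffices to show that $\defo\level/\qr$ carries a junior automorphism precisely when $\level$ is a T-curve or a J-curve. For the ``if'' direction, a J-curve carries a junior ghost $\sta a\in\ul\aut_C\level$ by Definition~\ref{defn:Jcurve}, and by Remark~\ref{rem:faithful}, Lemma~\ref{lem:noqrefl_ghosts} and Remark~\ref{rem:fixingirredcompl} this $\sta a$ acts nontrivially, is not a quasireflection, and moves only the node parameters $\tau_e$ with $r(e)>1$, which are unaffected by the passage to $\defo\level/\qr$; hence $\sta a$ is junior there. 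For a T-curve, the attaching node $n$ of the elliptic tail $E$ is separating of trivial stabiliser and $\ssL\mid_E\cong\cO$, so by faithfulness of $\ssL$ the curve $E$ must be a smooth genus-one curve with $j(E)=0$ having a scheme-theoretic neighbourhood in $\ssC$; then the order-$6$ generator $\alpha$ of $\aut(E,n)\cong\pmmu_6$ extends by the identity to an automorphism of $\level$, and exactly as in Harris and Mumford's analysis of $T_1$ it acts on $\defo(E,n)\cong H^1(E,T(-n))$ by a primitive cube root of unity and on the parameter $\tau_n$ by a primitive sixth root, hence on $\ol\tau_n=\tau_n^2$ by a primitive cube root, giving age $\tfrac13+\tfrac13<1$ on $\defo\level/\qr$.

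For the ``only if'' direction, assume the point is noncanonical, fix $g\in\aut\level$ junior on $\defo\level/\qr$, and assume $\level$ is not a J-curve; we must show it is a T-curve. Put $s=\mathfrak{coarse}(g)\in\aut'(C)$. If $s=\id_C$ then $g$ is a ghost junior on $\defo\level/\qr$, hence a junior ghost, and $\level$ would be a J-curve; so $s\neq\id_C$, and $s$ acts nontrivially on $\defo(C)$ since $g\ge4$ (Remark~\ref{rem:faithful}). By \eqref{eq:defolevel}--\eqref{eq:defomodtopolpres},
$$\defo\level/\qr=\Bigl(\bigoplus_{v\in V}H^1(C_v',T(-D_v))\Bigr)\oplus\Bigl(\bigoplus_{e\in E}\mathbb A^1_{\ol\tau_e}\Bigr),$$
where $g$ acts on the first summand through $s$ — canonically identifying it with $\defo(C,\Sing C)$, on which $s$ acts as on $\defo(C)$ — and permutes the coordinates $\ol\tau_e$ compatibly with $s_\Gamma$. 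Comparing through the degree-$r(e)$ maps $\ol\tau_e\mapsto\ol\tau_e^{r(e)}=\ol t_e$ with the analogous decomposition of $\defo(C)/\langle\text{ETI}\rangle$, one checks $s_\Gamma$-orbit by orbit that $\age(g)$ and $\age(s\text{ on }\defo(C)/\langle\text{ETI}\rangle)$ differ only by ``age-delay'' terms at stacky edges, and that juniority of $g$ forces every active orbit to be a single edge carrying a small rotation and forces $\age(s\text{ on }\defo(C,\Sing C))<1$. The next step is to show that if $s$ were senior on $\defo(C)/\langle\text{ETI}\rangle$ the age-delay would have to be concentrated at a stacky node lying on a genus-one tail, and that a direct inspection of that case via Proposition~\ref{pro:autelltail} and Remarks~\ref{rem:explicitautomstackytail_1_2}, \ref{rem:explicitautomstackytail_3} produces no junior automorphism, a contradiction. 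Hence $s$ is junior on $\defo(C)/\langle\text{ETI}\rangle$; by Harris and Mumford's theorem $C\in T_1$, so $C$ has an elliptic tail $(E,n)$ with $\aut(E,n)\cong\pmmu_6$ on which $s$ restricts, modulo the elliptic tail involution, to the order-$6$ automorphism $\alpha$. As $g$ restricts over the scheme $E$ to $\alpha$ and $g^*\ssL\cong\ssL$, we get $\alpha^*(\ssL\mid_E)\cong\ssL\mid_E$; since $\alpha$ acts on $\Pic^0(E)\cong E$ with $\cO$ as its only fixed point ($1-\zeta_6$ being a unit in $\ZZ[\zeta_6]$), this forces $\ssL\mid_E\cong\cO$, i.e.\ $\level\in\Delta_{g-1}$, so $\level$ is a T-curve.

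The main obstacle is the age comparison under the ramified passage $\defo\level\to\defo(C)$: one must rule out that an automorphism $s$ of $C$ that is senior on $\defo(C)/\langle\text{ETI}\rangle$ — as Harris--Mumford forces when $C\notin T_1$ — picks up a junior lift to $\level$ simply because extracting $r(e)$-th roots of the smoothing parameters replaces a node rotation by one of smaller argument. Handling this demands careful accounting of these age-delay contributions at stacky nodes, using that the $H^1$-summand part of $\age(g)$ is the same for every lift of $s$, together with the explicit description of automorphisms of stack-theoretic elliptic tails (Proposition~\ref{pro:autelltail}) to confine any such discrepancy to a genus-one tail, where it never yields a new junior automorphism.
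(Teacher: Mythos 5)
Your ``if''-direction is sound and matches the paper's: for a J-curve the junior ghost acts trivially on every quasireflection parameter $\ol\tau_e$ (since QR nodes are separating, hence of trivial stabiliser), so its age on $\defo\level/\qr$ is exactly the age on $\defo\level$ and is $<1$; for a T-curve your $\alpha$ is the paper's $\sta a_{1/6}$ and the computation $\tfrac13+\tfrac13<1$ on $\defo\level/\qr$ agrees. The observation that $1-\zeta_6$ is a unit in $\ZZ[\zeta_6]$ gives a neat one-line reason why the restriction of $\ssL$ to the elliptic tail must be trivial once the tail carries an order-$6$ automorphism preserving $\ssL$.

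The ``only if'' direction is where the real work lies and where your proposal has a genuine gap. You correctly isolate the danger: an $r(e)$th-root parameter $\tau_e$ can be rotated by a smaller amount than $t_e=\tau_e^{r(e)}$, so a senior $s$ on $\defo(C)/\langle\text{ETI}\rangle$ could in principle lift to a junior $g$ on $\defo\level/\qr$. But the two crucial claims you then make are asserted, not proved. First, ``juniority of $g$ forces every active orbit to be a single edge'' is false as stated: orbits of length $2$ contribute only $1/2$ to the age and cannot be ruled out at this stage (the paper's Step 1 and Step 5 are devoted precisely to controlling the one allowed transposition). Second — and this is the heart of the matter — the claim that ``if $s$ were senior the age-delay would have to be concentrated at a stacky node lying on a genus-one tail'' is not justified and does not follow from anything you have established; in fact the paper never argues this way. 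Instead, the paper's proof of Proposition~\ref{pro:onlyifinanutshell} reruns Harris--Mumford's classification of irreducible components of $\ssC$ on which the automorphism is nontrivial, directly in the twisted setting, using: the $(\star)$-smoothability / $(\star)$-rigidity reduction (Assumption~\ref{assum:nonsmoothability}), the age lower bounds of Lemma~\ref{lem:age1/m} (including the refinement when $\sta a^{\mathfrak{ord}(\sta a)}$ is a senior ghost — this is exactly where the hypothesis ``not a J-curve'' enters in a quantitative way), Ludwig's Lemma~\ref{lem:LudwigXlemma} for nodes whose branches are interchanged, and Proposition~\ref{pro:autelltail} together with Remarks~\ref{rem:explicitautomstackytail_1_2}--\ref{rem:explicitautomstackytail_3} to dispose of the stack-theoretic elliptic tails arising in case (I). None of this case analysis can be compressed into the high-level comparison $\age(g)\leftrightarrow\age(s)$ you propose, and without carrying it out the ``only if'' direction remains unproved. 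Your plan reads as an accurate description of \emph{what} must be ruled out, but it does not contain an argument that rules it out.
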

\begin{proof}
For the ``if''-part of the statement
we only need to show that a T-curve $\level$ has a junior automorphism.
Let us define the automorphism $\sta a_{1/6}\in \aut\level$,
whose restriction to $\ol{\ssC\setminus E}$ is the identity and
whose restriction to $E$ generates $\aut(E,n)\cong \pmmu_6$
and operates on the local parameter of $E$ at $n$ as $z\mapsto \xi_6z$.
The coordinates $\tau_1$ and $\tau_2$ correspond to the
direction smoothing the node $n$ and to the direction
preserving the node and varying along $\Delta_{g-1}$.
The action of
$\sta a_{1/6}$ on $\defo\level$ is given by
$\diag(\xi_6,\xi_6^2,1,\dots,1)$, where the first coordinates are $\tau_1$ and $\tau_2$.
The action  of
$\sta a_{1/6}$ on $\defo\level/\qr$ is given by
$\diag(\xi^2_6,\xi_6^2,1,\dots,1)$, where
the first coordinates are $\ol\tau_1=\tau_1^2$ and $\ol\tau_2=\tau_2$ (see \S\ref{subsubsect:modqr}).
The age of $\sta a_{1/6}$ on $\defo/\qr$ is $1/3+1/3=2/3<1$,
and $\defo/\aut$
has a noncanonical singularity.
Notice that all the junior automorphisms 
operating as $z\mapsto \lambda z$ on the tail (and fixing the rest of the curve) 
are of the form $\sta a_{1/6}$ up to ETI.

The ``only if''-part reduces to the following proposition.
\begin{proposition}\label{pro:onlyifinanutshell}
Let $\level$ be a  level-$\ell$ curve which is not a J-curve
and has a junior automorphism $\sta a$,
then it is a T-curve and the isomorphism $\sta a$ coincides, up to ETIs, with the above
isomorphism $\sta {a}_{1/6}$.
\end{proposition}

Preliminary 1. As in \cite[p.33]{HM} we begin by slightly simplifying the problem by adding
 a further condition to
the hypotheses.
A level curve $\level$ representing a noncanonical singularity in $\rr_{g,\ell}$
is \emph{$(\star)$-smoothable} if the following conditions are satisfied.
\begin{enumerate}
\item[{(a)}] There is a junior automorphism $\sta a\in \aut\level$
and $m$ nodes
$\sta n_0, \dots, \sta n_{m-1}$ lying in $\Sing(\ssC)\setminus \{\text{QR nodes}\}$
labeled by $j\in \ZZ/m$ so that $\sta a(\sta n_j)=\sta n_{j+1}$.
\item[{(b)}] We have $\textstyle{\prod_{i=0}^{m-1}}c_j=1$,
where $c_j$ are the complex nonvanishing constants
satisfying
$\sta a^*\tau_{j+1}=c_j \tau_j$ for all $j\in \ZZ/m$,
and $\tau_j$
is the parameter smoothing $\sta n_j$.
\end{enumerate}
By  \cite[p.33]{HM},
if $\level$ is $(\star)$-smoothable, then
the data $\sta a \in \aut\level$
can be deformed to $\sta a'\in \aut(\leveli',\levelii', \leveliii')$
in such a way that the $m$ nodes above are smoothed
and the age of the action on $\defo/\qr$ is preserved.
In
\cite[Prop.3.6]{lu2007}, Ludwig proves a generalisation applying to
moduli of roots of any line bundle; in particular we can use this fact for  level-$\ell$ curves.
Hence, by iterating such deformations, within the locus of noncanonical singular points in $\rr_{g,\ell}$,
we can smooth any $(\star)$-smoothable curve to a curve which is no more $(\star)$-smoothable; we refer to
this condition as \emph{$(\star)$-rigidity}.
The loci of T-curves and of J-curves are closed:
in the above deformation,
if $(\leveli',\levelii', \leveliii')$ is a J-curve (a T-curve), then
$\level$ is a J-curve (a T-curve). 
Proposition \ref{pro:onlyifinanutshell}
can be shown under the following assumption.
\begin{assumption}\label{assum:nonsmoothability}
In the proof of Proposition \ref{pro:onlyifinanutshell} we assume that
$\level$ is $(\star)$-rigid.
\end{assumption}

Preliminary 2. Set $\mathfrak{ord}(\sta a)=\ord(a)$; this is also
the least integer
for which $\sta a^m$ is a ghost and is a divisor of $\ord(\sta a)$. We can provide
lower bounds for the age of $\sta a$.
\begin{lemma}\label{lem:age1/m}
Consider a  level-$\ell$ curve $\level$.
 \begin{itemize}\item[0.]
  For any automorphism $\sta a\in \aut\level$,
 we have 
  $\age(\sta a)\ge  {(\#E-N)}/{2},$
 where $N$ is the number of cycles of the permutation of $E$ induced by $\sta a$.
 \end{itemize}
 We
can  improve the lower bound in the following situations.
\begin{itemize}
\item[1.] Assume that $\level$ is a
noncanonical $(\star)$-rigid singularity in $\rr_{g,\ell}$; then, for any subcurve
$\sta Z$  such that $\sta a(\sta Z)=\sta Z$ and for any length-$k$ cycle of the induced permutation of
$\Sing_{\mathrm {nonQR}}(\ssC)\cap \Sing(\sta Z)$,
we have
$\age(\sta a)\ge {k}/{\ord(\sta a\!\mid_{\sta Z})}+ ({\#E-N})/{2}.$
\item[2.] If $\sta a^{\mathfrak{ord}(\sta a)}$ is a senior ghost, then we have
$\age(\sta a)\ge 1/{\mathfrak{ord}(\sta a)}+ ({\#E-N})/{2}$.
\end{itemize}
\end{lemma}
\begin{proof}
We can express the action of $\sta a$ on $\bigoplus_{e\in E}\mathbb A^1_{\ol \tau_e}$
(see \S\ref{subsubsect:modqr} for the notation $\ol \tau_e$)
in terms of a block-diagonal matrix
$H$ whose blocks $H_1,\dots, H_N$ are of the form
$$H_i=D_iP_i=\diag((\xi_R)^{q^{(i)}_0} ,\dots,
(\xi_R)^{q^{(i)}_{n_i-1}})P_i,$$
where $P_i$ is the permutation matrix attached to the
cycle permutation operating on $\ZZ/n_i$ as $\sigma(j)=(j+1)$,
$R$ is a suitable positive integer and the exponents 
$q^{(i)}_j$ are contained in $[0,R-1]$.
Note that $n_i$ divides the order of the permutation of $E$ induced by
$\sta a$.
Then, (see also \cite[Prop.~3.7]{lu2007}), since the characteristic polynomial of
$H_i$ is $x^{n_i}- \det D_i$, we have 
$$\age(\sta a)\ge \sum_{i=1}^N \left ( \left \{ \sum_{j=0}^{n_i-1} \frac{q_j^{(i)}}{R}\right \} +\frac{n_i-1}{2}\right)=
\sum_{i=1}^N   \left \{ \sum_{j=0}^{n_i-1} \frac{q_j^{(i)}}{R}\right\} 
+\frac {\#E-N}{2},$$
where the right hand side is of the form $A+\frac {\#E-N}{2}$ with $A\ge 0$.

In claim (1), there is a 
$k\times k$-block $H=H_{i_0}$ with
$D=D_{i_0}=
\diag((\xi_R)^{q_0} ,\dots,
(\xi_R)^{q_{k-1}})$,
 $H^k=(\xi_R)^{q}\mathbb I$, and
$\frac qR= \{ \sum_{j=0}^{k-1} \frac{q_j}R\}\neq 0$ (see condition (b) defining
$(\star)$-smoothability).
Since, for $w=\ord(\sta a\!\mid_{\sta Z})$ we have  $H^w=\id$,
we have $\frac{w}{k}\frac{q}R\in \ZZ$; hence, we have $A\ge q/R\ge k/w$ as  required.

In case (2) we are assuming that $\sta a^m$ is senior for $m=\mathfrak{ord}(\sta a)$. Notice that 
$m/n_i$ is integer for all $i$. 
We want to show $A\ge 1/m$. Assume $A< 1/m$; 
then, for all $i$, we preliminarily 
notice 
\begin{equation}\label{eq:after<1/m}\left \{
\frac{m}{n_i}\sum_{j=0}^{n_i-1} \frac{q_{j}^{(i)}}R \right \}=\frac{m}{n_i}\left
\{ \sum_{j=0}^{n_i-1} \frac{q_j^{(i)}}R\right \}.\end{equation}
This happens because we have 
$$\frac{m}{n_i}\left \{ \sum_{j=0}^{n_i-1} \frac{q_j^{(i)}}R\right\}\le
\sum_{i=1}^N m\left \{ \sum_{j=0}^{n_i-1} \frac{q_j^{(i)}}R\right\}=mA<1.$$
On the other hand $\sta a^m$ is a ghost automorphism and operates on
$\bigoplus_{e\in E}\mathbb A^1_{\ol\tau_e}$ as the diagonal matrix $H^m$
with $n_i$ eigenvalues equal to $(\det D_i)^{m/n_i}$ for $i=1,\dots,N$; using \eqref{eq:after<1/m}, we get
$$\age(\sta a^m)=\age(H^m)=\sum_{i=1}^N n_i \left \{
\sum_{j=0}^{n_i-1} \frac{m}{n_i} \frac{q_{j}^{(i)}}R \right \} {=}\sum_{i=1}^N m\left \{ \sum_{j=0}^{n_i-1} 
\frac{q_j^{(i)}}R\right \}<1.$$
contradicting the assumption that $\sta a^m$ is senior.
\end{proof}

Step 1: the automorphism $\sta a$ fixes all nodes except, possibly,  from a single transposition of two nodes.
Indeed, by \cite[p.34]{HM} (embodied in the first part of Lemma \ref{lem:age1/m}),
each node transposition contributes $1/2$ to $\age(\sta a)$.

Step 2: for each irreducible component $\sta Z$ we have $\sta a(\sta Z)=\sta Z$.
Harris and Mumford's argument excludes\footnote{The space 
parametrizing the deformations of a 
hypothetical component $\sta Z$ for which
$\sta a(\sta Z)\neq \sta Z$ (alongside with its special points) 
should have dimension $d=0$ or $1$ 
and in this second case we must have $\sta a(\sta a(\sta Z))=\sta Z$,
\emph{i.e.} the cycle of irreducible components obtained by 
applying the automorphism $\sta a$ iteratively starting from $\sta Z$
must have length $m=2$ (indeed, via
an age estimate analogue to  
Lemma \ref{lem:age1/m}, one can prove $\age\ge d(m-1)/2$). 
The case $d=1$ 
corresponding to (c), (d) and (e) in
\cite[p.~35]{HM} is ruled out by the authors as well as 
the case named (b) where
$\sta Z$ is a singular elliptic tail, because it 
yields $g(\ssC)\le 3$.} 
the condition $\sta a(\sta Z)\neq \sta Z$ 
apart from one situation 
which we now state precisely.
  \begin{figure}[h]
  \begin{picture}(200,25)(-35,15)
   \qbezier[30](30,20),(40,27),(55,50)   
     \qbezier[30](71,40),(70,12),(95,10)
      \qbezier[30](108,38),(103,12),(80,10)
   \put(72,28){\circle*{3}}
    \put(52,18){\small{$\sta Z$}}
   \put(41,31){\circle*{3}}
 \put(103,25){\circle*{3}}
 \qbezier(33,32),(73,28),(113,24)
 \end{picture}
 \end{figure}
Case (a) of p.35 in \cite{HM}
concerns a smooth, rational,
irreducible component $\sta Z$  meeting the rest of the curve at
three special points; in the present setup
we should of course allow nontrivial stabilisers at these three points.
Then \cite{HM} relies on the following claim
in the special case
 $\sta a=a\in \aut(C)$. We state a generalised version, which is due to Ludwig, see
\cite[{end of Proof of Prop.~3.8}]{lu2007}.
\begin{lemma}\label{lem:LudwigXlemma}
Assume the coarsening of $\sta a\in \aut\level$ operates locally at a scheme-theoretic
 node $\Spec\CC[x,y]/(xy)$ of $C$ as
$(x,y)\mapsto (y,x)$.
Then  $\sta a$ fixes the parameter smoothing the node in $\defo(C)$ and operates
on the parameter $\tau$ smoothing the node in $\defo\level$ as
either $\tau\mapsto \tau$ or $\tau\mapsto -\tau$.
In the first case the curve is not ($\star$)-rigid.
\qed
\end{lemma}
It is worthwhile to sketch the proof since
Ludwig uses the different setup of quasistable curves
(which is equivalent in this case).
If $\sta a=a$ we have $xy=t=\tau=\tau$, hence
$\tau\mapsto\tau$.
Otherwise
note that the multiplicity at the oriented edge $e$ 
corresponding to the above node
satisfies
$M(e)=-M(\ol e)=-M(e)$; hence
$M(e)=\ell/2$ and the action on $\tau_e$ is $\tau_e\mapsto \tau_e$
or $\tau_e\mapsto -\tau_e$. 

Now, let us assume $\sta a(\sta Z)\neq \sta Z$ 
and apply the fact that $\sta a$ is junior and that 
$\level$ is $(\star)$-rigid.
The three special points of $\sta Z$ are nodes of $\ssC$. 
If they are fixed they have two branches, one in $\sta Z$ and one 
in $\sta a(\sta Z)$. Since the coarsening $Z$ of $\sta Z$ is a projective line
these fixed nodes satisfy the condition of the above lemma and,
by ($\star$)-rigidity, yield age contribution $1/2$.
Recall that each non-fixed node also contributes $1/2$. The age is at least $1$ (with one pair of nodes exchanged
and the remaining node is fixed). 
So, the argument of \cite{HM} holds true:
$\sta a(\sta Z)\neq \sta Z$ is ruled out.

Step 3: classification of the
irreducible components. For any irreducible component
$\sta Z$ of $\sta C$ let us set up the notation for the rest of the proof.
We write $\sta N\to \sta Z$ for its normalisation,
 $\sta D\subset \sta N$ for the divisor representing
special points lifting the nodes of $\sta C$,
$\sta r$ for the restriction $\sta a\!\mid _{\sta Z}$,
and $\sta r_{\sta N}\in \aut(\sta N)$ for the lift to $\sta N$.
Coarsening yields $N\to Z$, $D\subset N$,
$r\in\aut(Z)$
and $r_N\in \aut(N)$.
Since all components are fixed, we establish a list of possible cases
by recalling the classification
\cite[Prop. p.28]{HM} of nontrivial automorphisms $r_N$
of a smooth scheme-theoretic curve $N$ paired with a divisor $D\subset N$
operating on $H^1(N,T(-D))$ with age less then $1$:
\begin{enumerate}[(I)]
\item $N$ rational with $r_N\colon z\mapsto \xi_n z$ for $n=2,4$,
\item $N$ elliptic with $r_N$ of order $2,3,4,$ or $6$,
\item $N$ hyperelliptic of genus $2$ or $3$ with $r_N$ the hyperelliptic involution,
\item $N$ of genus-$2$ with an involution $r_N$ such that $N/\langle r_N \rangle$ is an elliptic curve.
\end{enumerate}

Step 4.  Classification of the irreducible components
$\sta Z$ satisfying the following extra condition: 
$\sta a$ fixes all nodes of $\sta Z\cap \Sing (\ssC)$.
We keep the above notation 
$\sta N, \sta D,  \sta r, \sta r_{\sta N}, N, D, r, r_{N}$.

First, case (I) does not occur. Indeed, we argue 
 as in \cite[case (b), p.37]{HM}: since the nodes of $\sta Z\cap \Sing(\ssC)$ 
 are fixed, the special points of $\sta N$ are either fixed or form orbits 
 of $2$ points with
 respect to $\sta r_{\sta N}$.
We deduce that
$r_N$ necessarily operates on the coarse space as $z\mapsto \xi_2 z$.
Then, using the stability condition,
it is easy to show that there is
at least one pair of points with opposite coordinate on $N$ mapping to a node $n$ of $Z$.
By Lemma \ref{lem:LudwigXlemma} and $(\star)$-rigidity this
yields age contribution $1/2$ and the nontriviality of the stabiliser over $n$;
we 
deduce (using $\age(a)<1$) that there is exactly  one node of
$Z$ whose preimages in $N$ are interchanged by $r_N$.
The remaining nodes lying in $Z$ are contained in the images of the two
fixed points of $z\mapsto \xi_2 z$. Note that there cannot be 
two such nodes, otherwise the 
action on $H^1(N,T (-D ))$ 
gives extra contribution of at least $1/2$, 
because the order-$2$ 
automorphism does not deform to the general 
four-pointed rational curve.
Therefore, the only possibility is that
$\sta Z$ is a stack-theoretic genus-$1$ tail as in
Definition \ref{defn:elltail}.
Since $\sta a$ operates by changing
the sign of the parameter deforming the elliptic tail, we are necessarily
in the situation (iv) of Proposition \ref{pro:autelltail} and, by Remark \ref{rem:explicitautomstackytail_3},
we have $\age(\sta a)\ge 1$, a contradiction.

By a simple age computation\footnote{The dimension of the 
$(-1)$-eigenspace of $r_N$ on $H^1(N,T(P))$ 
(where $P$ is a fixed point of $r_N$)
is $2$.}
\cite[p.39, case (e)]{HM}
rules out, without changes,
the genus-$2$ curve of case (IV). 

Second, case (II) occurs only if $r_N$
fixes at least one point.
Assume, by way of contradiction, that $r_N$ is a
nontrivial translation $z\mapsto z+t_0$.
Since the
translation does not allow fixed points it
should allow two-points orbits.
In this way $r_N$ is a translation of order-$2$.
This implies that $\sta C=\sta Z$; \emph{i.e.} $\sta C$ is irreducible.
Since $g(\sta C)\ge 4$, then there are at least three nodes satisfying the conditions
of Lemma \ref{lem:LudwigXlemma}. Applying $(\star)$-rigidity
we get age contribution $3/2$ and we can conclude as in \cite{HM}
that $r_N$ fixes at least  one point; then we can use
Harris and Mumford's list of cases ``(c2)-(c5)'' at \cite[p.~37-39]{HM}
specifying the configuration of the elliptic component
and their age contribution. We summarise this in (i) and (ii), below.

We can reproduce Harris and Mumford's list of possible irreducible
components $\sta Z$ for which the restriction
$\sta r=\sta a\!\mid_{\sta Z}$ does not satisfy $r=\id_Z$ and fixes all points of $\sta Z\cap \Sing(\sta C)$.
\begin{enumerate}[(i)]
\item $\sta Z$ is a scheme-theoretic elliptic tail
$\sta r$
is the ETI (age contribution $0$) or an automorphism of
order $3,4$ or $6$ of a smooth elliptic tail $\sta Z=Z$ meeting the rest of the curve at $n$ acting on $H^1(Z, T(n))$
with age $1/3,1/2$ or $1/3$ (see figures at pages 38 and 39 in \cite{HM}).
 \begin{figure}[h]
  \begin{picture}(400,10)(-35,25)
   \qbezier[30](30,20),(40,25),(42,45)   
       \put(52,18){\small{$g(\sta Z)=1$}}
       \put(32,35){\small{$n$}}
       \put(132,28){\small{$\ord=3,4,6,$}}
\put(232,28){\small{ \text{age contribution }$=\frac13,\frac12,\frac13$.}}
   \put(39,31){\circle*{3}}
  \qbezier(33,32),(63,25),(93,31)
 \end{picture}
 \end{figure}
\item $\sta Z$ is a smooth genus-$1$ component ($\sta Z=Z$) meeting the rest of the curve at two points $\sta p$ and $\sta q$;
the action on $H^1(Z,T(p+q))$ has order $2$ or $4$ and age 
$1/2$ or $3/4$  (see figures at pages 38 and 39 in \cite{HM}).
\begin{figure}[h]
  \begin{picture}(400,10)(-35,25)
   \qbezier[15](39,35),(39,26),(35,20)   
       \put(42,18){\small{$g(\sta Z)=1$}}
            \put(42,35){\small{$\sta p$}}
              \put(92,35){\small{$\sta q$}}
       \put(132,28){\small{$\ord=2,4,$}}
       \put(232,28){\small{ \text{age contribution }$=\frac12,\frac34$.}}
   \put(39,31){\circle*{3}}
     \put(90,30){\circle*{3}}
  \qbezier[15](90,40),(87,30),(94,25) 
  \qbezier(33,32),(63,25),(98,31)
 \end{picture}
 \end{figure}
\item $\sta Z$ is an hyperelliptic tail 
of genus $g=2$; the restriction $\sta r$ is the hyperelliptic involution
contributing $1/2$ to $\age(\sta a)$ (see case (d) of \cite[p.~39]{HM}).
 \begin{figure}[h]
  \begin{picture}(400,10)(-35,25)
   \qbezier[30](30,20),(40,25),(40,42)   
       \put(52,18){\small{$g(\sta Z)=2$}}
       \put(132,28){\small{$\ord=2,$}}
\put(232,28){\small{ \text{age contribution }$=\frac12$.}}
   \put(39,31){\circle*{3}}
  \qbezier(33,32),(63,25),(103,31)
 \end{picture}
 \end{figure}
\end{enumerate}

Step 5. We now argue that $\sta a \in \aut(\ssC)$ fixes all nodes.
To this effect, by Step 1, we need to rule out the cases where $\sta a$ transposes a pair of nodes ($\sta n_1$,$\sta n_2$).
Since the node transposition contributes $1/2$ to $\age (\sta a)$ we can exclude
the presence in the curve of components of the form (ii) and (iii).
We can assume that $\sta a$ operates as the identity on the elliptic tails (i).
If this is not the case, we can simply modify $\sta a$ by
restricting to
$\sta B=\ol {\ssC\setminus \{\text{elliptic tails}\}}$
and by trivially extending
to $\ssC$; the resulting automorphism has lower age but it is still
nontrivial because it exchanges two nodes; hence it is a
nontrivial junior automorphism, which we will refer to it as $\sta a$ in this step.
 
 We now see that $\sta n_2=\sta a(\sta n_1)$ yields a contradiction; since 
all irreducible components are (globally) fixed by Step 2, 
we reduce to the following cases.

 \begin{figure}[hhh]
  \begin{picture}(400,60)(-35,-0)
   \qbezier(35,35),(35,32),(15,10) 
  \qbezier(35,35),(35,42),(30,42)  
   \qbezier(25,35),(25,42),(30,42)  
   \qbezier(25,35),(25,26),(55,26)     
   \qbezier(55,26),(85,26),(85,40)   
   \qbezier(85,40),(85,50),(78,50)  
   \qbezier(78,50),(71,50),(71,40) 
   \qbezier(71,40),(70,12),(95,10)
   \put(72,28){\circle*{3}}
    \put(31,29){\circle*{3}}
       \put(48,-10){\small{\text{case (a)}}}
       \put(55,12){\small{$\sta Z$}}
                  \put(32,20){\small{$\sta n_1$}}
          \put(82,20){\small{$\sta n_2=\sta a(\sta n_1)$}}
       \put(248,-10){\small{\text{case (b)}}}
       \put(262,12){\small{$\sta Z$}}
           \put(259,43){\small{$\sta N$}}
       \put(232,40){\small{$\sta n_1$}}
          \put(282,40){\small{$\sta n_2=\sta a(\sta n_1)$}}
    \put(237,31){\circle*{3}}    
     \put(288.5,31){\circle*{3}}    
  \qbezier(233,28),(263,50),(293,28) 
  \qbezier(233,34),(263,12),(293,34) 
 \end{picture}
 \end{figure}

 \begin{enumerate}[(a)]
  \item  All the branches of $\sta n_1$ and of $\sta n_2=\sta a(\sta n_1)$ lie in the  same irreducible component $\sta Z$.
  Then, Lemma \ref{lem:age1/m} (1),
  yields age contribution
  $2/n+1/2$, where $n=\ord (\sta r)$ and fits
in the conditions required by \ref{lem:age1/m}.
We observe that $\ord(\sta r)=\ord(r)$
because every ghost is the identity on the irreducible components of $\sta C$ (see Remark \ref{rem:fixingirredcompl}).
The age contribution  coincides with that used in \cite[p~36-37]{HM} in order to rule out this case.
\item  There is a component $\sta Z$ containing exactly one branch for each node $\sta n_1$ and $\sta n_2$.
Then let $\sta H$ be the second component through  $\sta n_1$ and $\sta n_2$.
Notice that $\sta a^{\mathfrak{ord}(\sta a)}$
is either the identity or a senior ghost 
because $\level$ is not a J-curve. 
Then, by 
Lemma \ref{lem:age1/m} (1-2), the age of $\sta a $ is
at least $1/n+1/2$ where $n$ is the order of the coarsening of
$\sta a\!\mid_{\sta Z\cup \sta H}$.
According to the list of cases (I-IV), $n$ can be $2,4,6,$ or $12$.
Since the lower bound $1/n+1/2$ is smaller than the lower bound $2/n+1/2$ found
in \cite[p.~36-37]{HM} we can only conclude for $n=2$.
In particular we should study more carefully the case $n=4$ where no 
extra argument was needed in \cite{HM}.
The same issue arises in \cite[Proof of Prop.3.10]{lu2007}, 
where Ludwig notices that, when $n$ equals $4$, there is  
extra age contribution of  $1/4$.
Indeed,
either $Z$ or $H$ is an elliptic curve
on which the coarsening $a$ operates, locally at a point $p\neq n_1,n_2$, as $z\mapsto \xi_4z$.
This yields extra age contribution $1/4$. 
(Ludwig also checks that the arguments of Harris and Mumford allow 
to conclude for $n=6$ and $12$ 
because of the respective 
extra age contributions $1/3$ and $1/2$ that they find 
in these two cases. The
argument fits equally well here.)
\end{enumerate}

Step 6. We are left with the problem of patching together the few curves of genus $1$ and $2$ listed in
 (i), (ii), and  (iii) with lots of identity components; \emph{i.e.} components
where the coarsening of $\sta a$ restricts to the identity.
We do it by following  \cite[p.39]{HM}, see also \cite[Propositions 3.12-15]{lu2007}.
In case there is a component of type (iii),
the second component $\sta H$ through the node separating $\sta Z$ from the rest of the curve cannot be
of type (iii) (each component of type (iii) adds $1/2$ to the $\age(\sta a)$).
By the same argument we should rule out $\sta H$ of type (ii).
On the other hand $\sta H$ cannot be an elliptic tail because $g(\ssC)\ge  4$.
Finally $\sta H$ cannot be an identity component because, this yields
a $1/2$-age contribution due to the parameter smoothing the node $\sta H\cap \sta Z$.
As a consequence, case (iii) is impossible.

Let us assume that there is a component $\sta Z$ of type (ii), that is a so-called elliptic ladder.
Since such components contribute at least
$1/2$ to $\age(\sta a)$ we assume there is exactly one such case.
We argue as in Step 5 where we have replaced $\sta a$ by another junior automorphism
operating as the identity on the elliptic tails.
In this way, we have $n=\mathfrak{ord}(\sta r)=\mathfrak{ord}(\sta a)$.
If $\sta a^{\mathfrak{ord}(\sta a)}$ is a nontrivial ghost, then
it is senior, because $\level$ is not a J-curve; by Lemma
\ref{lem:age1/m},(2) we have $\age(\sta a)\ge 1/n$.
The same inequality holds, by Lemma \ref{lem:age1/m} (1), if $\sta a^{\mathfrak{ord}(\sta a)}$ is trivial, \emph{i.e.}
if $\mathfrak{ord}(\sta a)=\ord(\sta a)$ (since $g\ge 4$
there is at least one fixed node in $\ssC\setminus \{\text{elliptic tails}\}$).
Now, for
$n=2$, the total age contribution is $1/2+1/2$
and, for $n=4$, the total age contribution if $3/4+1/4$. We may rule out this case.

Now the coarsening of $\sta a$ is the identity on all components that are not elliptic tails.
In fact $\sta a$ is actually the identity on all such components;
if this were not the case, we could replace $\sta a$ by a junior ghost automorphism of $\level$ contradicting
the assumption that $\level$ is not a J-curve.
So, $\sta a$ is the identity everywhere
except for some scheme-theoretic elliptic tails. We can now go through
the study of elliptic tails (i) and add the age contribution 
from the parameter smoothing the QR node where 
the tail meets the rest of the curve. As in \cite{HM} and \cite{lu2007} we conclude that
$\sta a$ has order $6$ and should operate on the elliptic tail
precisely as prescribed by the
statement of Proposition \ref{pro:onlyifinanutshell}.
\end{proof}

By definition, noncanonical singularities are local obstructions to
the extension of pluricanonical forms.
On the other hand Harris and Mumford show that noncanonical singularities at
T-curves do not pose a global obstruction:
pluricanonical forms extend across the locus $\sta T$ of level curves of type T
as soon as they are
globally defined off of $\sta T$. Their statement can immediately
adapted to level curves (the argument is spelled out in \cite[Thm.~6.1]{FaLu} and \cite[Thm~4.1]{lu2007}
and relies on the fact that the morphism forgetting the level structure is not ramified along
$\delta_{g-1}$). The precise statement is as follows.
\begin{corollary}\label{cor:globalpluricanonicalformsextend}
We fix $g\geq 4$ and $5\neq\ell\le 6$. Let
$\widehat{\cR}_{g,\ell}\rightarrow\rr_{g,\ell}$
be any desingularisation.
Then every pluricanonical form defined on
the smooth locus $(\rr_{g,\ell})^{\mathrm{reg}}$ of $\rr_{g,\ell}$ extends
holomorphically to $\widehat{\cR}_{g,\ell}$, that is,  for all
integers $q\geq 0$ we have isomorphisms
\[
\Gamma \bigl((\rr_{g,\ell})^{\mathrm{reg}},K_{\rr_{g,\ell}}^{\otimes q}\bigr) \cong
\Gamma\bigl(\widehat{\cR}_{g,\ell},
K_{\widehat{\cR}_{g,\ell}}^{\otimes q} \bigr).
\]\qed
\end{corollary}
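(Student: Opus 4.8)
The plan is to localise the question at the noncanonical locus of $\rr_{g,\ell}$, to observe that this locus is nothing but the codimension-$2$ locus $\sta T$ of T-curves, and then to reduce the extension problem across $\sta T$ to Harris and Mumford's analysis on $\ol{\cM}_g$. First I would combine the two main results of this section. By the \rsbt{} criterion in the form of Corollary~\ref{cor:rsbt_for_us}, together with Theorem~\ref{thm:noncanonical}, the noncanonical locus of $\rr_{g,\ell}$ is the union of the loci of T-curves and of J-curves; and by the No-Junior-Ghosts Theorem~\ref{thm:no_junior_ghosts}, for $g\ge 4$ and $5\neq\ell\le 6$ there are no J-curves. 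Hence $\Sing_{\mathrm{nc}}(\rr_{g,\ell})=\sta T$, which by Definition~\ref{defn:Tcurve} lies inside the boundary divisor $\delta_{g-1}$ and has codimension $2$ in $\rr_{g,\ell}$. Since, by the very definition of a canonical singularity, every pluricanonical form on the smooth locus of a variety with canonical singularities extends holomorphically to any resolution of a neighbourhood of that singularity, the isomorphism \eqref{eq:extensionofpcf} already holds over $\rr_{g,\ell}\setminus\sta T$; patching these local extensions, it remains only to extend pluricanonical forms across a neighbourhood of $\sta T$.

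Next I would record the local model at a T-curve $\level$. In the local picture $\defo\level/\aut\level$ of $\rr_{g,\ell}$, first mod out the subgroup $\qr$ generated by quasireflections: by Chevalley--Shephard--Todd this produces the smooth affine quotient $\defo\level/\qr\cong\mathbb A^{3g-3}$ and changes neither the singularity nor the pluricanonical forms computed downstairs (cf. Fact~\ref{fact:singularities}), leaving the action, without quasireflections, of $\aut\level/\qr$ on $\mathbb A^{3g-3}$. As in \S\ref{subsubsect:modqr} and the proof of Proposition~\ref{pro:onlyifinanutshell}, the order-$6$ tail automorphism $\sta a_{1/6}$ descends to a subgroup $\ZZ/3$ acting by $\diag(\xi_3,\xi_3,1,\dots,1)$ on $\mathbb A^1_{\ol\tau_1}\oplus\mathbb A^1_{\ol\tau_2}\oplus\mathbb A^{3g-5}$. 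The transversal slice is therefore the cyclic quotient $\mathbb A^2/\pmmu_3$ of type $\frac13(1,1)$: it is rational but genuinely noncanonical, and on its minimal resolution $Y$, whose single exceptional curve $E\cong\mathbb P^1$ has self-intersection $-3$ and discrepancy $-\frac13$, a $q$-canonical form pulled back from the quotient is a priori only a section of $\mathcal O(qK_Y+\lfloor q/3\rfloor E)$, i.e. may acquire a pole of order up to $\lfloor q/3\rfloor$ along $E$. So \emph{local} extension genuinely fails and a global argument is indispensable.

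The global input is the \'etaleness of the forgetful morphism $\ssf\colon\rr_{g,\ell}\to\ol{\cM}_g$ along $\delta_{g-1}$, recorded in \S\ref{subsubsect:seplcomp}. Near a T-curve $\ssf$ is \'etale, so a neighbourhood of $\sta T$ in $\rr_{g,\ell}$ is, \'etale-locally and compatibly with the canonical bundle, a neighbourhood of $\ssf(\level)\in T_1\subset\ol{\cM}_g$; since holomorphic extension across an exceptional locus is an \'etale-local property on a resolution, extension across a neighbourhood of $\sta T$ follows verbatim from Harris and Mumford's theorem that every pluricanonical form on $(\cM_g)^{\mathrm{reg}}$ extends across $T_1$. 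I expect this last reduction --- or rather, the Harris--Mumford input it invokes --- to be the main obstacle. What must be shown is that the a priori pole of order $\le\lfloor q/3\rfloor$ along the exceptional divisor over $\sta T$ is in fact absent; since the generic point of $\delta_{g-1}$ lies in the smooth locus of $\rr_{g,\ell}$ (the elliptic-tail involution there being a quasireflection), the form is regular in codimension one along $\delta_{g-1}$, and the argument --- carried out in \cite{HM} for $\ell=1$ and adapted to roots of line bundles in \cite[Thm.~4.1]{lu2007} and \cite[Thm.~6.1]{FaLu} --- is that a nonzero such pole would propagate to a genuine pole of the form along $\delta_{g-1}\supseteq\sta T$, a contradiction. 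I would follow that adaptation, checking only that the two facts established above, namely the codimension-$2$ inclusion $\sta T\subset\delta_{g-1}$ and the \'etaleness of $\ssf$ along $\delta_{g-1}$, are exactly what is needed to carry the argument over to all $g\ge 4$ and $5\neq\ell\le 6$.
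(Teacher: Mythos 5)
Your proposal is correct and follows the paper's route: by Theorems~\ref{thm:noncanonical} and~\ref{thm:no_junior_ghosts} the noncanonical locus of $\rr_{g,\ell}$ reduces, for $g\ge 4$ and $5\neq\ell\le 6$, to the codimension-$2$ locus $\sta T$ of T-curves, across which one extends pluricanonical forms by the Harris--Mumford argument as adapted to level structures in \cite[Thm.~4.1]{lu2007} and \cite[Thm.~6.1]{FaLu}, using exactly the non-ramification of $\sta f$ along $\delta_{g-1}$ that you single out. One caution: your intermediate claim that extension across $\sta T$ ``follows verbatim'' from Harris--Mumford by \'etale-locality overstates what is true --- their result is a global statement about forms on $(\ol{\cM}_g)^{\mathrm{reg}}$, not a local extension property (as your own $\frac{1}{3}(1,1)$ computation shows, local extension fails), and the coarse map $f\colon\rr_{g,\ell}\to\ol{\cM}_g$ need not be \'etale even where the stack map $\sta f$ is unramified --- but your final paragraph recasts the argument in the correct global form, which is precisely what the cited references carry out.
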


\end{document}